\tikzset{>=latex}
\theoremstyle{plain}
\newtheorem{thm}{Theorem}
\newtheorem{lemma}{Lemma}
\newtheorem{cor}{Corollary}
\newtheorem{prop}{Proposition}
\theoremstyle{definition} \theoremstyle{definition}
\newtheorem{remark}{Remark}
\newtheorem{example}{Example}
\newtheorem{defn}{Definition}
\theoremstyle{remark}
\newcommand{\PP}{\mathbb{P}}
\newcommand{\G}{\textsc{\G}}
\newcommand{\g}{\mathfrak{g}}
\newcommand{\gk}{\mathfrak{k}}
\newcommand{\A}{\mathbb{A}}
\newcommand{\Q}{\mathbb{Q}}
\newcommand{\p}{\mathfrak{p}}
\newcommand{\U}{\mathcal{U}}
\newcommand{\Z}{\mathbb{Z}}
\newcommand{\R}{\mathbb{R}}
\newcommand{\Gm}{\mathbb{G}_m}
\newcommand{\h}{\mathfrak{h}}
\newcommand{\C}{\mathbb{C}}
\newcommand{\Si}{\mathbb{S}}
\newcommand{\wM}{\widehat{\sM}}
\newcommand{\wL}{\widehat{\sL}}
\newcommand{\wP}{\widehat{\sP}}
\newcommand{\wG}{\widehat{\sG}}
\newcommand{\wT}{\widehat{\sT}}
\newcommand{\wZ}{\widehat{\sZ}}
\newcommand{\wB}{\widehat{\sB}}
\newcommand{\qH}{\mathbb {H}}
\newcommand{\Hom}{{\rm Hom}}
\newcommand{\Ext}{{\rm Ext}}
\newcommand{\lrta}{\longrightarrow}
\def\Uk{{\mathfrak{U}}}
\def\Zk{{\mathfrak{Z}}}
\def\g{{\mathfrak{g}}}
\def\gl{{\mathfrak{g}{\ell}}}
\def\h{{\mathfrak{h}}}
\def\b{{\mathfrak{b}}}
\def\a{{\mathfrak{a}}}
\def\n{{\mathfrak{n}}}
\def\q{{\mathfrak{q}}}
\def\l{{\mathfrak{l}}}
\def\u{{\mathfrak{u}}}
\def\t{{\mathfrak{t}}}
\def\b{{\mathfrak{b}}}
\def\k{{\mathfrak{k}}}
\def\G{{\rm G}}
\def\T{{\rm T}}
\def\SL{{\rm SL}}
\def\Spin{{\rm Spin}}
\def\Sp{{\rm Sp}}
\def\SU{{\rm SU}}
\def\U{{\rm U}}
\def\sG{{\mathsf{G}}}
\def\sQ{{\mathsf{Q}}}
\def\sZ{{\mathsf{Z}}}
\def\sH{{\mathsf{H}}}
\def\sZ{{\mathsf{Z}}}
\def\sB{{\mathsf{B}}}
\def\sP{{\mathsf{P}}}
\def\sA{{\mathsf{A}}}
\def\sT{{\mathsf{T}}}
\def\sL{{\mathsf{L}}}
\def\sM{{\mathsf{M}}}
\def\GL{{\rm GL}}
\def\PGL{{\rm PGL}}
\def\Gal{{\rm Gal}}
\def\SO{{\rm SO}}
\def\OO{{\rm O}}
\def\Out{{\rm Out}}
\def\Sym{{\rm Sym}}
\begin{document}

\subjclass{Primary 11F70; Secondary 22E55}

\title{Cohomological representations for real reductive groups}
\author[Arvind Nair and Dipendra Prasad]{Arvind N. Nair and Dipendra Prasad}

\begin{abstract} 
For a connected reductive group $\sG$ over $\R$, we study cohomological $A$-parameters, which are Arthur parameters with the infinitesimal character of a finite-dimensional representation of $\sG(\C)$.  We prove a structure theorem for such $A$-parameters, and deduce from it that a morphism of $L$-groups which takes a regular unipotent element to a regular unipotent element respects cohomological $A$-parameters. This is used to give complete understanding of cohomological $A$-parameters for all classical groups.  We review the parametrization of Adams-Johnson packets of cohomological representations of $\sG(\R)$ by cohomological $A$-parameters and discuss various examples.  We prove that the sum of the ranks of cohomology groups  in a packet 
on any real group (and with any infinitesimal character)
is independent of the packet under consideration, and can be explicitly calculated.  
This result has a particularly nice form 
when summed over all pure inner forms. 
\end{abstract}

\maketitle
{\hfill \today}

\vspace{.5cm}

\tableofcontents

\section{Introduction}
Let $\mathsf{G}$ be a connected reductive algebraic group over $\R$, $G=\mathsf{G}(\R)$ with $\g_0$ its Lie algebra, and $\g = \g_0 \otimes _{\R}\C$. Let
$K$ be a maximal compact subgroup of $\sG(\R)$ with $K_0$ the connected component of $K$ containing the identity,  $\gk_0$ its Lie algebra, and $\gk = \gk_0 \otimes _{\R}\C$. For any 
continuous representation $\pi$ of $\sG(\R)$, let $\pi$ also denote the associated $(\g,K)$-module. For such a $(\g,K)$-module $\pi$, and   a finite
dimensional representation $V$ of $G$, one is often interested in the cohomology groups $H^i(\g,K_0, \pi \otimes V) = H^i(\g,\gk, \pi \otimes V)$.
The irreducible unitary representations $\pi$ of $(\g,K)$
for which there is a finite-dimensional algebraic representation $V$ of $\mathsf{G}(\C)$  with $H^i(\g,\gk, \pi \otimes V) \not = \{0\}$ for some $i$ are called cohomological representations of $G$ with coefficients in  $V$; thus to emphasize, in this work, the real group $G$ will always be the  real points
of a connected reductive algebraic group over $\R$, and the coefficient system $V$ will always be the restriction
to $\sG(\R)$ of a finite dimensional algebraic representation of $\sG(\C)$. (This use of the term `cohomological representation
of $\sG(\R)$' excludes certain representations of $\sG(\R)$ which might be considered cohomological, such as the next-to-minimal
discrete series representation, call it $D_3$,  of $\GL_2(\R)$, normalized so that its central character is the sign character of $\R^\times$. Since the sign character of $\R^\times$ is not the restriction of an algebraic character of $\C^\times$, $D_3$ is not a
cohomological representation of $\GL_2(\R)$
in our usage of the word but the restriction of $D_3$ to $\SL_2(\R)$ is.)

The aim of this paper is  to study irreducible unitary cohomological  representations from the point of view of Langlands parameters and Arthur parameters 
(abbreviated to $L$-parameters and $A$-parameters). This 
study has been undertaken by several authors in the past,  starting with the fundamental work of Vogan-Zuckerman \cite{VZ} and Adams-Johnson \cite{AJ}, and reformulated and exposed, 
among others, by Arthur \cite{Ar}, Blasius-Rogawski \cite{BR}, and very recently by Taibi \cite{Ta}. Several of these works at various points make assumptions on the group $G$, such as it 
having a discrete series representation or being semisimple. This work makes no such assumption, and adds,  if rather minimally,  to clarifying the $A$-parameters 
associated with cohomological representations of real 
reductive groups.  The following definition is basic to this paper: 

\begin{defn}
Let $\sG$ be a real reductive algebraic group with a Cartan involution $\theta$ on $\sG(\R)$.  A  {\it cohomological $A$-parameter} for $\sG$ or $\sG(\R)$ is an $A$-parameter   $\phi: W_\R \times \SL_2(\C) \rightarrow {}^L\sG$ with the infinitesimal character
(defined in \S \ref{prelim}) of a
finite dimensional irreducible algebraic representation $V$ of $\G(\C)$ with $V\cong V^\theta$.
  \end{defn}

{ We prove in Theorem \ref{cohomo} a structure theorem
about such  cohomological $A$-parameters, and  construct
in Theorem \ref{AJ1} and \ref{AJ2}
(which is mostly a summary of the works of Vogan-Zuckerman \cite{VZ} and Adams-Johnson \cite{AJ})
a map from 
cohomological $A$-parameters to finite sets of cohomological unitary representations of $\sG(\R)$,
called  Adams-Johnson packets (or $AJ$-packets for short). Only recently, in the works of 
N. Arancibia, C. Moeglin and  D. Renard \cite{AMR}, has it been  proven that the Adams-Johnson packets are Arthur packets for quasi-split classical groups.
In this work we will always think of $A$-parameters as giving rise to $AJ$-packets of cohomological representations of $\sG(\R)$ and we will not concern
ourselves with the identification of  $AJ$- with  $A$-packets. In particular, as far as we know, even the $L$-packet underlying an $A$-parameter has not been shown
to be contained in the $AJ$-packet in any generality 
(except of course in \cite{AMR}), though it is expected for any reductive group. }

Eventually, the essence of our work is that although for general parameters, there is a considerable difference between $A$-parameters  $W_\C \times \SL_2(\C)\rightarrow {}^L\sG $, and
$A$-parameters  $W_\R \times \SL_2(\C)\rightarrow {}^L\sG $, for the purposes of cohomological representations, there is none (up to twisting representations of $\sG(\R)$ by a quadratic character). A simple example to keep in mind for this difference between parameters for $W_\C$ and $W_\R$ is already for $\SL_2(\R)$ where to define a unitary principal series, one must
define a character of $\R^\times$ and not just one of $\R^+$ which is what restriction to $W_\C$ of the corresponding $W_\R$ parameter will give, whereas for a discrete series representation of $\SL_2(\R)$, restriction to $W_\C$ uniquely determines
the parameter for $W_\R$. Our main result on cohomological $A$-parameters is Theorem \ref{cohomo}, which is a bit long to state precisely here, so we mention  that, 
roughly speaking, this theorem is about the existence and uniqueness of extensions of a  
parameter  $\phi_\C: W_\C \times \SL_2(\C) \rightarrow {}^L\sG$ to  parameters $\phi: W_\R \times \SL_2(\C) \rightarrow {}^L\sG$, assuming only that the infinitesimal character of $\phi_\C$ is that of a finite-dimensional representation of $\G(\C)$.

Our immediate reason to study $L$- and $A$-parameters of cohomological representations was to understand a basic question about cohomological representations:   How do cohomological representations behave 
under functoriality for morphism of $L$-groups $^L\sG_1 \rightarrow {}^L\sG_2$?
This natural question has not been paid much attention to since  functoriality usually 
does not take cohomological representations of $\mathsf{G}_1(\R)$ to cohomological representations of $\mathsf{G}_2(\R)$. For example, if we take the natural embedding of $L$-groups:
$$\iota: \GL_n(\C) \times \GL_m(\C) \hookrightarrow \GL_{m+n}(\C),
$$
one sees that the infinitesimal character of  the trivial representation of $\GL_n(\R) \times \GL_m(\R)$ does not go to  the infinitesimal character 
of a cohomological representation of $\GL_{n+m}(\R)$ (for any coefficient system) since the lifted representation --- call it $\iota( 1 \times 1)$ --- of $\GL_{n+m}(\R)$ in fact does 
not have the infinitesimal character of a finite dimensional
representation of $\GL_{n+m}(\R)$, let alone $\iota(1 \times 1)$ being cohomological. If $\nu(x)= |x|$ is the character
of $\R^\times$, then the infinitesimal character of the trivial representation of $\GL_n(\R) \times \GL_m(\R)$ is the character of $\R^\times$ given by: 
$$(\nu^{(n-1)/2}, \cdots, \nu^{-(n-1)/2},  \nu^{(m-1)/2}, \cdots, \nu^{-(m-1)/2}),$$ which is not the infinitesimal character of a finite dimensional 
representation of $\GL_{n+m}(\R)$ which is of the form $$(\nu^{\mu_1}, \nu^{\mu_2}, \cdots, \nu^{\mu_{n+m}}) {\rm~for~} 
\mu_1 > \mu_2 > \cdots > \mu_{n+m},$$ with $\mu_i$ all integers, or all half-integers.

However, the following theorem gives a nice and useful case where 
morphism  of $L$-groups $\phi: ^L\sG_1 \rightarrow {}^L\sG_2$ takes cohomological $A$-parameters of $G_1$ to cohomological $A$-parameters of $G_2$. This is an immediate consequence of Theorem \ref{cohomo}
combined with Proposition \ref{inf-ch}.

\begin{thm} \label{main} Let $\mathsf{G}_1$ and $\mathsf{G}_2$ be connected real reductive groups with $  G_1= \mathsf{G}_1(\R)$ and $G_2=\mathsf{G}_2(\R)$. 
Let $^L\sG_1 = \wG_1 \cdot W_\R$ and $^L\sG_2 = \wG_2\cdot W_\R$ be the $L$-groups of  $\mathsf{G}_1$ and $\mathsf{G}_2$ respectively. Let $\phi: {}^L\sG_1 \rightarrow {}^L\sG_2$ be a homomorphism of 
$L$-groups which takes a regular unipotent element in $\wG_1$ to a regular unipotent element in $\wG_2$.
Then $\phi$ takes cohomological  $A$-parameters of $G_1$ to cohomological  $A$-parameters of $G_2$.
Furthermore, if  $\phi: {}^L\sG_1 \rightarrow {}^L\sG_2$ has abelian kernel then for an $A$-parameter $\sigma$ for
$G_1$ for which $\phi(\sigma)$ is a cohomological $A$-parameter for $G_2$, 
  $\sigma$ itself is a cohomological $A$-parameter for $G_1$.
\end{thm}

The condition on a homomorphism $\phi: {}^L\sG_1 \rightarrow {}^L\sG_2$ 
that it takes a regular unipotent element in $\wG_1$ to a regular unipotent element in $\wG_2$ is
met in particular for the following embeddings of classical groups: 
\begin{enumerate}
            \item $\Sp_{2n}(\C) \subset \GL_{2n}(\C)$,
            \item $\SO_{2n+1}(\C) \subset \GL_{2n+1}(\C)$
            \item $\SO_{2n-1}(\C) \subset \SO_{2n}(\C)$.
           \end{enumerate}

The condition on a homomorphism $\phi: {}^L\sG_1 \rightarrow {}^L\sG_2$ 
that it takes a regular unipotent element in $\wG_1$ to a regular unipotent element in $\wG_2$ is also met for 
$\sG_1 = \sG(\R)$, and $\sG_2= {\rm R}_{\C/\R} \sG(\C)$ where ${\rm R}_{\C/\R}$ denotes Weil restriction of scalars 
from $\C$ to $\R$, and for which the embedding of $\wG_1$ in $\wG_2$ is given by the diagonal embedding:

\begin{enumerate}
\setcounter{enumi}{3}

            \item $\wG \hookrightarrow \wG \times \wG.$

           \end{enumerate}

As a consequence of Theorem \ref{main},
we have a rather complete understanding of cohomological $A$-parameters in the following theorem
for $\Sp_{2n}(\R)$ and  $\SO(p,q)(\R)$ with $p+q$ odd. The condition on a homomorphism $\phi: {}^L\sG_1 \rightarrow {}^L\sG_2$ 
that it takes a regular unipotent element in $\wG_1$ to a regular unipotent element in $\wG_2$ is
not met for the embedding $\SO_{2n}(\C) \subset \GL_{2n}(\C)$
since the principal $\SL_2(\C)$ inside $\SO_{2n}(\C)$ corresponds 
to the representation $\Sym^{2n-2}(\C + \C) + \C$, so $\SO(p,q)(\R)$
for $p+q$ even needs to be treated
separately both for the proof as well as for the statement, which we do uniformly in Example \ref{cg} in section \ref{parameters}.

Before we state the next theorem, we recall, cf. \cite{GGP}, 
that $A$-parameters for classical groups can be treated as parameters for $\GL_n(\R)$ (or, $\GL_n(\C)$ for unitary groups) which preserve either a quadratic form, or a symplectic form
(or, are conjugate-selfdual of parity $(-1)^{p+q-1}$ for $\U(p,q)(\R)$).

\begin{thm} \label{main3} Let $\sG$ be one of the  classical groups
  $\Sp_{2n}(\R)$, or $\SO(p,q)(\R)$ with $p+q$ odd. 
Their $L$-groups come equipped with a natural embedding in say $\SL_m(\C)$.
An $A$-parameter for $\sG(\R)$ is cohomological if and only if considered as a parameter with values in $\SL_m(\C)$, 
it is a cohomological $A$-parameter for $\PGL_m(\R)$. Further,
a cohomological parameter for $\PGL_m(\R)$ is automatically a
parameter (hence cohomological) for the symplectic group if $m$ is odd, and odd
orthogonal group if $m$ is even.

If $\sG = \U(p,q)$, then an $A$-parameter for $\sG(\R)= \U(p,q)(\R)$ is cohomological if and only if its restriction to $W_\C$ is a cohomological $A$-parameter for $\sG(\R)=\GL_m(\C)$, 
$m=p+q$.
\end{thm}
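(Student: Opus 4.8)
The plan is to deduce Theorem \ref{main3} from Theorem \ref{main} applied to the standard embeddings of $L$-groups listed in items (1)--(4), using the earlier classification of cohomological $A$-parameters for $\GL_n(\R)$ and $\GL_n(\C)$ (Speh-type classification) that the paper establishes before this point. First I would treat the case $\sG = \Sp_{2n}(\R)$: here $\wG = \SO_{2n+1}(\C)$ and the standard embedding $\SO_{2n+1}(\C) \hookrightarrow \GL_{2n+1}(\C)$ of item (2) sends a regular unipotent to a regular unipotent, since the principal $\SL_2$ in $\SO_{2n+1}$ maps to $\Sym^{2n}$ of the standard representation of $\SL_2$, which is the regular unipotent in $\GL_{2n+1}$. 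Thus Theorem \ref{main}, in both directions (the embedding $\SO_{2n+1}(\C)\hookrightarrow\GL_{2n+1}(\C)$ has trivial, hence abelian, kernel), gives that an $A$-parameter $\phi$ for $\Sp_{2n}(\R)$ is cohomological if and only if $\iota\circ\phi$ is a cohomological $A$-parameter for $\GL_{2n+1}(\R)$ (equivalently for $\PGL_{2n+1}(\R)$, since the central character is forced to be trivial on an odd-dimensional orthogonal parameter). For the claim that every cohomological parameter of $\PGL_m(\R)$ with $m$ odd is automatically symplectic-type (hence a parameter for $\SO_{2n+1}(\C)=\wG$ of $\Sp_{2n}(\R)$): I would invoke the $\GL_m(\R)$-classification, which says a cohomological parameter is a sum of $\Sym^{a_i}$-type blocks (twisted by characters $\chi_{n_i}$) subject to the interlacing/infinitesimal-character constraint; when $m$ is odd, after reducing mod center one checks that the unique self-duality structure compatible with the constraints is orthogonal on the $\GL_m$ side, i.e. symplectic on the dual group side, so the parameter factors through $\SO_m(\C)$.

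Next, the case $\sG = \SO(p,q)(\R)$ with $p+q$ odd: then $\wG = \Sp_{p+q-1}(\C)$, and the embedding $\Sp_{2n}(\C)\hookrightarrow\GL_{2n}(\C)$ of item (1) again sends regular unipotents to regular unipotents (the principal $\SL_2$ in $\Sp_{2n}$ is $\Sym^{2n-1}$, the regular unipotent of $\GL_{2n}$). Theorem \ref{main} then gives that an $A$-parameter for $\SO(p,q)(\R)$ is cohomological iff it is cohomological as a parameter into $\GL_{2n}(\C)$; since the parameter is symplectic its determinant is trivial, so it descends to $\PGL_{2n}(\R)=\GL_{2n}(\R)/\R^+$ and one can phrase the condition there. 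Conversely, given a cohomological parameter for $\GL_{2n}(\R)/\R^+$ valued in $\GL_{2n}(\C)$: by the $\GL$-classification it is a sum of $\Sym^{a_i}$-blocks twisted by quadratic/algebraic characters; each such block is self-dual, and the interlacing constraint forces the whole parameter to be self-dual, hence either symplectic or orthogonal as a representation of $W_\R\times\SL_2(\C)$. In the symplectic case it lands in $\Sp_{2n}(\C) = \wG$ for $\SO(p,q)(\R)$ with $p+q = 2n+1$ (and by Theorem \ref{main} it is then cohomological there); in the orthogonal case it lands in $\SO_{2n}(\C) = \wG$ for $\SO(p,q)(\R)$ with $p+q = 2n$, with the discrete invariant $p \bmod 2$ read off from the determinant character of the parameter (sign of the value at $j$). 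One caveat I would flag and handle: item (3), the embedding $\SO_{2n-1}(\C)\subset\SO_{2n}(\C)$, is the one I might also want, but for the orthogonal target we cannot use item (1) or (2) directly to land in $\SO_{2n}(\C)$ since $\SO_{2n}(\C)\subset\GL_{2n}(\C)$ does \emph{not} preserve regular unipotents; so the descent from $\GL_{2n}$ to $\SO_{2n}$ must instead be done by hand using the explicit shape of cohomological $\GL_{2n}$-parameters, which is exactly why having the Speh-type classification available is essential.

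Finally, the unitary case $\sG = \U(p,q)$: here, as the paper notes, $A$-parameters are parameters for $\GL_m(\C)$, $m = p+q$, that are conjugate-self-dual of the appropriate parity, and the relevant ``restriction to $W_\C$'' turns an $A$-parameter $\phi\colon W_\R\times\SL_2(\C)\to{}^L\U(p,q)$ into a parameter $W_\C\times\SL_2(\C)\to\GL_m(\C)$. I would argue: cohomologicality of a $(\g,K)$-module for $\U(p,q)(\R)$ is detected on the derived group and is an ``archimedean'' condition that, via the Adams--Johnson/Vogan--Zuckerman description (available by the running assumption of \cite{AMR}), is governed entirely by the infinitesimal character and the $W_\C$-restriction of the parameter, because the extra data $\phi(j)$ for a conjugate-self-dual parameter is, up to the permitted quadratic twist, determined by the restriction to $W_\C$ together with the parity --- this is the unitary-group instance of the general principle, emphasized in the introduction, that for cohomological parameters there is no difference between $W_\C$ and $W_\R$ data. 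Conversely, any cohomological $A$-parameter for $\GL_m(\C)$ which is conjugate-self-dual of the parity $(-1)^{p+q-1}$ extends to a parameter of $\U(p,q)$, and remains cohomological since passing to $\U(p,q)(\R)$ and back does not change the $(\g,K)$-cohomology computation on the derived group. The main obstacle in the whole argument is precisely this last point and its analogue in the even orthogonal case: controlling the value $\phi(j)$ (equivalently, pinning down which self-dual/conjugate-self-dual extension of a given $W_\C$-parameter actually occurs) and showing that the ambiguity is harmless for cohomologicality up to a quadratic twist --- everything else is bookkeeping with the list of $\Sym^{a}$-blocks and the interlacing condition on infinitesimal characters.
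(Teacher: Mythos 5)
Your treatment of the symplectic and odd orthogonal cases is essentially the paper's: apply Theorem \ref{main} (in both directions, the kernels being trivial, hence abelian) to the embeddings $\SO_{2n+1}(\C)\subset\GL_{2n+1}(\C)$ and $\Sp_{2n}(\C)\subset\GL_{2n}(\C)$, and then use the explicit shape $\sum_d\sigma_d\otimes[m_d+1]\oplus\omega_\R^{\{0,1\}}[a]$ of cohomological $\GL$-parameters to see that all summands are self-dual of one common parity. The only real difference is how that common parity is obtained: you get it from integrality/regularity of the infinitesimal character ("interlacing"), while the paper reads it off from Proposition \ref{central} --- the element $(-1,-1)\in\C^\times\times\SL_2(\C)$ is sent to the central element $\epsilon=2\check{\rho}(-1)$, hence acts by a single scalar on all summands, which is exactly the statement that the parities agree (see the Remark following Proposition \ref{central}); the two computations are equivalent bookkeeping. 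Your caveat that $\SO_{2n}(\C)\subset\GL_{2n}(\C)$ does not preserve regular unipotents is well placed, but note that in the even orthogonal case the theorem only asserts that the parameter factors through ${\rm O}_{2n}(\C)$ with the parity of $p$ read from the determinant; cohomologicality for $\SO(p,q)$ with $p+q$ even is deliberately deferred to the separate analysis in Section \ref{SO}, so no "descent by hand" is required at this point.

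The genuine weak point is the unitary case. The justification you offer --- that "passing to $\U(p,q)(\R)$ and back does not change the $(\g,K)$-cohomology computation on the derived group" --- is not an argument: base change relates parameters of two different groups, and there is no identification of representations or of $(\g,K)$-cohomology behind it; similarly, the claim that $\phi(j)$ is determined by the restriction to $W_\C$ up to a quadratic twist is an instance of the uniqueness results the paper proves, not a principle you can simply quote from the introduction, so as written the paragraph asserts the conclusion rather than deriving it. The clean proof, and the paper's, uses the tool you already set up for the other cases: the base-change morphism of $L$-groups ${}^L\,\U(p,q)\rightarrow{}^L(R_{\C/\R}\GL_m)$, i.e. $\widehat{\sG}\hookrightarrow\widehat{\sG}\times\widehat{\sG}$, is exactly item (4) of the introduction and takes regular unipotents to regular unipotents, so Theorem \ref{main} applies in both directions (trivial kernel); combined with Lemma \ref{shapiro}, which identifies $W_\R$-parameters into the restriction-of-scalars $L$-group with their restrictions to $W_\C$ valued in $\GL_m(\C)$, this yields precisely the stated equivalence (and is how the Corollary in Section \ref{examples} is obtained). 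Replacing your heuristic paragraph by this functoriality argument closes the gap.
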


\begin{remark}
 Besides the embeddings  of classical groups which go into determining cohomological parameters for classical groups in Theorem 
 \ref{main3}, here are the rest of embeddings of simple groups (with the smaller
 group not $\SL_2(\C)$) for which regular unipotent elements go to regular unipotent elements, cf. exercise 20 in Chapter IX of \cite{Bo}. 
 
 \begin{enumerate}
 \setcounter{enumi}{4}
            \item $F_4(\C) \subset E_6(\C)$,
            \item $G_2(\C) \subset \Spin_{7}(\C) \subset \SO_8(\C)$,
            \item $G_2(\C) \subset \SO_{7}(\C) \subset \SL_7(\C)$.
           \end{enumerate}
 
\end{remark}

Before proceeding further, we first  make the following definition.

\begin{defn}(Tempered companion of an $A$-parameter) \label{companion}
For an $A$-parameter $\Lambda: W_\R \times \SL_2(\C)  \rightarrow {}^L \sG$, we define $T(\Lambda)$ --
{\it the tempered companion}  of the A-parameter $\Lambda$ --
by the following commutative diagram
$$
\xymatrix{
W_\R \ar[r]^-{T(\Lambda)} 
\ar[rd]^-{\sigma_1} & {}^L\sG\\
& W_\R \times \SL_2(\C), \ar[u]_-{\Lambda}
}
$$
 where the parameter $\sigma_1: W_\R \rightarrow W_\R \times \SL_2(\C) $ corresponds to the lowest discrete
 series representation $D_2$ of $\PGL_2(\R)$. (By Lemma \ref{inf} below on the infinitesimal character, the
 infinitesimal characters associated with  $\Lambda$ and $T(\Lambda)$ are the same.)
\end{defn}
Our next  theorem describes $A$-parameters of cohomological representations in terms of
much easier information on $L$-parameters of cohomological tempered representations, and is a direct consequence of Theorem \ref{cohomo}. We state it here as it may have an  independent interest. This theorem may be useful
to think about $A$-parameters, but it also allows
one to read off the $L$-parameter of the tempered  representations of $\sG(\R)$
with the infinitesimal character of the trivial representation.

\begin{thm} \label{coh-para}
  Let $\mathsf{G}$ be a  connected reductive algebraic group defined over $\R$,
  and $F$, a finite dimensional irreducible 
  representation of $\sG(\C)$.
      Then an $A$-parameter
  $\Lambda: \SL_2(\C) \times W_\R \rightarrow {}^L \sG$ is a 
  cohomological $A$-parameter with coefficients in $F$, 
  if and only if $T(\Lambda): W_\R \rightarrow {}^L \sG$,
the tempered companion of the $A$-parameter $\Lambda$ just defined,  is a 
cohomological tempered $L$-parameter with coefficients in $F$, which,
up to twisting by $H^1(W_\R, Z(\wG))$, is independent of $\Lambda$.
In particular, up to twisting by $H^1(W_\R, Z(\wG))$,
a tempered $L$-parameter of $\sG(\R)$ with the infinitesimal character of the trivial representation 
is $T(\Lambda)$ for the $A$-parameter
  $\Lambda: \SL_2(\C) \times W_\R \rightarrow {}^L \sG$ corresponding to the trivial representation of $\sG(\R)$.
\end{thm}

As a simple application of the above theorem, we note the following corollary,
using and generalizing the well-known theorem, that a tempered cohomological representation
of a reductive group $\sG(\R)$ is a discrete series representation if  $\sG(\R)$  has a discrete series
representation.

\begin{cor} \label{discrete}
   Let $\mathsf{G}$ be a  connected reductive algebraic group defined over $\R$. 
      Then an $A$-parameter
  $\Lambda: \SL_2(\C) \times W_\R \rightarrow {}^L \sG$ which is a 
  cohomological $A$-parameter with coefficients in $F$, a finite dimensional irreducible 
  representation of $\sG(\C)$, is discrete (i.e., the image of $\Lambda$ does not land inside a
   parabolic in ${}^L \sG$)
  if $\sG(\R)$ has a discrete series representation. (However, the converse is not true, for example since the
  trivial representation of a group $\sG(\R)$ with $A$-parameter the principal $\SL_2(\C)$ in the $L$-group
  is always a discrete $A$-parameter irrespective of whether $\sG(\R)$ has discrete series or not;
  or the Speh representations on $\GL_{2n}(\R)$ have discrete $A$-parameters.)
  \end{cor}

\begin{remark}
For an $A$-parameter $\Lambda: W_\R \times \SL_2(\C) \rightarrow {}^L \sG$, its 
{ tempered companion} $T(\Lambda)$, is defined much in the same spirit as in the case of $p$-adic groups
where for a $p$-adic field $F$ with Weil group $W_F$,  to an $A$-parameter  $\Lambda: W_F \times \SL_2(\C) \times
\SL_2(\C) \rightarrow {}^L \sG$, one constructs a tempered $L$-parameter  $\Delta(\Lambda): W_F \times \SL_2(\C) \rightarrow {}^L \sG$, using the diagonal map $\SL_2(\C) \stackrel{\Delta}{\rightarrow} \SL_2(\C) \times
\SL_2(\C)$.
According to a well-known theorem of Moeglin, the possible tempered parts of an $A$-packet for an $A$-parameter
$\Lambda: W_F \times \SL_2(\C) \times \SL_2(\C) \rightarrow {}^L \sG$, are contained in the $L$-packet corresponding
to the tempered $L$-parameter $\Delta(\Lambda)$,
for $G$ a classical groups over $p$-adic fields.

The analogy between $T(\Lambda)$  and  $\Delta(\Lambda)$ motivates us to ask if an analogue of
the above theorem of Moeglin regarding the possible tempered part of an $A$-packet for classical groups over $\R$ holds.
We would like to say that  for an $A$-parameter $\Lambda: \SL_2(\C) \times W_\R \rightarrow {}^L \sG$, tempered representations
of $\sG(\R)$ -- if any -- in this $A$-packet have $L$-parameter given by $T(\Lambda)$.
Theorem \ref{coh-para} proves this in the
affirmative for cohomological $A$-packets (assuming that Adams-Johnson packets are $A$-packets).
If $\sG(\R)$ is either a complex group, or is a unitary group $\U(p,q)$,
all the Levi subgroups $\sL(\R)$ are connected, and therefore infinitesimal character determines tempered $L$-packets for these groups, and hence for these groups too this question has an affirmative answer.
  \end{remark}

In the following theorem which we take up in the last section of this paper,
we prove that the sum of the ranks of cohomology groups  in an $A$-packet 
on any real group (and any infinitesimal character)
is independent of the $A$-packet under consideration, and can be explicitly calculated. This result when summed over all pure inner forms has a particularly nice form for which we refer to Theorem \ref{serre}.

\begin{thm}\label{packetsum}
  Let ${G}=\sG(\R)$ be the  group of real points of a connected reductive real algebraic group $\sG$, $\theta$ a Cartan involution on $G$ with corresponding maximal compact subgroup $K=G^\theta$, and $G^u$ the compact real form of $G$.  Let $\sT^c$ be a $\theta$-stable fundamental torus in $\sG$ and write $\sT^c=\sT\,\mathsf{A}$ with $\sT$ maximally anisotropic. 
Let $E\cong E^\theta$ be a finite-dimensional algebraic representation of $\sG(\C)$. Then for a nonempty Adams-Johnson packet $\Pi$ of representations with $(\g,K)$-cohomology  with respect to $E$, the sum
$$
\sum_{\pi \in \Pi} \sum_{i\geq 0} 
\dim_\C H^i(\g,K, \pi \otimes E) 
$$
is independent of the packet, and equals  
$$
\sum_i \dim_\C H^i(\g,K,\C) = \sum_i \dim_\C H^i(G^u/K) = 
2^d\left |\frac{W(\sG,\sT^c)^\theta}{W(G,T^c)}\right|
$$
where $d = \dim \mathsf{A}= {\rm rank} (\sG) - {\rm rank}(K)$ is the  discrete series defect, $W(\sG,\sT^c)^\theta=\{w\in W(\sG,\sT^c): w\theta=\theta w\}$, and $W(G,T^c)$ is the subgroup of elements with a representative in $G$ (equivalently, it is the image of $W(K,T)$ in $W(\sG,\sT^c)$). 
\end{thm}

We now briefly describe the contents   of the paper.
\vspace{1mm}

Section \ref{prelim} fixes some notation used in the paper, and discusses some preliminaries needed for this work. In particular,  we recall how the infinitesimal character of a $(\g,K)$-module can be read off from
its Langlands parameter.

In section \ref{FD}, we discuss how a mapping of $L$-groups
$\phi: {}^L\sG_1 \rightarrow {}^L\sG_2$ 
that takes a regular unipotent element in $\wG_1$ to a regular unipotent element in $\wG_2$ takes a finite dimensional representation
of $\sG_1(\C)$ naturally to a finite dimensional representation of $\sG_2(\C)$ which is how the coefficient systems are to be used
in cohomological transfer of representations from  $\sG_1(\R)$ to  $\sG_2(\R)$ throughout the paper.

In section \ref{desiderata}, we discuss how cohomological representations and their parameters are related for groups which differ only through their centers.

In section \ref{unique} and \ref{parameters},
we prove the  existence (for each self-associate parabolic subgroup in ${}^L\sG$) and uniqueness (up to twists by characters of $\sG(\R)$) of $A$-parameters
  $\sigma: W_\R \times \SL_2(\C) \rightarrow {}^L \sG$
with  infinitesimal character that of a fixed finite dimensional representation of $\sG(\C)$.
Theorem \ref{cohomo} is the central result of this paper proved in section \ref{parameters}. Section \ref{aux} has some preparatory material for \S \ref{unique} and \S \ref{parameters}.

In section \ref{AJpack}, we recall the fundamental theorem of Adams-Johnson \cite{AJ},
  parametrizing cohomological representations  of $\sG(\R)$  via cohomological $A$-parameters $\sigma: W_\R \times \SL_2(\C) \rightarrow {}^L\sG$. 
  As the theorem of Adams-Johnson plays an important role for us, we have given a sketch of their argument based on the work of Vogan-Zuckerman \cite{VZ}. Section \ref{AJexam} discusses several examples to illustrate the structure of
  $AJ$ packets.

    Given a complete understanding of cohomological parameters in section \ref{parameters},
       section \ref{cohom-para} proves Theorem \ref{coh-para}.

Section \ref{Cgroup}  on complex groups recalls the
well-known classification of cohomological representations due to Enright although we believe our
formulation of his results is  more precise.   An important purpose of the  section is to also
discuss why the condition on {\it self-associate} parabolic does not show up from the point of view of
treating a complex group as a real group.

In section \ref{gln}, we discuss cohomological parameters for $\GL_n(\R)$, $\GL_n(\C)$, and $\U_{p,q}(\R)$.
Section  \ref{more-exam}  again has a few explicit low dimensional example of groups.

In section \ref{coho},  we prove that the sum of dimensions of the $(\g,K)$-cohomology groups over representations in an Adams-Johnson packet 
on any real group (and any infinitesimal character)
is independent of the Adams-Johnson packet under consideration, which can be explicitly calculated.
This result could be useful to decide when certain members of an Adams-Johnson packet constitute the full Adams-Johnson packet! This result when added over all pure inner forms has a particularly nice form.  
The paper ends with refining the numerical statement of Theorem~\ref{packetsum}
to a statement about (mirror) Hodge polynomials (Theorem~\ref{mirror})  when $G/K$ has an invariant complex structure.

   \section{Notation and Preliminaries} \label{prelim}
   
   In this section we fix some notation to be used in the paper, and recall some preliminary material necessary for what
   we do in this paper.

Recall that $W_{\R}= \C^\times \cdot \langle j \rangle$ 
with $j^2 =-1, jzj^{-1} = \bar{z}$ for $z \in \C^\times$.  One has $W_\R^{\rm ab}$, the maximal abelian quotient
of $W_\R$, isomorphic to $\R^\times$, in which the map from $W_\R$ to $\R^\times$, 
when restricted to $\C^\times \subset W_\R$, is just the norm mapping from $\C^\times$ to $\R^\times$ 
(given by $z\in \C^\times \rightarrow z\bar{z} \in \R^\times$). Thus,
characters  $W_\R \rightarrow \C^\times$ can be identified with 
characters of $\R^\times$, in particular, the character $x\rightarrow |x|$ on $\R^\times$ is a character on $W_\R$ which is denoted by $\nu$, and we let  $\nu$ also denote its restriction to  $\C^\times \subset W_\R$ ($\nu(z)=z\bar{z}$); also, we will use
$\nu$ to denote the character of $\GL_n(\R)$, $\GL_n(\C)$ defined by  $\nu(g) = \nu(\det g)$.

Let $\omega_\R$ denote the character of $\R^\times$ of order $2$. Since $\R^\times$ is a quotient of $W_\R$, 
it defines a character of $W_\R$ of order 2  which will also be denoted by
$\omega_\R$.

Note that the characters of $\R^\times$ are of the form $\omega_\R^{\{0,1\}}(t) |t|^s$ where $s \in \C$.   On the other hand, characters of $\C^\times$
can be written as,
$$z \rightarrow z^\mu \cdot \bar{z}^{\nu} = 
z^{\mu -\nu}\cdot  (z \bar{z})^\nu, \quad \mu,\nu \in \C, \mu -\nu \in \Z. $$

Any irreducible representation of $W_\R$ is of dimension $\leq 2$. It is an important fact that for an irreducible two dimensional representation $\sigma$ of $W_\R$, 
$$\sigma \otimes \omega_\R \cong \sigma.$$

Let  $\sigma_d$ be the 2 dimensional irreducible representation of $W_\R$ given by
${\rm Ind}_{\C^\times}^{ W_\R} (z/\bar{z})^{d/2}$ 
where  $(z/\bar{z})^{1/2}$ is the character $ z = r e^{i \theta}\rightarrow e^{i \theta}$. The representation $\sigma_d$ corresponds to  the discrete series 
representation $\pi_{d+1}$, $d\geq 1$, of $\GL_2(\R)$ with lowest weight $d+1$ which has trivial central character restricted to $\R^+ \subset \R^\times$. The representations $\sigma_d$ are self-dual representations of $W_\R$ which are
orthogonal if $d$ is even, and symplectic if $d$ is odd. Also,
 \[
    \det(\sigma_d)=\left\{
                \begin{array}{ll}
                   1 &  {\rm ~if ~} d {\rm ~ is ~ odd,} \\
                  \omega_{\R} & {\rm ~if ~} d {\rm ~ is ~ even.} 
                                  \end{array}
              \right.
  \]

Every irreducible representation $\pi$ of $\GL_n(\R)$ has an associated Langlands parameter $\sigma_\pi$ which is 
an $n$-dimensional semi-simple representation of $W_\R$.	
According to the local Langlands correspondence for $\GL_n(\R)$ (due to Langlands for all real reductive groups),
the association $\pi \rightarrow \sigma_\pi$ is a bijective correspondence between irreducible representations of $\GL_n(\R)$ (continuous representations up to, say, infinitesimal equivalence, i.e., 
one in which two continuous representations of $\GL_n(\R)$ are identified if their $(\g,K)$-modules are isomorphic)  and  
$n$-dimensional semi-simple representations of $W_\R$. For a character $\chi$ of $\R^\times$,  we can twist
a representation $\pi$  of $\GL_n(\R)$ by $\chi$, denoted by $\pi \otimes \chi$ and defined by $ (\pi \otimes \chi)(g)
= \pi(g) \chi(\det g)$. The local Langlands correspondence for $\GL_n(\R)$ is equivariant under this twisting, i.e.,
$ \sigma_{\pi \otimes \chi} = \sigma_\pi \otimes \chi$. One often writes $\pi \otimes \chi$ as $\pi\cdot \chi$.

The notion of an $A$-parameter is more relevant to the study of cohomological representations.  Recall that $A$-parameters for a general reductive group $\mathsf{G}$ over $\R$, are admissible homomorphisms of 
$W_\R \times \SL_2(\C)$ 
into the $L$-group ${}^L\sG=\wG\cdot W_\R$ of $\mathsf{G}$ whose image in $\wG$ are bounded on $W_\R$.
There is a natural map from $W_\R$ to $W_\R \times \SL_2(\C)$ which when projected to $\SL_2(\C)$ is the homomorphism into the group of diagonal
matrices given by the character $(\nu^{1/2},\nu^{-1/2})$. This gives a natural map from the set of $A$-parameters to the set of
$L$-parameters, which is known to be injective, cf. Proposition 1.3.1 of \cite{Ar1}.

For cohomological representations of $\GL_n(\R)$, a particular unitary representation called the Speh representation
--- introduced and studied by Speh in \cite{Sp} --- plays a very important role;
it is defined for a pair of integers $(d,m)$.
If $[m]$ is the unique irreducible representation of $\SL_2(\C)$ of dimension $m$,
the representation of $\GL_{2m}(\R)$ associated with the $A$-parameter
$$\sigma_d \otimes [m],$$
is an
irreducible unitary representation $\pi_d[m]$ of $\GL_{2m}(\R)$.

One way to define $\pi_d[m]$ is to
say that it is the Langlands quotient representation for $\GL_{2m}(\R)$ obtained from the essentially tempered
representation $$ \pi_d\cdot \nu^{(m-1)/2} \times \cdots \times  \pi_d\cdot \nu^{-(m-1)/2},$$
of the Levi subgroup $M = \GL_2(\R) \times \cdots \times \GL_2(\R)$ (product of $m$ factors), where $\pi_d$ is
the discrete series representation of $\GL_2(\R)/\R^+$ with lowest weight $(d+1)$, and of Langlands parameter $\sigma_d$.
The representations $\pi_d[m]$ with $d\geq  m$ are cohomological, a theorem due to Speh in \cite{Sp} for $d=m$.

We next review infinitesimal character which has meaning both for $(\g,K)$-modules as well as for $L$- and  $A$-parameters. Its importance for cohomological representations, in particular for the present work, cannot be overemphasized.

Recall that by the well-known Harish-Chandra homomorphism theorem, the center $\Zk(\g)$ of the enveloping algebra $\Uk(\g)$ of $\g$
is isomorphic to the Weyl group invariants in the enveloping algebra of a maximal torus $\t$ inside $\g$. This center  $\Zk(\g)$ operates on any irreducible
$(\g,K)$-module $\pi$ by a character $\chi_\pi$, called the infinitesimal character of the representation $\pi$.
By the Harish-Chandra homomorphism theorem, we thus get a character $\chi_\pi: \U(\t)^W \rightarrow \C$, an algebra homomorphism,
which is equivalent to a linear form
$\ell_\pi: \t \rightarrow \C$
which is well-defined up to the action of $W$ on $\t$.
If $\pi$ is a finite dimensional highest weight module of $\g$ of highest weight $\lambda: \t \rightarrow \C$
(for a fixed Borel subalgebra $\b$ containing $\t$), then
the infinitesimal character of $\pi$ is $\lambda +\rho$ where $\rho$ is half the sum of positive roots for $(\b,\t)$. 

The infinitesimal character of $\pi$ which is a linear form  $\ell_\pi: \t \rightarrow \C$, can also be considered as
a linear
map $\hat{\ell}_\pi: \C \rightarrow \hat{\t}$,
where $\hat{\t}$ is the Lie algebra of the maximal torus in the dual group associated
to the real reductive group $\sG(\R)$, both $\ell_\pi$ and $\hat{\ell}_\pi$  
are to be considered  up to the action of $W$ (on $\t$ and $\hat{\t}$ respectively).

There is also a  notion of infinitesimal character  associated with
$L$- and  $A$-parameters. The basic reason being that all the representations in an $L$-packet, or an Adams-Johnson packet,
have the same infinitesimal character.

The infinitesimal character of an $A$-parameter is defined to be  the infinitesimal
character of the associated  $L$-parameter, thus it is enough to review  the  notion of the infinitesimal character
of an $L$-parameter $\sigma: W_\R \rightarrow {}^L \sG$.  The infinitesimal character of $\sigma$ depends
only on its restriction to $\C^\times$. Assume without loss of generality that $\sigma(\C^\times) \subset \wT(\C)$, and $\sigma|_{\C^\times}$ is given by:
$$\sigma(z) = z^{\lambda} \bar{z}^\mu = (z\bar{z}) ^{(\lambda+\mu)/2} (z/\bar{z}) ^{(\lambda-\mu)/2} , {\rm ~~ where~~} \lambda , \mu \in X_\star(\wT)\otimes \C  {\rm ~~with~~} \lambda- \mu \in X_\star(\wT), $$
where in more detail, one defines $z^\lambda \in \wT(\C)$ for  $z\in \C^\times,$ and
$ \lambda \in X_\star(\wT)\otimes \C$ say for $\lambda = \alpha \otimes a$
by $$z^{ \alpha \otimes a} = \alpha(z^a),$$
where $z^a$ is  meaningful for either $z \in \R^{>0}$, $a$ an arbitrary complex number, or $z$ arbitrary, and $a$ an integer.

With this notation,  one defines the  infinitesimal character of the parameter $\sigma$ to be  $ \chi_\sigma =  \lambda \in X_\star(\wT)\otimes \C$. The importance
of this definition stems from the following folklore lemma, for
which not having found any proof in the literature, we provide one.

\begin{lemma} \label{inf}
  Suppose 
  $\sigma_\pi: W_\R \rightarrow {}^L \sG$ is the $L$-parameter of an irreducible admissible $(\g,K)$-module $\pi$.
  If $\sigma_\pi$ restricted to $\C^\times$ is given by
$$\sigma_\pi(z) = z^{\lambda} \bar{z}^\mu, {\rm ~~ where~~} \lambda , \mu \in X_\star(\wT)\otimes \C  {\rm ~~with~~} \lambda- \mu \in X_\star(\wT), $$    
  then the  infinitesimal character of $\pi$ is given by $  \lambda \in X_\star(\wT)\otimes \C = \Hom[\C, \hat{\t}]$, where $\hat{\t}$
  is the Lie algebra of $\wT$.
  \end{lemma}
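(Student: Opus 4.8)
\textbf{Proof proposal for Lemma \ref{inf}.}

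The plan is to reduce the statement to the two building blocks out of which every $L$-parameter of an irreducible $(\g,K)$-module is assembled: the Langlands classification expressing $\pi$ as a Langlands quotient of a representation induced from a (limit of) discrete series twisted by a character on a Levi $\sM$, and the transitivity of the infinitesimal character under parabolic induction. First I would recall that the infinitesimal character of a representation parabolically induced from $\sigma\otimes\nu$ on $\sM(\R)$ is, via the Harish-Chandra isomorphism, just the infinitesimal character of $\sigma\otimes\nu$ read inside the larger Cartan $\t$ (this is standard, e.g. from the explicit formula for the Harish-Chandra homomorphism and the fact that induction does not change the central character action on the torus direction). On the Galois side, the $L$-parameter of such an induced representation is $\mathrm{Ind}_{{}^L\sM}^{{}^L\sG}(\sigma_{\sigma\otimes\nu})$, and by construction its restriction to $\C^\times$ lands in $\wT(\C)$ with the same exponents $\lambda,\mu$ as the parameter on ${}^L\sM$. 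Hence it suffices to verify the claim for $\sM(\R)$, and since the infinitesimal parameter and the construction of the $L$-parameter are both compatible with passage to Levi subgroups, one reduces to the case where $\pi$ is essentially a discrete series or a one-dimensional character of a product of copies of $\R^\times$ and $\C^\times$ — in other words, to $\GL_1$ and to (limits of) discrete series of groups with compact-mod-center Cartan.

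The next step is the base case. For a character of $\R^\times$ or $\C^\times$, the statement is a direct unwinding of the local Langlands correspondence for tori together with the normalization recalled just above the lemma: a character $z\mapsto z^\mu\bar z^\nu$ of $\C^\times$ has infinitesimal character $\mu$ (the highest-weight-plus-$\rho$ recipe with $\rho=0$ for a torus), which matches $\lambda=\mu$ in the notation of the lemma. For a discrete series representation of, say, $\GL_2(\R)$ or more generally a group $\sG(\R)$ possessing discrete series, the parameter restricted to $\C^\times$ is (a sum of) the characters underlying $\sigma_d$, namely $z\mapsto (z/\bar z)^{d/2}$ up to a $\nu$-twist, and I would match the exponent $\lambda$ appearing there against the Harish-Chandra parameter of the discrete series, which is the classical statement (Harish-Chandra parameter $=\lambda+\rho$ for the highest weight $\lambda$ of the associated finite-dimensional representation) that identifies the infinitesimal character of a discrete series with a regular element of $X^\star(\sT)\otimes\C$ dual to $X_\star(\wT)\otimes\C$. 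The key point that makes the bookkeeping come out right is that the Langlands correspondence for $\GL_n(\R)$ is set up so that the half-sum-of-roots shift is already absorbed, so that the exponent $\lambda$ of $\sigma_\pi|_{\C^\times}$ is literally the infinitesimal parameter, with no further $\rho$-shift — this is exactly the content of the normalization $\sigma(z)=z^\lambda\bar z^\mu$ chosen in the paragraph preceding the lemma.

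The main obstacle, and the place where some care is needed, is the matching of normalizations: there is a genuine choice in how one identifies $\C^\times$ inside $W_\R\times\SL_2(\C)$ (the paper fixes $(\nu^{1/2},\nu^{-1/2})$ for the $\SL_2$-factor), in how $z^\lambda\in\wT(\C)$ is defined for non-integral cocharacters, and in whether the Weyl denominator $\rho$ is attached to the finite-dimensional representation or to its dual. I would therefore spend the bulk of the argument pinning down one consistent set of conventions — using the explicit description of $\sigma_d$, the formula $z^{\alpha\otimes a}=\alpha(z^a)$ given above, and the identification $X_\star(\wT)\otimes\C=\Hom(\C,\hat\t)$ — and then checking that with these conventions the two base cases and the induction step all produce the same linear form $\lambda$. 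Once the conventions are fixed, transitivity of parabolic induction on both sides (representation-theoretic and Galois-theoretic) closes the argument with no further computation; the ``more direct proof'' promised over Taibi's Lemma 4.1.3 is essentially this reduction-to-$\GL_1$-and-discrete-series together with the observation that no $\rho$-shift intervenes.
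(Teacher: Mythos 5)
Your proposal is correct and takes essentially the same route as the paper: reduce to the (essentially) discrete series case via parabolic induction and the Langlands quotient formalism, note that the infinitesimal character is unchanged on the representation side while on the dual side the parameter is simply viewed inside $\wG$ via the embedding ${}^L\sM \hookrightarrow {}^L\sG$, and then quote the known form $z \mapsto (z/\bar z)^{\lambda+\rho}$ of the discrete series parameter from Langlands. One small correction: the parameter of the induced representation is not $\mathrm{Ind}_{{}^L\sM}^{{}^L\sG}$ of the Levi's parameter but its composite with the inclusion ${}^L\sM \hookrightarrow {}^L\sG$ — which is in fact what you use when you say the exponents $\lambda,\mu$ are unchanged.
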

\begin{proof}
  Let us begin by observing that the lemma is correct for essentially
  discrete series representations of any reductive group $\sM$ since for an essentially  discrete
  series representation with infinitesimal character $\lambda+ \rho$, the associated parameter restricted to $\C^\times$ is given by:
  $$z\longrightarrow (z/\bar{z})^{\lambda+ \rho}.$$
  (We will not ask why this is the parameter for discrete series as it is written in the Scriptures \cite{La}!) The lemma follows
  from this,  
  since one goes from discrete series to tempered by parabolic induction, and then from tempered to general representations
  by the Langlands quotient theorem. For parameters, the way we have defined infinitesimal character,
  it is as if nothing has happened in both the steps of going from a discrete series representation to a finite direct
  sum of tempered representations, and from a tempered representation
  to a general representation: it is the same parameter being considered inside $\wG$ instead of $\wM$ under the natural inclusion
  of $\wM$ inside $\wG$. The same is true for infinitesimal
  character under parabolic induction of $(\g,K)$-modules: ${\rm Ind}_P^{\sG(\R)} (\pi)$ has the same infinitesimal character
  as a representation of $\sG(\R)$
  as the representation $\pi$ of the Levi subgroup $\sM(\R)$ associated with the parabolic $P$, noting that both
  $\sG(\C)$ and $\sM(\C)$
  share a common maximal torus $\sT(\C)$, and that for both the groups $\sG(\R)$ and $\sM(\R)$, infinitesimal
  character is a homomorphism  $\ell: \t \rightarrow \C$
  on the Lie algebra $\t$ of $\sT(\C)$.
    \end{proof}

As a practice on using infinitesimal characters, we prove the following lemma which will come useful later.

\begin{lemma} \label{equal}
Let
$\sigma: \C^\times \times \SL_2(\C) \rightarrow \wG$
be an $A$-parameter with real infinitesimal character.
  If $\sigma$ restricted to $\C^\times$ is given by
  $$\sigma(z) = z^{\lambda} \bar{z}^\mu, {\rm ~~ where~~} \lambda , \mu \in X_\star(\wT)\otimes \C  {\rm ~~with~~} \lambda- \mu \in X_\star(\wT), $$    then,
  $$\lambda+ \mu = 0,$$
  equivalently, $\sigma: \C^\times \times \SL_2(\C) \rightarrow \wG$ factors through $\sigma: (\C^\times/\R^+) \times \SL_2(\C) \rightarrow \wG$.
  \end{lemma}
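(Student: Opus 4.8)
The plan is to compute the infinitesimal character of the $A$-parameter $\sigma$ in two ways and compare. On one hand, by hypothesis $\sigma$ has integral infinitesimal character — i.e. the infinitesimal parameter $\chi_\sigma = \lambda \in X_\star(\wT)\otimes\C$ is (up to the Weyl group) the highest weight plus $\rho$ of a finite dimensional representation $F$ of $\sG(\C)$, hence in particular lies in $X_\star(\wT)$, the cocharacter lattice, not merely in $X_\star(\wT)\otimes\C$. On the other hand I would exploit the constraint that $\sigma$ restricted to $\C^\times\times\SL_2(\C)$ extends (or at least is itself) a genuine $A$-parameter, so that the image of $\C^\times$ is built from a unitary part times the $\SL_2(\C)$-twist. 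More precisely: the natural map from $A$-parameters to $L$-parameters sends $\sigma$ to the $L$-parameter $\sigma'$ whose restriction to $\C^\times$ is $z\mapsto \sigma(z, \mathrm{diag}((z\bar z)^{1/2}, (z\bar z)^{-1/2}))$, and it is the infinitesimal parameter of this $L$-parameter that equals $\lambda$. The key structural fact I want is that the ``$W_\R$-part'' of an $A$-parameter — i.e. the restriction to $W_\R$ composed into $\wG$ — lands in a parameter that is \emph{tempered}, hence bounded, hence its restriction to $\C^\times$ is of the form $z\mapsto z^{a}\bar z^{-a}$ with $a + (-a) = 0$; the non-temperedness is entirely carried by the $\SL_2(\C)$ factor, which contributes $z\mapsto z^{h/2}\bar z^{h/2}$ for the coweight $h$ corresponding to $\mathrm{diag}(1,-1)$, i.e. a symmetric contribution to $\lambda$ and $\mu$.

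Concretely, write $\sigma|_{\C^\times}$ through $\wT$ as $\sigma(z) = z^\lambda\bar z^\mu$. Decompose the restriction of $\sigma$ to $\C^\times\times\SL_2(\C)$ as the commuting product of $\sigma|_{\C^\times\times 1}$ and $\sigma|_{1\times\SL_2(\C)}$. The $\SL_2(\C)$ part, evaluated on the image of $\C^\times$ inside $\SL_2(\C)$ under $z\mapsto \mathrm{diag}(z\bar z,1)$ or rather under the cocharacter $(\nu^{1/2},\nu^{-1/2})$ built into the $A$-to-$L$ map, contributes a term of the shape $(z\bar z)^{\nu_0}$ for some $\nu_0\in X_\star(\wT)\otimes\C$ — crucially symmetric in $z$ and $\bar z$, so it adds equally to $\lambda$ and to $\mu$. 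Meanwhile $\sigma|_{\C^\times\times 1}$ is the $\C^\times$-restriction of the map $W_\R\times 1 \to \wG$, which is the Langlands parameter of a \emph{unitary} (tempered) representation of the relevant real group — here I would invoke that an $A$-parameter is by definition one whose composition to ${}^L\sG/\wZ$ is an $A$-parameter in Arthur's sense, and such a parameter has bounded image on $W_\R$; writing it as $z\mapsto z^{\alpha}\bar z^{\beta}$, boundedness forces $\alpha + \beta \in X_\star(\wZ)\otimes i\R$, and integrality of $\lambda$ then forces $\alpha+\beta$ to lie in a lattice, hence to be $0$ modulo the center. Combining, $\lambda + \mu = (\alpha+\beta) + 2\nu_0$, and the integrality hypothesis on $\lambda$ (together with $\lambda - \mu\in X_\star(\wT)$, so $\lambda+\mu$ is as integral as $2\lambda$) pins down $\lambda+\mu = 0$. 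The final clause — that $\sigma$ then factors through $(\C^\times/\R^+)\times\SL_2(\C)$ — is immediate, since $\lambda+\mu=0$ says $\sigma(z) = z^\lambda\bar z^{-\lambda} = (z/\bar z)^\lambda$, which kills $\R^+\subset\C^\times$.

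The step I expect to be the main obstacle is making rigorous the claim that the ``$W_\R$-part'' of a (modified) $A$-parameter is bounded \emph{after} accounting for the central modification — the paper has explicitly loosened the definition of $A$-parameter to allow non-unitary central twists coming from algebraic characters of the central torus, so one cannot simply say ``$A$-parameters are bounded.'' The clean way around this is to use Lemma \ref{inf}: the infinitesimal parameter only sees $\lambda$, and the central-twist ambiguity affects $\lambda$ and $\mu$ symmetrically in a way compatible with an \emph{algebraic} (integral) character; one then argues that the projection of $\sigma$ to ${}^L T$ (the quotient of ${}^L\sG$ by the derived group of $\wG$) corresponds to the restriction to $T(\R)$ of an algebraic character, whose parameter on $\C^\times$ is of the form $z\mapsto z^c\bar z^c$ — again symmetric — while the projection to the adjoint group ${}^L\sG/\wZ$ is a genuine bounded $A$-parameter contributing only $(z/\bar z)$-type terms plus the $\SL_2$-twist. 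Assembling these two projections and using that they jointly determine $\lambda+\mu$ modulo a controllable lattice, integrality closes the argument. I would organize the write-up so that the reduction to the torus quotient and the adjoint quotient is done first, and the boundedness input is quoted only for honest (unmodified) $A$-parameters on the adjoint group.
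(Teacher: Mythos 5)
Your core argument coincides with the paper's proof: the restriction of $\sigma$ to $\C^\times\times 1$ is unitary, so $\lambda+\mu\in\sqrt{-1}\,X_\star(\wT)\otimes\R$; the infinitesimal character is read off through the $A$-to-$L$ map, in which the $\SL_2(\C)$-factor contributes only the symmetric term $(z\bar z)^{\alpha/2}$ with $\alpha\in X_\star(\wT)$, so integrality forces $\lambda\in X_\star(\wT)\otimes\R$, hence (using $\lambda-\mu\in X_\star(\wT)$) $\lambda+\mu$ is real; real and purely imaginary together give $\lambda+\mu=0$. So this is essentially the same route, and it works.

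Two cautions on the write-up. First, your opening claim that temperedness of the $W_\R$-part forces its restriction to $\C^\times$ to be of the form $z\mapsto z^{a}\bar z^{-a}$ is too strong and, taken literally, assumes the conclusion: bounded characters of $\C^\times$ may carry a factor $(z\bar z)^{it}$, so boundedness only yields that $\lambda+\mu$ is purely imaginary, and it is precisely the integrality of the infinitesimal character that kills this imaginary part (your later sentences do recover this, but the earlier formulation should be dropped); likewise the infinitesimal parameter lies in $X_\star(\wT)+\check{\rho}_{\wG}$, not necessarily in $X_\star(\wT)$ -- all one needs is that it is real. Second, the detour through the loosened definition of $A$-parameter (projecting to ${}^L\sG/\wZ$ and to ${}^LT$) is not how the paper argues and, as sketched, would not close the central direction: the ${}^LT$-projection of the parameter of an algebraic central character restricted to $\C^\times$ is $z\mapsto z^{c}\bar z^{c}$ with $c$ generally nonzero, so ``bounded modulo centre plus algebraic on the torus quotient'' only gives $\lambda+\mu=2c$, not $0$ (already for $\GL_1(\R)$ and the character $t\mapsto t^{n}$ one has $\lambda+\mu=2n$). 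The paper's proof simply invokes unitarity of $\sigma|_{\C^\times}$ itself, i.e.\ it reads the lemma with the usual bounded-on-$W_\R$ notion of $A$-parameter, and your argument should do the same rather than attempt the adjoint/torus splitting.
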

\begin{proof}
The parameter $ \sigma: \C^\times \times \SL_2(\C) \rightarrow \wG$ can be assumed to take $\C^\times \times \C^\times$ 
into the maximal torus $\wT$ in $\wG$ used to define the dual group, where the second $\C^\times$ is the diagonal torus in $\SL_2(\C)$
written as $ z = \left ( \begin{array}{cc} z & 0 \\  0 & z^{-1} \end{array}\right )$.

We denote $\sigma$ restricted to $\C^\times \times \C^\times$ as $\sigma(z_1,z_2) = \sigma_1(z_1) \sigma_2(z_2)$ where $\sigma_2: \C^\times \rightarrow \wT$
is an algebraic homomorphism 
$\sigma_2(z_2) = z_2^\alpha$ for some $\alpha \in X_{\star}(\wT)$.

The homomorphism $\sigma_1: \C^\times \rightarrow \wT$
has the form:
 $$z \longrightarrow z^\lambda \bar{z}^\mu = (z\bar{z})^\lambda {\bar z}^{(\mu -\lambda)}, \hspace{1cm} 
 \lambda,\mu \in X_{\star}(\wT) \otimes \C \hspace{.5cm} {\rm with} \hspace{.5cm}  
 \lambda -\mu \in X_{\star}(\wT).$$
 
 By the definition of an $A$-parameter, $\sigma_1: \C^\times \rightarrow \wT$ is unitary, i.e., $|\sigma_1(z)|=1$,
 which when applied to the definition of $\sigma_1(z) = z^\lambda \bar{z}^\mu$, it follows that:
 \[ \lambda+ \mu \in \sqrt{-1} X_{\star}(\wT) \otimes \R. \tag {1} \]

 We calculate the infinitesimal character of $\sigma$ by using the underlying $L$-parameter.
  For this,  consider the map 
 
$$ \begin{array}{ccccc}\C^\times & \rightarrow &  \C^\times  \times \SL_2(\C) & \rightarrow & \wG \\
   z & \rightarrow & z \times \left ( \begin{array}{cc} (z\bar{z})^{1/2} & 0 \\  0 & (z\bar{z})^{-1/2} \end{array}\right  ) & \rightarrow & \wG,
  \end{array} $$
  which is $$\sigma(z,(z\bar{z})^{1/2}) = 
 \sigma_1(z) \sigma_2((z\bar{z})^{1/2}) = z^\lambda {\bar z}^{\mu} (z\bar{z})^{\alpha/2}.$$
 Thus by Lemma \ref{inf}, the infinitesimal character of $\sigma$ is
 \[z \rightarrow z^\lambda z^{\alpha/2}  .  \tag{2} \]
 Since the infinitesimal character of  $\sigma$ is given to be real,
 we must have  $$(\lambda + \alpha/2)  \in X_{\star}(\wT) \otimes \R,$$
 in particular, since $\alpha \in X_{\star}(\wT)$,
 \[\lambda  \in X_{\star}(\wT) \otimes \R. \tag{3} \]

 Since $ \lambda -\mu \in X_{\star}(\wT),$ this together with $(1)$ and $(3)$ proves the lemma.  \end{proof}

This paper makes considerable use of the principal $\SL_2(\C)$ in a complex reductive group $\sH(\C) $ (which usually arise
as the dual group of a real reductive group). The principal $\SL_2(\C)$ inside $\sH(\C)$ has the property that a
non-trivial unipotent element in $\SL_2(\C)$ goes to a regular unipotent element in $\sH(\C)$. Existence and uniqueness
(up to conjugacy by $\sH(\C)$) of the principal $\SL_2(\C)$ is a basic property of these principal $\SL_2(\C)$ inside
$\sH(\C)$ (which may not inject $\SL_2(\C)$ inside $\sH(\C)$ but may have the central $\pm 1$ as the kernel).

The following
proposition summarizes the properties of the principal $\SL_2(\C)$ that we will need.

\begin{prop} \label{J-M}
  Let ${}^L \sG = \wG \cdot \Gal(\C/\R) $ be (the Galois form) of the  $L$-group of a reductive group $\sG$ over $\R$,
  which comes equipped with a pair
  $(\wB,\wT)$ consisting of  a Borel subgroup $\wB$ containing
  a maximal torus $\wT$ together with a pinning on $\wB$ left invariant by $\Gal(\C/\R)$.
  Then there exists a unique homomorphism $ \Lambda: \SL_2(\C) \times W_\R \rightarrow {}^L \sG$
  which restricted to $\SL_2(\C)$, denoted $\Lambda_{\wG}$,  is a principal $\SL_2(\C)$ in $\wG$ and takes the standard
  pinning on $\SL_2(\C)$ to the fixed pinning on $\wG$, and  is the
  natural map  from $W_\R$ to $\Gal(\C/\R)$.
    Identifying the diagonal torus of $\SL_2(\C)$ with $\C^\times$, in which the diagonal matrix in $\SL_2(\C)$
  with entries $(z,z^{-1})$ goes to $z \in \C^\times$, the resulting homomorphism from $\C^\times$ to $\wT$ is the co-character
  $ 2 \check{\rho}_{\wG},$ which is the sum of positive coroots $\C^\times \rightarrow \wT$.

  Let $s  =  \left ( \begin{array}{cc} 0 & 1 \\  -1 & 0 \end{array}\right ) \in \SL_2(\C)$,
  and 
  $\varpi_{\wG}= \omega_{\wG} \cdot j = \Lambda( s \times j) \in{}^L \sG   $ where $\omega_{\wG} \in \wG$ represents
  the longest element in the
  Weyl group of $\wG$ defined using $(\wB,\wT)$. Then  conjugation by $\varpi_{\wG}$
  preserves $\wT$,  takes $\wB$ to the opposite Borel subgroup
  $\wB^-$,  and has the property that \[\varpi_{\wG}^2= \Lambda( s \times j)^2 = \Lambda(-1,-1) = \epsilon, \]
    where  $\epsilon=
  2 \check{\rho}_{\wG}(-1)$ is  an
  element in the center of $\wG$,  with $\epsilon^2=1$.    \end{prop}
\begin{proof}
  All of this proposition is of course standard and elementary.
  We only comment that the assertion on
  restriction of the principal $\SL_2(\C)$
  to the diagonal torus, a homomorphism from $\C^\times \rightarrow \wT$,  being the co-character
  $ 2 \check{\rho}_{\wG}$ (sum of positive coroots $\C^\times \rightarrow \wG$), 
is a reflection of 
  the well-known identity:
\[2 \langle \rho_{\sG}, \alpha^\vee \rangle = 1,\]
  for all positive simple roots $\alpha$ in $\sG$.

  For the fact that $\epsilon= 2 \check{\rho}_{\wG}(-1)$ is an
  element in the center of $\wG$ with $\epsilon^2=1$, we refer to \cite{GR}, equation (65), from where we have also borrowed
  the notation $\epsilon$ for  the element $2 \check{\rho}_{\wG}(-1)$ in the center of $\wG$. (The work \cite{GR} uses
this element for $\sG$!) \end{proof}

\begin{remark}
  In this paper we will have many occasions to use $ \omega_{\wG}$, the longest element in the Weyl group,
  as an element in $\wG$. The above proposition defines it uniquely once we are given a pinning
  on $\wG$ which dual groups are supposed to come equipped with.
  \end{remark}

  \section{Functoriality of finite dimensional representations} \label{FD}
 
The condition on a homomorphism $\phi: {}^L\sG_1 \rightarrow {}^L\sG_2$ 
that it takes a regular unipotent element in ${\wG}_1$ to a regular unipotent element in $\wG_2$
was suggested by the well-known observation that a principal $\SL_2(\C)$ inside
the $L$-group of a split group, treated as an $A$-parameter,  corresponds to the trivial representation of the group, therefore the condition that $\phi: {}^L\sG_1 \rightarrow {}^L\sG_2$ 
takes a regular unipotent element in $\wG_1$ to a regular unipotent element in $\wG_2$ implies that the trivial representation of $\sG_1(\R)$ goes to the trivial
representation of $\sG_2(\R)$. 

In this section we check that once $\phi: {}^L\sG_1 \rightarrow {}^L\sG_2$ 
takes a regular unipotent element in $\wG_1$ to a regular unipotent element in $\wG_2$, then the $L$-parameters of finite dimensional irreducible representations of $\sG_1(\C)$ go to the $L$-parameters of finite
dimensional irreducible representations of $\sG_2(\C)$. This is  essential for us if $\phi$ were to take a  cohomological
representation of $\sG_1(\R)$ to a cohomological representation of $\sG_2(\R)$ since cohomological
representations have an underlying coefficient system, so the coefficient system needs to be related too.

Let's begin with a homomorphism of the triple $\phi: (\wG_1, \wB_1,\wT_1) \rightarrow (\wG_2,\wB_2,\wT_2)$ 
that  takes a regular unipotent element in ${\wG}_1$ to a regular unipotent element in $\wG_2$.
In particular, $\phi$ gives rise to a mapping of co-character groups $\phi_\star: X_\star(\wT_1) \rightarrow X_\star(\wT_2)$.
Let $\check{\rho}_{\wG_1}$ be half the sum of positive coroots for $(\wG_1,\wB_1,\wT_1)$ which is an element
in $ X_\star(\wT_1) \otimes \Q$, and similarly
let $\check{\rho}_{\wG_2}$ be half the sum of positive coroots for $(\wG_2,\wB_2,\wT_2)$ which is an element
in $ X_\star(\wT_2) \otimes \Q$.

\begin{lemma} \label{rho} Let $\phi: (\wG_1, \wB_1,\wT_1) \rightarrow (\wG_2,\wB_2,\wT_2)$ 
  be a homomorphism that takes a regular unipotent element in ${\wG}_1$ to a regular unipotent element in $\wG_2$.
    Then for the mapping of co-character groups $\phi_\star: X_\star(\wT_1) \otimes \Q \rightarrow X_\star(\wT_2) \otimes \Q$,
  $$\phi_\star(\check{\rho}_{\wG_1}) =  \check{\rho}_{\wG_2}.$$
  \end{lemma}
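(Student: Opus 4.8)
The plan is to reduce the statement to the well-known characterization of regular unipotent elements via the principal $\SL_2(\C)$, and then to use the uniqueness (up to conjugacy) of the principal $\SL_2(\C)$ together with Proposition~\ref{J-M}. First I would observe that since $\phi$ takes a regular unipotent element of $\wG_1$ to a regular unipotent element of $\wG_2$, it carries a principal $\SL_2(\C)$ in $\wG_1$ to a principal $\SL_2(\C)$ in $\wG_2$: indeed, a nontrivial unipotent in the source $\SL_2(\C)$ maps to a regular unipotent, and a homomorphism $\SL_2(\C) \to \wG_2$ whose nontrivial unipotents are regular is precisely a principal $\SL_2(\C)$ (this is the standard Jacobson–Morozov/Kostant characterization). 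After conjugating in $\wG_2$, we may assume $\phi$ restricted to this principal $\SL_2(\C)$ is compatible with the pinnings: the diagonal torus of the source $\SL_2(\C)$ lands in $\wT_1$ and then, via $\phi$, in $\wT_2$; the upper-triangular unipotents land in $\wB_1$ and then in $\wB_2$; and similarly for the opposite Borels.

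Next I would invoke Proposition~\ref{J-M} twice. On the $\wG_1$ side, identifying the diagonal torus of the source $\SL_2(\C)$ with $\C^\times$ via $(z,z^{-1}) \mapsto z$, the induced cocharacter $\C^\times \to \wT_1$ is exactly $2\check{\rho}_{\G_1}$. On the $\wG_2$ side, the composite principal $\SL_2(\C) \to \wG_2$ gives, by the same proposition, the cocharacter $2\check{\rho}_{\G_2} : \C^\times \to \wT_2$ — here one uses that $\phi$ sends $\wB_1 \to \wB_2$ and $\wB_1^- \to \wB_2^-$, so the image principal $\SL_2(\C)$ is in ``standard position'' with respect to $(\wB_2,\wT_2)$, and Proposition~\ref{J-M} pins down its restriction to the diagonal torus uniquely. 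Comparing, $\phi_\star(2\check{\rho}_{\G_1}) = 2\check{\rho}_{\G_2}$ as elements of $X_\star(\wT_2)$, and since $X_\star(\wT_2)$ is torsion-free we may divide by $2$ to conclude $\phi_\star(\check{\rho}_{\G_1}) = \check{\rho}_{\G_2}$.

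The one point requiring a little care — and the likeliest obstacle — is the conjugation step: a priori $\phi$ need not send $\wB_1$ to $\wB_2$ or $\wT_1$ to $\wT_2$. The resolution is that the statement $\phi_\star(\check{\rho}_{\G_1}) = \check{\rho}_{\G_2}$ is insensitive to $\wG_2$-conjugation of $\phi$ only up to the Weyl group action on $X_\star(\wT_2)$, so I must genuinely arrange that the principal $\SL_2(\C)$ in the image is the one attached to the \emph{positive} system $(\wB_2,\wT_2)$, not some Weyl-conjugate. This is achieved because $\phi$ is a morphism of the \emph{triples} $(\wG_i,\wB_i,\wT_i)$ (as set up just before the lemma), hence $\phi(\wB_1) \subseteq \wB_2$ and $\phi(\wT_1) \subseteq \wT_2$ automatically; combined with the compatible pinning of the source principal $\SL_2(\C)$ with $(\wB_1,\wT_1)$ from Proposition~\ref{J-M}, the image principal $\SL_2(\C)$ is automatically in standard position with respect to $(\wB_2,\wT_2)$, and the sign/Weyl ambiguity disappears. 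The rest is the routine identity $2\langle \check\rho, \alpha\rangle = \langle\alpha,\alpha\rangle$ already recalled in Proposition~\ref{J-M}, which is not even needed here beyond what that proposition packages.
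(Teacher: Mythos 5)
Your proof is correct and takes essentially the same route as the paper: both arguments compose the pinned principal $\SL_2(\C)$ of $\wG_1$ with $\phi$, observe that the composite is a principal $\SL_2(\C)$ of $\wG_2$ compatible with $(\wB_2,\wT_2)$, and apply Proposition \ref{J-M} twice to conclude $\phi_\star(2\check{\rho}_{\wG_1})=2\check{\rho}_{\wG_2}$. The paper only adds a preliminary reduction to the derived group and then to the case $\wG_1=\SL_2(\C)$, which your direct version shows is dispensable (and your appeal to $\phi(\wB_1^-)\subseteq\wB_2^-$ is likewise avoidable, since principality together with $\phi(\wT_1)\subseteq\wT_2$ and $\phi(\wB_1)\subseteq\wB_2$ already forces the diagonal cocharacter to be $2\check{\rho}_{\wG_2}$ rather than a Weyl conjugate).
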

\begin{proof}
Since the only semisimple element in the centralizer of a regular unipotent element in any reductive group
  are the central elements, the hypothesis in the lemma implies that
  the center of ${\wG}_1$ goes to the center of ${\wG}_2$, therefore $\phi$ induces a map $\bar{\phi}$ on the adjoint groups
  $\bar{\phi}: (\wG_1/\wZ_1, \wB_1/\wZ_1,\wT_1/\wZ_1) \rightarrow (\wG_2/\wZ_2,\wB_2/\wZ_2,\wT_2/\wZ_2)$. 

  Observe that  for the  homomorphism of the co-character groups $\iota_\star: X_\star(\wT_1) \rightarrow X_\star(\wT_1/\wZ_1)$
  induced by the natural surjection
  $\iota: \wT_1 \rightarrow \wT_1/\wZ_1$, we have
\[\iota_\star(\check{\rho}_{\wG_1}) =  \check{\rho},\]
where   $\check{\rho}$ denotes half the sum of positive coroots of $\wG_1/\wZ_1$.

It suffices then to prove the lemma assuming that $\wG_1$ is an adjoint group, hence a product of simple groups. Since $\check{\rho}_{\wG}$, for $\wG$  a product of groups,  is the direct sum of the individual components, it suffices to assume that 
$\phi: \wG_1 \rightarrow \wG_2$ is injective, and therefore  $\phi_\star: X_\star(\wT_1) \rightarrow X_\star(\wT_2)$ is injective too.
 Because of this injectivity, it suffices to prove the lemma in the special case of
 $\wG_1 = \SL_2(\C)$ mapping to $\wG_2$ through a principal
 $\SL_2(\C)$. In this case, the lemma is a consequence of Proposition \ref{J-M} on noting that
 for $\SL_2(\C)$, $2 \check{\rho}$ is the standard co-character $z\rightarrow (z,z^{-1})$, which by
 Proposition \ref{J-M} goes to $2  \check{\rho}_{\wG_2}$ under the principal $\SL_2(\C)$ inside $\wG_2$.
 \end{proof}

A finite dimensional irreducible
representation of $\sG_1(\C)$
with highest weight $\lambda_1$ (treated here as an element of $ X_\star(\wT_1)$)
has an L-parameter 
given by the  map: $W_\C=\C^\times \rightarrow \wT_1$
which is $z\rightarrow z^{\lambda_1+  \check{\rho}_{\wG_1}}\bar{z}^{ \check{\rho}_{\wG_1}}$. (This follows by realizing a finite
dimensional representation of $\sG_1(\C)$
with highest weight $\lambda_1$ as a Langlands quotient of the un-normalized
principal series representation induced from the
character $t\rightarrow t^{\lambda_1}$ on the maximal torus $\sT_1(\C)\subset B_1(\C)$, and noting that
the modulus character for $\sG_1(\C)/B_1(\C)$ is ${\rho}_{\G_1}(t) \cdot \overline{ {\rho}_{\G_1}(t)}$.) 
By lemma \ref{rho} we get the following.

  \begin{prop} \label{hw}
Let $\phi: (\wG_1, \wB_1,\wT_1) \rightarrow (\wG_2,\wB_2,\wT_2)$ 
  be a homomorphism that takes a regular unipotent element in ${\wG}_1$ to a regular unipotent element in $\wG_2$.
    Then  $\phi: \wG_1 \rightarrow \wG_2$ 
    takes the Langlands parameter (i.e., the associated map $W_\C=\C^\times \rightarrow {\wG_1}$)
    of a finite dimensional  irreducible
representation of $\sG_1(\C)$
with highest weight $\lambda_1 \in  X^\star(\sT_1) = X_\star(\wT_1)$ to 
the Langlands parameter of the finite dimensional  irreducible
representation of $\sG_2(\C)$
with highest weight $\lambda_2=\phi_\star(\lambda_1) $ where $\phi_\star: X_\star(\wT_1) \rightarrow X_\star(\wT_2)$ is
  induced from the mapping of tori $\wT_1 \stackrel{\phi}\rightarrow \wT_2$.
\end{prop}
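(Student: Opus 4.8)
The plan is to pin each finite‑dimensional representation down by its infinitesimal parameter and then transport that datum along $\phi$ via Lemma \ref{rho}. Recall (from the discussion just before the proposition, together with Lemma \ref{inf}) that the $L$-parameter of the finite‑dimensional irreducible representation $F_{\lambda_1}$ of $\sG_1(\C)$ of highest weight $\lambda_1$ has, on $W_\C=\C^\times$, infinitesimal parameter the regular dominant cocharacter $\lambda_1+\check\rho_{\G_1}\in X_\star(\wT_1)$, and that it is of ``finite‑dimensional type'' --- its restriction to $W_\C$ is of the shape $z\mapsto(z\bar z)^{\lambda_1+\check\rho_{\G_1}}$, valued in $\wT_1$; likewise on the $\sG_2$-side. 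So it is enough to establish two things: (a) that $\phi_\star$ carries dominant cocharacters of $\wT_1$ to dominant cocharacters of $\wT_2$, so that $\lambda_2:=\phi_\star(\lambda_1)$ really is a highest weight of $\sG_2(\C)$; and (b) that the assignment $\lambda\mapsto\lambda+\check\rho$ from highest weights to infinitesimal parameters is intertwined by $\phi_\star$. Now (b) is precisely Lemma \ref{rho}: $\phi_\star(\lambda_1+\check\rho_{\G_1})=\phi_\star(\lambda_1)+\phi_\star(\check\rho_{\G_1})=\lambda_2+\check\rho_{\G_2}$. Granting (a), the $L$-parameter of $F_{\lambda_1}$ composed with $\phi$ therefore remains of finite‑dimensional type (its $W_\C$-restriction is $z\mapsto(z\bar z)^{\lambda_2+\check\rho_{\G_2}}$, valued in $\wT_2$) and has infinitesimal parameter $\lambda_2+\check\rho_{\G_2}$; since among parameters of finite‑dimensional irreducibles of $\sG_2(\R)$ such a one is determined by its regular dominant infinitesimal parameter, this composite is the $L$-parameter of $F_{\lambda_2}=F_{\phi_\star(\lambda_1)}$, as claimed.

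The step carrying real content is (a), and it is here --- and only here --- that the hypothesis that $\phi$ sends a regular unipotent of $\wG_1$ to a regular unipotent of $\wG_2$ is used, beyond $\phi$ being a morphism of the triples. Fix a regular unipotent $u_1$ in the unipotent radical of $\wB_1$, so that $u_2:=\phi(u_1)$ is a regular unipotent of $\wG_2$, which we place in $\wB_2$. For $\mu\in X_\star(\wT)$ put $P(\mu)=\{\,g\in\wG:\lim_{t\to0}\mu(t)\,g\,\mu(t)^{-1}\text{ exists in }\wG\,\}$, a parabolic subgroup; since $\mu(t)\,x_\gamma(s)\,\mu(t)^{-1}=x_\gamma(t^{\langle\gamma,\mu\rangle}s)$ on root subgroups, one has $\wB\subseteq P(\mu)$ exactly when $\mu$ is dominant. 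If $\lambda_1$ is dominant then $u_1\in\wB_1\subseteq P(\lambda_1)$; since $(\phi_\star\lambda_1)(t)=\phi(\lambda_1(t))$ and $\phi$ is an algebraic homomorphism, $\phi(P(\lambda_1))\subseteq P(\phi_\star\lambda_1)$, so $u_2\in P(\phi_\star\lambda_1)$. But a regular unipotent element of $\wG_2$ lies in a unique Borel of $\wG_2$, namely $\wB_2$, so any parabolic of $\wG_2$ containing $u_2$ contains $\wB_2$; hence $\wB_2\subseteq P(\phi_\star\lambda_1)$, i.e.\ $\phi_\star(\lambda_1)$ is dominant. (That the regular‑unipotent hypothesis cannot be dropped is visible in the morphism of triples $\Gm\rightarrow\SL_2(\C)$, $z\mapsto{\rm diag}(z^{-1},z)$, which reverses dominance --- and which, consistently with the above, does not carry the regular unipotent of $\Gm$ to a regular unipotent of $\SL_2(\C)$.)

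In short: the main obstacle is the dominance preservation (a); everything else reduces, via the compatibility of infinitesimal parameters under $\phi_\star$ furnished by Lemma \ref{rho}, to the classification of finite‑dimensional irreducibles by their highest weights.
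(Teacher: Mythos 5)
Your proof is correct and follows essentially the same route as the paper: the finite-dimensional irreducible with highest weight $\lambda_1$ is pinned down by the datum $\lambda_1+\check{\rho}_{\G_1}$ (its parameter of finite-dimensional type), and Lemma \ref{rho} transports this to $\phi_\star(\lambda_1)+\check{\rho}_{\G_2}$, which is the entire content of the paper's two-line argument. Your step (a) is the one genuine addition: the paper tacitly assumes that $\phi_\star(\lambda_1)$ is dominant, whereas you verify it correctly via the dynamic parabolic $P(\mu)$ together with the uniqueness of the Borel containing a regular unipotent (in the paper this point is only implicit, and can alternatively be extracted from the later observation, in the proof of Lemma \ref{regular-unipotent}, that simple roots of $\wG_2$ pull back to simple roots of $\wG_1$). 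The only cosmetic discrepancy is that you write the $W_\C$-restriction as $z\mapsto(z\bar z)^{\lambda_1+\check{\rho}_{\G_1}}$ while the paper writes $z\mapsto z^{\lambda_1+\check{\rho}_{\G_1}}$, a difference of convention (viewing $F_{\lambda_1}$ through $\sG_1(\R)$ versus the complex group) that does not affect the mechanism of the argument.
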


\begin{example} For the mapping of the dual groups $\Sp_{2n}(\C) \subset \GL_{2n}(\C)$, the corresponding mapping between finite dimensional highest weight modules, takes irreducible highest weight module
  $V_{\underline{\lambda_1}}$ of $\SO(2n+1)(\C)$
  where $$\underline{\lambda_1} = \{\lambda_1 \geq  \lambda_2 \geq  \cdots \geq  \lambda_n \geq 0\},$$
  to the irreducible highest weight module $V_{\underline{\lambda_2}}$ of $\GL_{2n}(\C)$:
  $$\underline{\lambda_2} = \{\lambda_1 \geq  \lambda_2 \geq  \cdots \geq  \lambda_n  \geq -\lambda_n \geq \cdots \geq -\lambda_1 \}.$$
  \end{example}

\begin{remark}Throughout the paper, when  talking of $\phi: {}^L\sG_1 \rightarrow {}^L\sG_2$ taking cohomological representations of $\sG_1(\R)$ to cohomological representations of $\sG_2(\R)$, we always mean that their coefficient systems are irreducible 
    representations of  $\sG_1(\C)$ and $\sG_2(\C)$
        which are related as in Proposition \ref{hw}.
\end{remark}

  \begin{prop} \label{inf-ch}
    Let
$\phi: {}^L\sG_1 \rightarrow {}^L\sG_2$ 
    be a homomorphism of $L$-groups whose restriction to the dual groups is also denoted by the same symbol
    $\phi: (\wG_1, \wB_1,\wT_1) \rightarrow (\wG_2,\wB_2,\wT_2)$, and which  
  takes a regular unipotent element in ${\wG}_1$ to a regular unipotent element in $\wG_2$.
    Then  $\phi: \wG_1 \rightarrow \wG_2$ 
    takes a Langlands parameter $\lambda: W_\R \rightarrow {}^L{\sG_1}$
    with the infinitesimal character of  a finite dimensional  irreducible
representation of $\sG_1(\C)$
with highest weight $\lambda_1 \in  X^\star(\sT_1) = X_\star(\wT_1)$ to 
the Langlands parameter $\phi\circ \lambda: W_\R \rightarrow {}^L{\sG_2}$
 with the infinitesimal character of  a finite dimensional  irreducible
representation of $\sG_2(\C)$
with highest weight $\lambda_2=\phi_\star(\lambda_1) $ where $\phi_\star: X_\star(\wT_1) \rightarrow X_\star(\wT_2)$ is
induced from the mapping of tori $\wT_1 \stackrel{\phi}\rightarrow \wT_2$. The same is true also of
$A$-parameters $\Lambda: W_\R\times \SL_2(\C) \rightarrow {}^L{\sG_1}$.
  \end{prop}
  \begin{proof}
    By Lemma \ref{inf},
    the restriction of  the  Langlands parameter $\lambda: W_\R \rightarrow {}^L{\sG_1}$
to $\C^\times \subset W_\R$ is of the form  (for some $\mu_1 \in X_\star(\wT_1)$):
\[ z\rightarrow z^{\lambda_1+  \check{\rho}_{\wG_1}}\bar{z}^{ \mu_1},\]
and therefore composing with $\phi: (\wG_1, \wB_1,\wT_1) \rightarrow (\wG_2,\wB_2,\wT_2)$, and
using Lemma \ref{rho} and Proposition \ref{hw}, we find that the restriction of the
Langlands parameter $\phi\circ \lambda: W_\R \rightarrow {}^L{\sG_2}$ to
$\C^\times \subset W_\R$ is of the form:
\[ z\rightarrow z^{\lambda_2+  \check{\rho}_{\wG_2}}\bar{z}^{ \phi_\star(\mu_1)},\]
therefore we are done by Lemma \ref{inf}.   \end{proof}
  
\section{Desiderata on cohomological representations} \label{desiderata}
The aim of this section is to relate cohomological representations of real reductive groups which differ only through their centers; a good example to 
keep in mind are the groups $\GL_n(\R), \PGL_n(\R), \SL_n(\R)$ which differ only through their centers,
and to assert --- what was already known to everyone --- that for 
matters of cohomology, there is no difference
in using one's favorite group in the isogeny class, which for our paper is semi-simple simply connected group,
for example, because then $\sG(\R)$ is connected.

\begin{lemma} Let $G$, a real Lie group, be an almost direct product of Lie subgroups $G_s$ and $Z$, with $Z$  a central subgroup of $G$,
thus with $G_s \cap Z$ a finite central subgroup in $G$. Similarly, let
$K_0$ be an almost direct product of $K_s^0$ and $K^0_Z$ where $K_0$ is the connected component of identity of $K$, a maximal compact subgroup of $G$, and  
$K_s^0, K_Z^0$ are connected components of identity of  the maximal compacts $K \cap G_s$ and $K\cap Z$
inside $G_s$ and $Z$ respectively. Write the corresponding Lie algebras as direct sum $\mathfrak{g} = \mathfrak{g}_s+ 
\mathfrak{z}$ and  $\mathfrak{k} = \mathfrak{k}_s+ \mathfrak{k}_{\mathfrak z}$. 
Then for a representation $\pi$ of $(\mathfrak{g},K)$ on which $\mathfrak{z}$ operates by the same scalar as it does on a finite
dimensional representation $V$ of $G$, 
$$H^*(\mathfrak{g}, \gk, \pi \otimes V^\vee) \cong H^*(\mathfrak{g}_s, \gk_s, \pi \otimes V^\vee) \otimes H^*(\mathfrak{z}, \mathfrak{k}_{\mathfrak z}, \C),$$
and therefore since  $H^0(\mathfrak{z}, \mathfrak{k}_{\mathfrak z}, \C) \not = 0$,
$$ H^*(\mathfrak{g}, K_0, \pi \otimes V^\vee) \not = 0  \iff H^*(\mathfrak{g}_s, \gk_s, \pi \otimes V^\vee) \not = 0.$$
 
\end{lemma}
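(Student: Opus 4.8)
The plan is to compute both sides with the standard Chevalley--Eilenberg complex and then recognise a K\"unneth decomposition. Recall that for any $(\mathfrak{g},K)$-module $M$ the cohomology $H^\star(\mathfrak{g},\mathfrak{k},M)$ is the cohomology of the complex $C^\bullet(M)=\Hom_{\mathfrak{k}}\big(\Lambda^\bullet(\mathfrak{g}/\mathfrak{k}),M\big)$ with its usual differential, and that $H^\star(\mathfrak{g},K_0,M)=H^\star(\mathfrak{g},\mathfrak{k},M)$ since $K_0$ is connected and compact, so $\mathfrak{k}$-invariance computes $K_0$-invariance. The only place the hypothesis on $\pi$ enters is the observation that since $\mathfrak{z}$ acts on $\pi$ by the same character as on $V$, it acts on $M:=\pi\otimes V^\vee$ by zero; equivalently $M$ is a $(\mathfrak{g}_s,K_s^0)$-module inflated to $(\mathfrak{g},K)$ with $\mathfrak{z}$, and hence $\mathfrak{k}_{\mathfrak{z}}$, acting trivially.

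Next I would use the Lie algebra decompositions $\mathfrak{g}=\mathfrak{g}_s\oplus\mathfrak{z}$ and $\mathfrak{k}=\mathfrak{k}_s\oplus\mathfrak{k}_{\mathfrak{z}}$ (genuine direct sums, the groups being almost direct products), which give $\mathfrak{g}/\mathfrak{k}\cong \mathfrak{g}_s/\mathfrak{k}_s\oplus \mathfrak{z}/\mathfrak{k}_{\mathfrak{z}}$ and therefore $\Lambda^\bullet(\mathfrak{g}/\mathfrak{k})\cong \Lambda^\bullet(\mathfrak{g}_s/\mathfrak{k}_s)\otimes\Lambda^\bullet(\mathfrak{z}/\mathfrak{k}_{\mathfrak{z}})$ as graded $\mathfrak{k}$-modules. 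Since $\mathfrak{z}$ is central, $\mathfrak{k}_{\mathfrak{z}}$ acts trivially on $\mathfrak{g}_s/\mathfrak{k}_s$ and, being abelian, on $\mathfrak{z}/\mathfrak{k}_{\mathfrak{z}}$ too, while $\mathfrak{k}_s$ acts trivially on $\mathfrak{z}/\mathfrak{k}_{\mathfrak{z}}$; combining this with the triviality of the $\mathfrak{k}_{\mathfrak{z}}$-action on $M$, one obtains a natural isomorphism of graded vector spaces
\[ C^\bullet(M)\;\cong\;\Hom_{\mathfrak{k}_s}\big(\Lambda^\bullet(\mathfrak{g}_s/\mathfrak{k}_s),M\big)\otimes\Hom\big(\Lambda^\bullet(\mathfrak{z}/\mathfrak{k}_{\mathfrak{z}}),\C\big), \]
the second factor being exactly the complex whose cohomology is $H^\star(\mathfrak{z},\mathfrak{k}_{\mathfrak{z}},\C)$ (its differential vanishes because $\mathfrak{z}$ is abelian, so this cohomology is $\Lambda^\bullet(\mathfrak{z}/\mathfrak{k}_{\mathfrak{z}})^\vee$, nonzero in degree $0$).

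The step with actual content is to check that the Chevalley--Eilenberg differential of $C^\bullet(M)$ becomes, under this identification, the tensor-product differential $d_s\otimes 1+(-1)^\bullet\,1\otimes d_{\mathfrak{z}}$, where $d_s$ is the differential of the $(\mathfrak{g}_s,\mathfrak{k}_s;M)$-complex and $d_{\mathfrak{z}}$ that of the $(\mathfrak{z},\mathfrak{k}_{\mathfrak{z}};\C)$-complex. There are two potential cross terms, and both vanish: the bracket terms mixing an argument in $\mathfrak{g}_s/\mathfrak{k}_s$ with one in $\mathfrak{z}/\mathfrak{k}_{\mathfrak{z}}$ die because $[\mathfrak{g}_s,\mathfrak{z}]=0$, and the terms in which an argument from $\mathfrak{z}$ acts on the coefficient module die because $\mathfrak{z}$ acts trivially on $M$. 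This bookkeeping is the main, if routine, obstacle, and it is precisely where centrality of $\mathfrak{z}$ and the hypothesis on the central character of $\pi$ are both consumed; after it, the algebraic K\"unneth theorem over the field $\C$ (no Tor terms) yields $H^\star(C^\bullet(M))\cong H^\star(\mathfrak{g}_s,\mathfrak{k}_s,M)\otimes H^\star(\mathfrak{z},\mathfrak{k}_{\mathfrak{z}},\C)$, which is the asserted isomorphism — alternatively one may simply quote the K\"unneth formula of Borel--Wallach, Ch.~I. Finally, since $H^0(\mathfrak{z},\mathfrak{k}_{\mathfrak{z}},\C)=\C\neq 0$, the graded tensor product is nonzero in some degree if and only if $H^\star(\mathfrak{g}_s,\mathfrak{k}_s,M)$ is, and translating back through $H^\star(\mathfrak{g},K_0,-)=H^\star(\mathfrak{g},\mathfrak{k},-)$ gives the stated equivalence.
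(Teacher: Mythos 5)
Your proof is correct and follows essentially the same route as the paper, which simply invokes the K\"unneth theorem of Borel--Wallach (Ch.~I) together with the observation that $H^0(\mathfrak{z},\mathfrak{k}_{\mathfrak z},\C)\neq 0$; you merely make the K\"unneth splitting explicit at the level of the relative Chevalley--Eilenberg complex, using centrality of $\mathfrak{z}$ and the vanishing of its action on $\pi\otimes V^\vee$, which is exactly where the hypotheses are consumed.
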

\begin{proof} 
The isomorphism  $H^*(\mathfrak{g}, \gk, \pi \otimes V^\vee) \cong H^*(\mathfrak{g}_s, \gk_s, \pi \otimes V^\vee) \otimes H^*(\mathfrak{z}, \gk_{\mathfrak z}, \C)$ is a consequence 
of Kunneth theorem, cf. [I, 1.3] of \cite{BW}. 
 Non-vanishing of $H^0(\mathfrak{z}, \gk_{\mathfrak z}, \C),$ follows from the definition of relative Lie algebra cohomology.
\end{proof}

\begin{lemma} 
 Let $f: G_1 \rightarrow G_2$ be a homomorphism of Lie groups with finite kernel and finite co-kernel. Assume 
 that $K_1$ and $K_2$ are maximal compact subgroups of $G_1$ and $G_2$ such that $f(K_1) \subset K_2$. Let $K_{1,0}$ and $K_{2,0}$ be the connected
 components of their identity elements with their complexified Lie algebras $\gk_1$ and $\gk_2$ respectively. Let $\pi_2$ be a  $(\g_2, K_2)$ module, and $V_2$ 
 a finite dimensional representation of $G_2$ with the same central and infinitesimal characters as $\pi_2$.
 Let $\pi_1= f^*(\pi_2)$ be the restriction of $\pi_2$ via $f$, giving rise to a $(\g_1, K_1)$ module 
  and $V_1 = f^*(V_2)$ the corresponding representation of $G_1$. Then, 
 $$ H^*(\mathfrak{g}_2, \gk_2, \pi_2 \otimes V^\vee_2 ) \cong  H^*(\mathfrak{g}_1, \gk_1, \pi_1 \otimes V_1^\vee).$$
 \end{lemma}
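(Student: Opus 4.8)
The plan is to observe that the hypotheses force $df\colon\g_1\to\g_2$ to be an isomorphism of (complexified) Lie algebras carrying $\gk_1$ onto $\gk_2$, so that both sides of the asserted identity are the cohomology of literally isomorphic Chevalley--Eilenberg complexes. First I would record that a finite kernel gives $\ker(df)=\mathrm{Lie}(\ker f)=0$, while a finite cokernel forces $\dim G_1=\dim G_2$ (the image $f(G_1)$ is an immersed subgroup of dimension $\dim G_1-\dim\ker f=\dim G_1$, and for it to have finite index in $G_2$ this must equal $\dim G_2$); hence $df$ is an injective map between real Lie algebras of equal dimension, i.e. an isomorphism, and so is its complexification $\g_1\xrightarrow{\sim}\g_2$.

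Next I would check that $df(\gk_1)=\gk_2$. The inclusion $df(\gk_1)\subseteq\gk_2$ is immediate on differentiating $f$ along $K_{1,0}$, using $f(K_1)\subseteq K_2$. For equality, note that a Lie algebra isomorphism carries one Cartan decomposition to another, so $\dim\mathfrak{p}_1=\dim\mathfrak{p}_2$ and therefore $\dim\gk_1=\dim\g_1-\dim\mathfrak{p}_1=\dim\g_2-\dim\mathfrak{p}_2=\dim\gk_2$; since $df$ is injective, $df(\gk_1)$ is a subspace of $\gk_2$ of the same dimension, hence all of $\gk_2$. In particular $df$ induces an isomorphism $\g_1/\gk_1\xrightarrow{\sim}\g_2/\gk_2$.

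Then I would identify the modules. By definition $\pi_1=f^{\star}\pi_2$ has $X\in\g_1$ acting through $df(X)\in\g_2$ and $\gk_1$ acting through $df|_{\gk_1}$, so the underlying $(\g_1,\gk_1)$-module of $\pi_1$ is exactly the pullback along the isomorphism $df$ of the underlying $(\g_2,\gk_2)$-module of $\pi_2$; likewise $V_1^{\vee}=f^{\star}(V_2^{\vee})=(f^{\star}V_2)^{\vee}$, so $\pi_1\otimes V_1^{\vee}$ is the $df$-pullback of $\pi_2\otimes V_2^{\vee}$. Transporting along $df$ the Chevalley--Eilenberg complex $\bigl(\Lambda^{\bullet}(\g_2/\gk_2)^{*}\otimes\pi_2\otimes V_2^{\vee}\bigr)^{\gk_2}$, whose differential is built canonically from the bracket on $\g_2$ and the module action, yields an isomorphism of complexes with $\bigl(\Lambda^{\bullet}(\g_1/\gk_1)^{*}\otimes\pi_1\otimes V_1^{\vee}\bigr)^{\gk_1}$ compatible with differentials, and passing to cohomology gives $H^{\star}(\g_2,\gk_2,\pi_2\otimes V_2^{\vee})\cong H^{\star}(\g_1,\gk_1,\pi_1\otimes V_1^{\vee})$.

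I do not expect a serious obstacle here: the only slightly delicate point is the equality $df(\gk_1)=\gk_2$ rather than mere containment, which is exactly where finiteness of the cokernel enters (through equality of dimensions and rigidity of the Cartan decomposition under a Lie algebra isomorphism). The hypothesis that $V_2$ has the same central and infinitesimal character as $\pi_2$ is not needed for the isomorphism itself (by Wigner's lemma both sides vanish otherwise); it merely records the case of interest. Finally, since the cohomology in the statement is relative Lie algebra cohomology, only the data $(\g_i,\gk_i)$ and the $(\g_i,\gk_i)$-module structure enter, so the possible disconnectedness of $K_1$ and $K_2$ plays no role.
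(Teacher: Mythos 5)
Your proposal is correct and follows exactly the route the paper takes: the paper's proof simply notes that the hypotheses give isomorphisms $\g_1\cong\g_2$ and $\gk_1\cong\gk_2$ under which $\pi_1\otimes V_1^\vee$ is identified with $\pi_2\otimes V_2^\vee$, "so there is nothing to be proved." You have merely supplied the details the paper leaves implicit (finite kernel/cokernel forcing $df$ to be an isomorphism, equality $df(\gk_1)=\gk_2$ via conjugacy of Cartan decompositions, and transport of the relative Chevalley--Eilenberg complex), all of which are sound in the reductive setting intended here.
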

\begin{proof} Under the conditions on $f: G_1 \rightarrow G_2$, we have an isomorphism of Lie algebras $\g_1 \cong \g_2$, and an isomorphism
$f:  \gk_1 \rightarrow \gk_2$, and $\pi_2 \otimes V^\vee_2$ is isomorphic to $\pi_1 \otimes V^\vee_1$ so there is nothing to be proved! 
 \end{proof}

 \begin{lemma} Let $\sG$ be a real reductive group with $Z$ its center. Then the action of the adjoint group $(\sG/\mathsf{Z})(\R)$ on $\sG(\R)$ takes irreducible 
 cohomological representations of $\sG(\R)$ to irreducible cohomological representations of $\sG(\R)$.
   \end{lemma}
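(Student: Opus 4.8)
The plan is to realize the action of an element $g\in(\sG/\mathsf{Z})(\R)$ on $\sG(\R)$ as an automorphism of the real Lie group $\sG(\R)$ to which the preceding lemma (on homomorphisms with finite kernel and cokernel) applies, and to check that this automorphism fixes the coefficient system up to isomorphism. Write $\alpha_g=\Ad(g)\colon\sG(\R)\to\sG(\R)$ for conjugation by $g$; this is a well-defined automorphism because $\mathsf{Z}$ acts trivially by conjugation on $\sG$. The first thing I would record is that $\alpha_g$ is \emph{inner} on $\sG(\C)$: since $\sG(\C)\to(\sG/\mathsf{Z})(\C)$ is surjective, pick a lift $\tilde g\in\sG(\C)$ of $g$, so that on $\sG(\C)$ the automorphism $\alpha_g$ is conjugation by $\tilde g$. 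Hence for any finite-dimensional representation $(V,\rho)$ of $\sG(\C)$ the pullback $V^{\alpha_g}:=V\circ\alpha_g$ is isomorphic to $V$, with intertwiner $\rho(\tilde g)\colon V\to V^{\alpha_g}$, and likewise $(V^{\alpha_g})^\vee\cong V^\vee$. The second thing is that $\alpha_g$ is an isomorphism of Lie groups, in particular has finite kernel and cokernel, so the preceding lemma applies to $f=\alpha_g$.

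Next I would run the transport of structure. Fix a maximal compact $K\subset\sG(\R)$ with complexified Lie algebra $\gk$, put $K'=\alpha_g(K)$ (again a maximal compact, with complexified Lie algebra $\gk'$), and let $\pi$ be an irreducible cohomological representation of $\sG(\R)$, so that $H^i(\g,\gk,\pi\otimes V^\vee)\neq0$ for some finite-dimensional irreducible $V$ and some $i$. Applying the preceding lemma with $G_1=G_2=\sG(\R)$, $f=\alpha_g$, $K_1=K$, $K_2=K'$, $\pi_2=\pi$ and $V_2=V$ — the hypotheses on central and infinitesimal characters being part of what it means for $\pi$ to be cohomological with coefficient system $V$ — gives an isomorphism
\[
H^i\bigl(\g,\gk',\,\pi\otimes V^\vee\bigr)\;\cong\;H^i\bigl(\g,\gk,\,\pi^{\alpha_g}\otimes(V^{\alpha_g})^\vee\bigr),
\]
where $\pi^{\alpha_g}=\pi\circ\alpha_g$ is exactly the translate of $\pi$ by $g$. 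Now I would substitute $(V^{\alpha_g})^\vee\cong V^\vee$ from the first paragraph, and use the standard independence of $(\g,\gk)$-cohomology on the choice of maximal compact (all maximal compacts of $\sG(\R)$ being conjugate under $\sG(\R)$) to identify the left-hand side with $H^i(\g,\gk,\pi\otimes V^\vee)\neq0$. Thus $H^i(\g,\gk,\pi^{\alpha_g}\otimes V^\vee)\neq0$, i.e.\ $\pi^{\alpha_g}$ is cohomological with the same coefficient system $V$.

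Finally I would note that $\pi^{\alpha_g}$ is irreducible, since precomposition with the automorphism $\alpha_g$ is an autoequivalence of the category of $\sG(\R)$-modules, and is unitary on restriction to the derived subgroup, since $[\sG,\sG]$ is characteristic (so $\alpha_g$ preserves $[\sG,\sG](\R)$) and precomposing a unitary representation with a continuous automorphism keeps it unitary. I do not anticipate a real obstacle in any of this; the one point that genuinely has to be got right — and that dictates why the statement is about the \emph{adjoint} group and not about arbitrary automorphisms — is that $\alpha_g$ is inner on $\sG(\C)$, which is what guarantees the coefficient system is unchanged. The remaining care is purely bookkeeping: $\alpha_g$ moves the maximal compact, and this is absorbed by the conjugacy of maximal compacts inside $\sG(\R)$.
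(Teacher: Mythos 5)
Your argument is correct, but it gets there by a different mechanism than the paper. The paper's proof first observes that the action of $(\sG/\mathsf{Z})(\R)$ on isomorphism classes of representations of $\sG(\R)$ factors through the component group of the adjoint group, which is already exhausted by a maximal compact subgroup $K^{\rm ad}$ containing the image of $K$; one may therefore take the acting element inside $K^{\rm ad}$, and conjugation by such an element normalizes $K_0$ (it commutes with the Cartan involution), so that preservation of $(\g,K_0)$-cohomology is immediate, with no change of maximal compact ever occurring. You instead work with an arbitrary element $g$, apply the preceding transport-of-structure lemma with $f=\alpha_g$, and then repair the displacement of the maximal compact by appealing to conjugacy of maximal compacts in $\sG(\R)$. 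That last identification is indeed standard but is the one point where a word is needed: for $K'=hKh^{-1}$ the operator $\pi(h)$, defined on the continuous representation, carries $K$-finite vectors to $K'$-finite vectors and, together with $\Ad(h)$ on $\g/\gk$ and the action of $h$ on $V^\vee$, identifies the two relative Lie algebra complexes; this uses that $\pi$ is a genuine representation of $\sG(\R)$ and not merely an abstract $(\g,K_0)$-module, which is the case here. What your route buys is an explicit verification that the coefficient system is unchanged, via the observation that $\alpha_g$ is inner on $\sG(\C)$ (surjectivity of $\sG(\C)\to(\sG/\mathsf{Z})(\C)$); the paper leaves this implicit, and with its definition of cohomological (existence of some coefficient system) it is not strictly required. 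What the paper's route buys is economy: by arranging the conjugating element to lie in $K^{\rm ad}$, it never has to compare cohomology taken with respect to two different maximal compacts.
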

\begin{proof} Let $K$ be a maximal compact subgroup of $\sG(\R)$, and $K^{\rm ad}$ a maximal compact subgroup of $(\sG/\mathsf{Z})(\R)$ containing the image of 
$K$ under the natural map $\sG(\R) \rightarrow (\sG/\mathsf{Z})(\R)$. 
Note  that the action of the adjoint group $(\sG/\mathsf{Z})(\R)$ on isomorphism classes of representations of 
$\sG(\R)$, which is through the group of connected components of the adjoint group $(\sG/\mathsf{Z})(\R)$,
can be realized through the action of $K^{\rm ad}$ on   $\sG(\R)$ since a maximal compact subgroup intersects all connected components, and hence $K^{\rm ad}$ surjects on
the group of connected components of the adjoint group $(\sG/\mathsf{Z})(\R)$. The proof now follows since $K^{\rm ad}$ normalizes $K_0$, and therefore takes 
cohomological representations of $(\g,\gk_0)$ to cohomological representations of $(\g,\gk_0)$.   
 \end{proof}

The previous three lemmas are combined to give the following proposition.
\begin{prop} \label{1}
 Let $f: \sG_1 \rightarrow \sG_2$ be a homomorphism of reductive algebraic groups over $\R$ which we assume is an isogeny restricted to their derived subgroups. Then
 an irreducible representation $\pi_2$ of $\sG_2(\R)$ is cohomological for coefficients $V_2$, an irreducible
 representation of $\sG_2(\C)$,  if and only if its restriction $\pi_1 = f^*(\pi_2)$ to  $\sG_1(\R)$ is cohomological
 for the coefficients $V_1=f^*(V_2)$ of $\sG_1(\C)$. Further,  
 if $\pi_1= f^*(\pi_2) $ is a  cohomological representation of $\sG_1(\R)$, any irreducible summand of $\pi_1$ is cohomological too.  
 
\end{prop}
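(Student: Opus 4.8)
The plan is to deduce Proposition \ref{1} by bootstrapping from the three preceding lemmas, treating the two directions of the equivalence separately and then handling the summand statement.

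First I would reduce the isogeny $f\colon\sG_1\to\sG_2$ on derived groups to a manageable shape. Write $\sG_i = [\sG_i,\sG_i]\cdot Z_i$ as an almost direct product, where $Z_i$ is (the connected component of) the center; the hypothesis is exactly that $f$ restricts to an isogeny $[\sG_1,\sG_1]\to[\sG_2,\sG_2]$. The first Lemma lets me strip off the central factor on each side: for $\pi_i$ with central character matching a finite-dimensional $V_i$, nonvanishing of $(\g_i,K_{i,0})$-cohomology with coefficients in $V_i^\vee$ is equivalent to nonvanishing of $(\mathfrak{g}_{i,s},\gk_{i,s})$-cohomology, where $\mathfrak{g}_{i,s}$ is the Lie algebra of the derived group. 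So the whole question is transported to the semisimple derived groups, which are related by an honest isogeny, i.e. a homomorphism of Lie groups with finite kernel and finite cokernel. The second Lemma then applies verbatim to $[\sG_1,\sG_1](\R)\to[\sG_2,\sG_2](\R)$ (after checking compatibility of maximal compacts, which one arranges by choosing $K_1$ inside the preimage of $K_2$), giving the isomorphism of $(\g,K)$-cohomology in both the coefficient system and the representation. Combining, $\pi_2$ is cohomological with coefficients in $V_2$ iff $\pi_1 = f^\star(\pi_2)$ is cohomological with coefficients in $V_1 = f^\star(V_2)$; and conversely one must note that every coefficient system for $\sG_1$ with a cohomological representation descends from (or is compatible with) one for $\sG_2$ up to the central twist, which is where the first Lemma's central-character bookkeeping is needed again.

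The remaining point is the summand statement: if $\pi_1 = f^\star(\pi_2)$ is cohomological then any irreducible summand $\pi_1'\subset\pi_1$ is cohomological. Here I would use the third Lemma. Since $f$ is an isogeny on derived groups, $\sG_1(\R)$ and $\sG_2(\R)$ have the same adjoint group up to finite index, and the restriction $f^\star(\pi_2)$ decomposes into finitely many irreducibles that are permuted transitively by the component group of the adjoint group acting through $K^{\mathrm{ad}}$; the third Lemma says this action preserves cohomologicality. Concretely, $\pi_1 = \bigoplus_j \pi_1^{(j)}$ with the $\pi_1^{(j)}$ a single orbit under $(\sG_1/Z_1)(\R)$, so $H^\star(\g_1,\gk_{1,0},\pi_1\otimes V_1^\vee) = \bigoplus_j H^\star(\g_1,\gk_{1,0},\pi_1^{(j)}\otimes V_1^\vee)$, and the summands are all isomorphic; nonvanishing of the total forces nonvanishing of each, hence every summand is cohomological.

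I expect the main obstacle to be purely bookkeeping rather than conceptual: matching up maximal compact subgroups and their identity components compatibly across $f$ (so that the hypotheses of Lemmas 2 and 3 are literally satisfied), and tracking the central/infinitesimal characters so that the finite-dimensional coefficient systems on the two sides correspond correctly under $f^\star$ — in particular verifying that a coefficient system witnessing cohomologicality on $\sG_1$ is pulled back from one on $\sG_2$, rather than merely agreeing on the derived group. Once those compatibilities are pinned down, the proof is just the concatenation of the three lemmas plus the elementary observation that a direct sum of cohomology groups vanishes iff each summand does.
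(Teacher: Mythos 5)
Your proposal is correct and follows essentially the same route as the paper: the equivalence is obtained by stripping off the central factor (first lemma) and then invoking the isogeny lemma on the derived groups (second lemma), and the summand statement is handled by observing that the irreducible constituents of $f^\star(\pi_2)$ are conjugate under $\sG_2(\R)$ (Clifford theory, realized through the adjoint group) together with the third lemma, which is precisely the paper's argument. One small wording correction: the constituents $\pi_1^{(j)}$ are conjugate but need not be isomorphic as representations of $\sG_1(\R)$; what your argument actually uses (and what suffices) is that conjugation preserves cohomologicality and identifies the corresponding cohomology groups, so nonvanishing of one summand forces nonvanishing of all.
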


\begin{proof}We only need to prove the assertion at the end that if $\pi_1$ as in the proposition is cohomological, any irreducible summand of $\pi_1$ is cohomological too.
Since $f(\sG_1(\R))$ is a normal subgroup of $\sG_2(\R)$, all the irreducible constituents of $\pi_1$ are conjugate under the action of $\sG_2(\R)$ --- this is the so-called Clifford theory. 
Now the proposition follows from the previous lemma.
\end{proof}

The following proposition proves that the {\it restriction problem} has the natural answer for $L$-parameters. This proposition is originally in Langlands's article \cite{La},
item
$(iv)$ on page 23, section 3. 

\begin{prop} \label{2}
 Let $f: \sG_1 \rightarrow \sG_2$ be a homomorphism of reductive algebraic groups over $\R$ which we assume is an isogeny restricted to their derived subgroups. Then
 for an irreducible representation $\pi_2$ of $\sG_2(\R)$ its restriction $\pi_1 = f^*(\pi_2)$ to  $\sG_1(\R)$ is a finite direct sum of irreducible 
 representations of $\sG_1(\R)$ which all belong to one $L$-packet whose $L$-parameter is obtained from that of $\pi_2$ through the homomorphism
 of $L$-groups: $f: {}^L {\sG}_2 \rightarrow {}^L {\sG}_1$. 
\end{prop}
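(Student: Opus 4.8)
The plan is to split the statement into the elementary assertion that $\pi_1=f^\star(\pi_2)$ is a \emph{finite} direct sum of irreducibles and the substantive assertion that every summand has $L$-parameter $\phi:={}^Lf\circ\phi_{\pi_2}$, and to prove the latter by reducing, via the Langlands-quotient formalism, to essentially square-integrable representations — this being in essence Langlands's own argument in \cite{La}, \S3. For finiteness: since $f$ is an isogeny on derived subgroups we have $f(\sG_1)\supseteq[\sG_2,\sG_2]$, hence $f(\sG_1)\cdot Z(\sG_2)^{\circ}=\sG_2$ as algebraic groups, so the subgroup $\sG_2(\R)^{\dagger}$ of $\sG_2(\R)$ generated by $f(\sG_1(\R))$ and $Z(\sG_2)(\R)^{\circ}$ is open of finite index, hence normal. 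Clifford theory for the finite group $\sG_2(\R)/\sG_2(\R)^{\dagger}$ then shows $\pi_2|_{\sG_2(\R)^{\dagger}}$ is a finite direct sum of $\sG_2(\R)$-conjugates of a single irreducible with constant multiplicity; restricting further along $\sG_1(\R)\hookrightarrow\sG_2(\R)^{\dagger}$ preserves irreducibility of the constituents, since the central torus $Z(\sG_2)(\R)^{\circ}$ acts by a scalar on each irreducible admissible representation. Thus $\pi_1$ is a finite sum of irreducibles, pairwise $\sG_2(\R)$-conjugate.

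For the parameter, I would write $\pi_2$ as a Langlands quotient $J(\sP_2,\delta_2,\nu_2)$ with $\sP_2=\sM_2\sN_2$, put $\sP_1:=f^{-1}(\sP_2)$ — a parabolic of $\sG_1$ with Levi $\sM_1:=f^{-1}(\sM_2)$, with $f|_{\sM_1}\colon\sM_1\to\sM_2$ again an isogeny on derived subgroups — note that $\sG_1$ acts transitively on the flag variety $\sG_2/\sP_2$, and then use Mackey theory together with the compatibility of $L$-parameters with parabolic induction and the Langlands quotient (on the dual side: the parameter of $J(\sP_2,\delta_2,\nu_2)$ is that of $\delta_2\otimes\nu_2$ transported through ${}^L\sM_2\hookrightarrow{}^L\sG_2$, and ${}^Lf$ respects the inclusions ${}^L\sM_i\hookrightarrow{}^L\sG_i$) to reduce first to $\pi_2$ essentially tempered and then to $\pi_2$ essentially square-integrable modulo the centre of $\sG_2$. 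In that case any constituent $\tau$ of $\pi_2|_{\sG_1(\R)}$ is again essentially square-integrable modulo $Z(\sG_1)(\R)$, and its $L$-packet is determined by its infinitesimal and central characters. Both transport correctly: the infinitesimal character of $\tau$ is the image of that of $\pi_2$ under $(df)^{\ast}$, where $df\colon\t_1\to\t_2$ is induced by $f$ on Lie algebras of maximal tori, and $(df)^{\ast}$ is precisely the map $X_\star(\wT_2)\otimes\C\to X_\star(\wT_1)\otimes\C$ underlying ${}^Lf$, so by Lemma \ref{inf} it agrees with the infinitesimal character of $\phi$; and the central character of $\tau$ is the restriction of that of $\pi_2$ along $Z(\sG_1)(\R)\to Z(\sG_2)(\R)$, which corresponds to $\phi$ by the (manifest) functoriality of the local Langlands correspondence for tori recalled in Section \ref{prelim}. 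Hence $\phi_{\tau}=\phi$; since this identity is invariant under $\sG_2(\R)$-conjugacy, all summands of $\pi_1$ carry the parameter $\phi$ and so lie in one $L$-packet. For $A$-parameters one uses that ${}^Lf$ carries an $A$-parameter $\psi_2$ of $\sG_2$ to an $A$-parameter ${}^Lf\circ\psi_2$ of $\sG_1$ whose underlying $L$-parameter is ${}^Lf\circ\phi_{\psi_2}$ (the diagonal map $\C^\times\to\C^\times\times\SL_2(\C)$ being canonical), and that the passage from $A$-parameters to $L$-parameters is injective.

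The hard part will be the reduction step: controlling the interaction of $f^{\star}$ with parabolic induction over $\R$, in particular the orbit structure of $\sG_1(\R)$ on the real points of $\sG_2/\sP_2$ and the bookkeeping needed to see that the Langlands data upstairs and downstairs match under ${}^Lf$. This is precisely the analysis Langlands carries out in \cite{La}, \S3 (item $(iv)$, p.~23), which I would either reproduce in the generality needed here or simply invoke.
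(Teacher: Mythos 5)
Your treatment of the $L$-packet statement follows essentially the same route as the paper: finiteness of the restriction (which the paper simply declares easy; your Clifford-theory argument via the open, finite-index normal subgroup generated by $f(\sG_1(\R))$ and the connected centre is a perfectly adequate way to see it), then reduction through the Langlands quotient and tempered formalism to the essentially square-integrable case, where the packet is pinned down by matching infinitesimal and central characters, both of which visibly transport through ${}^Lf$. This is exactly the argument the paper sketches: it reduces to discrete series using Lemma \ref{induced}, appeals to section 4 of \cite{AP} for the details of climbing back through tempered representations and Langlands quotients, and points to Langlands \cite{La}, \S3(iv), as the original source --- the same reference you propose to invoke for the reduction step. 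For this part the two proofs differ only in how much detail is displayed.

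The genuine divergence, and the place where your proposal falls short of what the paper proves, is the $A$-parameter assertion. You argue purely formally: ${}^Lf\circ\psi_2$ is an $A$-parameter whose associated $L$-parameter is ${}^Lf\circ\phi_{\psi_2}$, and the passage from $A$-parameters to $L$-parameters is injective. That only identifies which $A$-parameter is compatible with the transported $L$-parameter; it does not show that the irreducible constituents of $\pi_1$ actually lie in the $A$-packet attached to ${}^Lf\circ\psi_2$, which is the content the paper needs when it later moves between $A$-packets on a group and on the simply connected cover of its derived group (e.g.\ in the proof of Theorem \ref{main}). Since a representation can belong to an $A$-packet without its own $L$-parameter being the one associated to that $A$-parameter, membership in an $A$-packet cannot be read off from parameters alone. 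The paper settles this point using the characterization of $A$-packets via stability recalled in Section \ref{prelim}: restriction along $\sG_1(\R)\rightarrow\sG_2(\R)$ carries stable distributions on $\sG_2(\R)$ to stable distributions on $\sG_1(\R)$, and conversely a distribution on $\sG_2(\R)$ whose restriction is stable was already stable. Some argument of this kind is the ingredient your proposal would need to add to cover the $A$-packet half of the proposition.
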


\begin{proof} It is easy to see that the restriction $\pi_1 = f^*(\pi_2)$ of $\pi_2$ to  $\sG_1(\R)$ is a finite direct sum of irreducible representations of $\G_1(\R)$. 
Next we note  that the assertions in the proposition hold when $\pi_2$ is a discrete series representation 
in which case 
its restriction $\pi_1 = f^*(\pi_2)$ to  $\sG_1(\R)$ is a finite direct sum of irreducible discrete series representation, and all the
constituents of $\pi_1$ are in the same $L$-packet, and the one dictated  
by the morphism of  $L$-groups: $f: {}^L {\sG}_2 \rightarrow {}^L {\sG}_1$. This is clear since $\pi_1$ is clearly a discrete  series representation of $\sG_1(\R)$, and has an infinitesimal
character, therefore by the well-known result  on discrete series representations for $\sG_1(\R)$: any two discrete series representations
of $\sG_1(\R)$ with the same central and infinitesimal character are in the same $L$-packet, 
$\pi_1$, which
clearly has a central character, is a finite sum of discrete series representations of $\sG_1(\R)$ belonging to one $L$-packet. Once the result 
 is there for discrete series, it is clear too for tempered representations, and then for general representations it follows by the Langlands quotient formalism, see 
 section 4 of
\cite{AP}     for details. 
\end{proof}

\begin{remark}
  It would be interesting to formulate a suitable analogue of Proposition \ref{2} replacing $L$-parameter by $A$-parameter, and
  $L$-packets by Adams-Johnson packets, but we have not done this.
\end{remark}

\section{Some auxiliary results} \label{aux}

The proof of the following proposition has benefited from the comments of the referee.

\begin{prop}\label{Aux1} If  $\Lambda: \C^\times \times \SL_2(\C)  \rightarrow  \wG$ is an $A$-parameter with the infinitesimal
  character that of a finite dimensional representation $F_{\lambda_0}$ of $\sG(\C)$ of highest dominant
  weight $\lambda_0 \in X^\star(\sT)=X_\star(\wT)$, then $\Lambda(\SL_2(\C))$ is a principal
  $\SL_2(\C)$ in the centralizer of $\Lambda(\C^\times)$ in $\wG$, which we can assume by conjugation by $\wG$ to be a
  standard Levi subgroup $\wL$ in $\wG$, such that $\Lambda$ restricted to $\C^\times \times \C^\times$ lands inside $\wT$, and is
  given by
    \[ \lambda_1: z \rightarrow (z/\bar{z})^{\lambda +\check{\rho}_{\wG}-\check{\rho}_{\wL}}, \tag{a}\]
    \[ \lambda_2: z \rightarrow z^{2 \check{\rho}_{\wL}}, \tag{b}\]
    on the two $\C^\times$ inside $\C^\times \times \C^\times$.
\end{prop}
\begin{proof}
  Assume that  $\Lambda$ restricted to $ \C^\times \times \C^\times$ lands inside the maximal torus $\wT$ in $\wG$ used to define the dual group, where the second $\C^\times$ is the diagonal torus in $\SL_2(\C)$
written as $ z = \left ( \begin{array}{cc} z & 0 \\  0 & z^{-1} \end{array}\right )$.
We denote $\Lambda$ restricted to $\C^\times \times \C^\times$ as $\Lambda(z_1,z_2) = \lambda_1(z_1) \lambda_2(z_2)$ where $\lambda_2: \C^\times \rightarrow \wT$
is an algebraic homomorphism 
$\lambda_2(z_2) = z_2^{\alpha_2}$ for some $\alpha_2 \in X_{\star}(\wT)$,
and the homomorphism $\lambda_1: \C^\times \rightarrow \wT$
has the form:
\[z \longrightarrow (z/\bar{z})^{\alpha_1}
\hspace{.5cm}    {\rm with} \hspace{.5cm}  
 2\alpha_1 \in X_{\star}(\wT). \tag{1}\] 

 Thus the restriction of the $L$-parameter associated with the $A$-parameter $\Lambda$ to $W_\C=\C^\times$,
   is given by 
 \[z \rightarrow (z/\bar{z})^{ \alpha_1} (z\bar{z})^{\alpha_2/2}  .  \tag{2} \]

 Assume (taking a conjugate of $\Lambda$ if necessary) that the  cocharacter  $2\alpha_1 + \alpha_2 \in X_\star(\wT)$
 is dominant integral.

 By Lemma \ref{inf} on the infinitesimal character, since
the infinitesimal character of $\Lambda$ is that of  the 
representation $F$ of $\sG(\C)$ of highest weight $\lambda_0 \in X^\star(T) = X_\star(\wT)$, we must have:

\[w(2\alpha_1+ \alpha_2)  = 2 \lambda_0 + 2 \check{\rho}_{\wG} , \]
for some element $w$ in the Weyl group of $\wT$, but as both $2\alpha_1+ \alpha_2$  and $ 2 \lambda_0 + 2 \check{\rho}_{\wG}$ are
dominant, we have the equality:
\[2\alpha_1+ \alpha_2  = 2 \lambda_0 + 2 \check{\rho}_{\wG}  .  \tag{3} \]

Let $\wL \subset \wG$  be the centralizer of   $\lambda_1(\C^\times)$ which since $\alpha_1$ is not
known to be dominant, is not necessarily a standard Levi subgroup of $\wG$.
   We have by (1) above, \[\langle \alpha_1, \beta \rangle = 0 \tag{4}\]
   for all  roots $\beta $ of  $\wL$
   where $\langle -, - \rangle$ denotes the canonical
   pairing $X_\star (\wT) \times X^\star (\wT) \rightarrow \Z$. From equation (3) above,
   for all the roots $\beta$ of $\wT$ in $\wG$, we have
   \[\langle 2\alpha_1+ \alpha_2, \beta \rangle   = \langle 2 \lambda_0 + 2 \check{\rho}_{\wG}, \beta \rangle. \tag{5}\]
   
   As $\langle   \check{\rho}_{\wG}, \alpha \rangle =1$ for all simple roots $\alpha$ of $\wT$ in $\wB$,
   $\langle 2 \check{\rho}_{\wG}, \beta \rangle \in 2\Z$ for all roots $\beta$ of $\wT$ in $\wG$,
   so  for a root $\beta$ of $\wT$ in $\wL$, by equations (4) and (5) above,
   $ \langle \alpha_2, \beta \rangle   = \langle 2 \lambda_0 + 2 \check{\rho}_{\wG}, \beta \rangle \in 2 \Z$,
   but is not 0 for any root $\beta$ of $\wT$ in  $\wL$. 
   Therefore by Lemma \ref{princ-sl2}, $\Lambda(\SL_2(\C))$ is a principal $\SL_2(\C)$ inside $\wL$.

   To complete the proof of the proposition, we now prove that 
   $\wL$ is
   a standard Levi subgroup of $\wG$. For this, consider the set of roots $\beta$ of $\wT$ in $\wL$ such that
   $ \langle \alpha_2, \beta \rangle > 0$, defining a positive system of roots inside $\wL$ with simple roots
    \[\{\beta| \langle \alpha_2, \beta \rangle =2 \} = \{\beta | \langle \lambda_0 +  \check{\rho}_{\wG}, \beta \rangle =1 \}.\]

    Clearly, the roots $\beta$ of $\wG$
    with  $\langle \lambda_0 +  \check{\rho}_{\wG}, \beta \rangle =1$ must be simple roots of $\wG$ (and orthogonal to $\lambda_0$), proving that  $\wL$ is
   a standard Levi subgroup of $\wG$. 

   Since $\Lambda$ restricted to $\SL_2(\C)$ is a principal $\SL_2(\C)$ inside $\wL$, $\lambda_2$ as given in equation (b) is a
   consequence of Proposition \ref{J-M}, from which equation (a) clearly follows. 
   (It also is easy to check that in fact $\alpha_1$ is dominant, but we do not need it now.)
\end{proof}

\begin{lemma} \label{princ-sl2}
  Let $(\wL,\wT)$ be a complex reductive group  $\wL$ containing a maximal torus $\wT$. Suppose $\iota: \SL_2(\C) \rightarrow \wL$ is a homomorphism taking the diagonal torus of $\SL_2(\C)$
  to $\wT$. 
  Let $\alpha$ be the co-character in $X_\star(\wT)$ so defined
  using the restriction of $\iota$ to the diagonal torus in $\SL_2(\C)$.
Then if for all  roots $\beta$ of $\wT$ in $\wL$, one has
   $\langle \alpha, \beta \rangle\in 2\Z $ but never 0,  
  then $\iota: \SL_2(\C) \rightarrow \wL$
  is a principal $\SL_2(\C)$ in $\wL$.
  \end{lemma}
\begin{proof}
  This is  Proposition 7, \S11, chapter VIII of \cite{Bo}, stated there for Lie algebras, easily seen to be equivalent
  to the assertion here for algebraic groups. (Readers will recognize this lemma as a special case of the Dynkin-Kostant classification
  of nilpotent conjugacy classes in terms of the weighted Dynkin diagram, in this case with weight 2 at each node,
  which is the regular nilpotent conjugacy class.)
    \end{proof}

\begin{lemma} \label{jacobson}
  Let $(\wL, \wB,\wT)$ be a triple consisting of a reductive  group $\wL$ over $\C$,
  a Borel subgroup $\wB$, and a maximal
  torus $\wT \subset \wB$.
  Let $\phi$ be an involutive automorphism of $(\wL, \wT)$ which takes $\wB$ to $\wB^-$.
  Then there is a
  homomorphism $\iota: \SL_2(\C) \rightarrow \wL$
taking the diagonal torus of $\SL_2(\C)$
  to $\wT$, and the group of upper triangular matrices in $\SL_2(\C)$
  to $\wB$  whose image is a principal $\SL_2(\C)$ inside $\wL$, such that:
  \[ \phi(\iota ( g))= \iota(sgs^{-1}), {\rm~where~}  s =  \left ( \begin{array}{cc} 0 & 1 \\  -1 & 0 \end{array}\right ).\]

  Further, any two such homomorphisms $\iota: \SL_2(\C) \rightarrow \wL$
  are conjugate by an element  $t \in \wT$
  such that $t^{-1}\phi(t) \in Z(\wL)$.
\end{lemma}
\begin{proof}
  The proof we offer is modeled on an argument of Kostant for constructing a principal $\SL_2(\C)$ inside the reductive group $\wL$.

  We will make an
  argument on the Lie algebra, constructing a Lie triple $\{H, X, Y\}$ in the Lie algebra of $\wL$ with
  $H$ belonging to the Lie algebra of $\wT$, $X$ (resp. $Y$) a regular nilpotent element in the Lie algebra of the unipotent radical of $\wB$
  (resp. the opposite Borel subgroup $\wB^-$),  and with: 
  $$[H,X] =2X, [H,Y] = -2Y, [X,Y] = H, {\rm ~~ and~~} \phi(X)=Y.$$
  Such a Lie triple  $\{H, X, Y\}$ is  invariant under $\phi$,  on which $\phi$ operates by conjugation by
  $ s   $.

  Since $\phi$   takes $\wB$ to $\wB^-$, there is an involution on the set $S$, again to be denoted by $\phi$,  of the set simple roots of $\wT$ in $\wL$,  such that
  $\phi$ takes $\alpha$ root space to $-\phi(\alpha)$ root space.

  For each $\alpha \in S$, let $\{H_\alpha, X_\alpha, Y_\alpha \}$  with
  $$[H_\alpha,X_\alpha] =2X_\alpha, [H_\alpha,Y_\alpha] = -2Y_\alpha, [X_\alpha,Y_\alpha] = H_\alpha,$$
  be a Lie triple with $X_\alpha, Y_\alpha$ belonging  to $\alpha$ and $-\alpha$ root spaces. If $\phi(\alpha) = -\alpha$, by scaling $X_\alpha$
  appropriately, one can assume that $Y_\alpha = \phi(X_\alpha)$. On the other hand, if $\phi(\alpha) = \beta \not = \alpha$,
  we can assume that
  $$\phi(X_\alpha) = Y_\beta, \phi( Y_\alpha) = X_\beta, \phi( H_\alpha) = -H_\beta.$$

  With these choices of  $\{H_\alpha, X_\alpha, Y_\alpha\}$ for all simple roots $\alpha \in S$,
  define
\begin{eqnarray*}
  X_S & = & \sum _{\alpha \in S} t_\alpha X_\alpha, \\
  Y_S & = & \phi(X_S)= \sum _{\alpha \in S} t_{\phi(\alpha)} Y_\alpha \\
  H_S & = & [X_S,Y_S]
\end{eqnarray*}
where $t_\alpha$ are certain complex numbers to be fixed later.

Let $H = \sum_{\alpha \in S}  a_\alpha H_\alpha$ be the unique vector in the span of $H_\alpha, \alpha \in S$ such that
$[H,X_\alpha] = 2 X_\alpha$ for all $\alpha \in S$.

If we can  construct $\{ t_\alpha \in \C^\times | \alpha \in S \}$
such that $H = H_S=[X_S,Y_S]$, then $\{H_S, X_S, Y_S\}$ will be a Lie triple
invariant under $\phi$.
Observe that (because $[X_\alpha,Y_\beta] = 0$ if $\alpha \not = \beta$)
$$ H_S=[X_S,Y_S] = \sum_{\alpha \in S}  t_\alpha t_{\phi(\alpha)} H_\alpha,$$
therefore $$H_S=H  = \sum_{\alpha \in S}  a_\alpha H_\alpha,$$
if and only if we can choose $\{ t_\alpha| \alpha \in S \}$ such that
$$a_\alpha =  t_\alpha t_{\phi(\alpha)},$$
for all $\alpha \in S$. Clearly, this is so if and only if for any $\alpha$ with $\phi(\alpha) \not = \alpha$,
in the expression for $H = \sum_{\alpha \in S}  a_\alpha H_\alpha$, $a_\alpha = a_{\phi(\alpha)}$. For proving this, observe that
 $H = \sum_{\alpha \in S}  a_\alpha H_\alpha$ is the unique vector in the span of $H_\alpha, \alpha \in S$ such that
$[H,X_\alpha] = 2 X_\alpha$ for all $\alpha \in S$, equivalently, $[H,Y_\alpha] = -2 Y_\alpha$ for all $\alpha \in S$.
Since $\phi(Y_\alpha) = X_{\phi(\alpha)}$ for all $\alpha \in S$, by applying $\phi$ to the system of equations:
$[H,Y_\alpha] = -2 Y_\alpha$, we find that $[\phi(H), X_\alpha] = -2 X_\alpha$, and therefore by the uniqueness of
$H$, $\phi(H) = -H$. This combined with what we noted earlier that $\phi( H_\alpha) = -H_{\phi(\alpha)}$, 
proves that $a_\alpha = a_{\phi(\alpha)}$ if  $\phi(\alpha) \not = \alpha$
allowing us to construct $\{ t_\alpha \in \C^\times| \alpha \in S \}$
such that $a_\alpha =  t_\alpha t_{\phi(\alpha)},$
constructing the desired homomorphism $\iota: \SL_2(\C) \rightarrow \wL$.

The uniqueness follows directly by
 using  $\phi(\iota ( g))= \iota(sgs^{-1}),$ and 
noting that any two principal $\SL_2(\C)$ in $\wL$ are conjugate in $\wL$, and in this case
any two homomorphism $\iota_1,\iota_2: \SL_2(\C) \rightarrow \wL$ are conjugate by $\wT$.   \end{proof}

The following corollary will play an important role in the proof of Theorem \ref{cohomo}.
 
\begin{cor} \label{rem}
  Let ${}^L \sG = \wG \cdot \Gal(\C/\R) $ be (the Galois form) of the  $L$-group of a reductive group $\sG$ over $\R$,
  which comes equipped with a pair
  $(\wB,\wT)$ consisting of  a Borel subgroup $\wB$ containing
  a maximal torus $\wT$ together with a pinning on $\wB$ left invariant by $\Gal(\C/\R)$. Let $\wL$ be a standard Levi subgroup of $\wG$ left invariant by $\omega_{\wG} \cdot j \in \wG \cdot j $
and $\phi$ the automorphism  of $\wL$ which is
  conjugation by $\omega_{\wG} \cdot j$.
  In the notation of Lemma \ref{jacobson}, let $\omega_{\wL}= \iota(s)$,  then
  the automorphism $ \phi$ of $\wL$  fixes the element
  $\omega_{\wL} \in \wL$.
  Thus as elements of ${}^LG$, we have
  \begin{enumerate}
  \item commuting elements $j$ and $\omega_{\wG}$
    with $j^2=1$, $\omega_{\wG}^2= \epsilon = 2\hat{\rho}_{\wG}(-1) \in Z(\wG)$;
  \item  $\omega_{\wL}= \iota(s)$ with $(\omega_{\wG} \cdot j) \omega_{\wL}(\omega_{\wG} \cdot j)^{-1}= \omega_{\wL}$;
  \item $\omega_{\wL}^2=  2\hat{\rho}_{\wL}(-1) \in Z(\wL)$;
  \item The automorphism  $ \omega_{\wL}\omega_{\wG} \cdot j$
    is identity on the principal $\SL_2(\C)$ given by
    $\iota: \SL_2(\C) \rightarrow \wL$,
    therefore the automorphism  $ \omega_{\wL}\omega_{\wG} \cdot j$ of $\wL$
    can be considered as a pinned involution of $\wL$ with the pinning coming from $\iota: \SL_2(\C) \rightarrow \wL$.
  \end{enumerate}
\end{cor}
 \begin{prop} \label{central} An $A$-parameter $\Lambda: \C^\times \times \SL_2(\C) \rightarrow \wG$ with an integral 
 infinitesimal  character -- in particular, a cohomological parameter --
 sends the element $(-1,-1) \in \C^\times \times \SL_2(\C)$
to  the element $\epsilon = 2\hat{\rho}_{\wG}(-1)$, a central element in $Z(\wG)$
 introduced in Proposition \ref{J-M}, in particular to the trivial element in $\wG$ if $\sG$ is  simply connected. 
   \end{prop}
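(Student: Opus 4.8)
The plan is to compute $\sigma(-1,-1)$ directly, using the factorization $(-1,-1)=(-1,1)\cdot(1,-I)$ in $\C^\times\times\SL_2(\C)$, where $-I$ denotes the central element $\mathrm{diag}(-1,-1)$ of $\SL_2(\C)$. First I would conjugate $\sigma$ inside $\wG$ so that both $\sigma(\C^\times)$ and the image under $\sigma$ of the diagonal torus of $\SL_2(\C)$ lie in the fixed maximal torus $\wT$; this is legitimate because these two subgroups commute, so they generate a subgroup of a maximal torus. Then $\sigma|_{\C^\times}$ has the form $z\mapsto z^\lambda\bar z^\mu$ with $\lambda-\mu\in X_\star(\wT)$, and since the infinitesimal character is assumed integral, Lemma \ref{equal} applies and forces $\mu=-\lambda$, so that $\lambda-\mu=2\lambda\in X_\star(\wT)$. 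Using the convention $z^\lambda\bar z^\mu=(z\bar z)^\mu z^{\lambda-\mu}$ and plugging in $z=-1$ (so $z\bar z=1$) gives $\sigma(-1,1)=(2\lambda)(-1)$. Similarly, writing the restriction of $\sigma$ to the diagonal torus of $\SL_2(\C)$ as $\mathrm{diag}(t,t^{-1})\mapsto t^\alpha$ for some $\alpha\in X_\star(\wT)$, and noting that $-I$ is the image of $t=-1$, we get $\sigma(1,-I)=\alpha(-1)$. Multiplying, $\sigma(-1,-1)=(2\lambda+\alpha)(-1)$.

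The next step is to recognize $2\lambda+\alpha$ as twice the infinitesimal parameter of $\sigma$. Following the calculation in the proof of Lemma \ref{equal}, composing $\sigma$ with the natural map $z\mapsto(z,\mathrm{diag}((z\bar z)^{1/2},(z\bar z)^{-1/2}))$ yields the $L$-parameter underlying $\sigma$, whose restriction to $\C^\times$ is $z\mapsto z^{\lambda+\alpha/2}\bar z^{-\lambda+\alpha/2}$; by Lemma \ref{inf} the infinitesimal parameter of $\sigma$ is therefore $\lambda+\alpha/2\in X_\star(\wT)\otimes\C$. By hypothesis this equals the infinitesimal character $\lambda_0+\check\rho_{\wG}$ of a finite-dimensional irreducible representation of $\sG(\C)$, where the highest weight $\lambda_0$, a dominant character of the torus, lies in $X_\star(\wT)$ under the duality identification (and $\rho_{\sG}$ corresponds to $\check\rho_{\wG}$). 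Hence $2\lambda+\alpha=2(\lambda+\alpha/2)=2\lambda_0+2\check\rho_{\wG}$, a sum of two elements of $X_\star(\wT)$.

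Finally I would evaluate at $-1$. Since $\lambda_0$ is a homomorphism, $(2\lambda_0)(-1)=\lambda_0(-1)^2=\lambda_0((-1)^2)=\lambda_0(1)=1$, while $(2\check\rho_{\wG})(-1)=\epsilon$ by the definition of $\epsilon$ in Proposition \ref{J-M}. Therefore $\sigma(-1,-1)=\epsilon$. When $\sG$ is simply connected its dual group $\wG$ is adjoint, so $Z(\wG)$, and in particular $\epsilon$, is trivial. I do not foresee a genuine obstacle here: the argument is essentially a bookkeeping exercise, and the only points demanding care are the conventions for $z^\lambda\bar z^\mu$ and for the normalization $\epsilon=2\check\rho_{\wG}(-1)$, together with the elementary but decisive observation that an \emph{even} cocharacter such as $2\lambda_0$ automatically kills $-1$. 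The real content of the proposition is that the factor of $2$ appearing in $2\lambda+\alpha$ annihilates the representation-dependent part $\lambda_0$ and leaves only the universal constant $2\check\rho_{\wG}(-1)$.
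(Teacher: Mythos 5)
Your proposal is correct and is essentially the paper's own argument: after conjugating into $\wT$, writing $\sigma|_{\C^\times}(z)=z^{\lambda}\bar z^{\mu}$, invoking Lemma \ref{equal} to get $\mu=-\lambda$, and reading off the infinitesimal parameter $\lambda+\alpha/2$ as in Lemma \ref{inf}, the paper likewise concludes $\sigma(-1,-1)=(-1)^{\mu-\lambda+\alpha}$ with $\mu-\lambda+\alpha\in 2X_\star(\wT)+2\check\rho_{\wG}$, so that only $2\check\rho_{\wG}(-1)=\epsilon$ survives; your factorization $(-1,-1)=(-1,1)\cdot(1,-I)$ is only a presentational variant of this. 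The one point to phrase slightly more carefully is that $\lambda+\alpha/2$ equals $\lambda_0+\check\rho_{\wG}$ only up to the Weyl group, but this is harmless since the coset $X_\star(\wT)+\check\rho_{\wG}$ is Weyl-stable (equivalently, the resulting value $\epsilon$ is central, hence unaffected by Weyl conjugation).
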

\begin{proof}
The parameter $ \Lambda: \C^\times \times \SL_2(\C) \rightarrow \wG$ can be assumed to take $\C^\times \times \C^\times$ 
into the maximal torus $\wT$ in $\wG$ used to define the dual group, where the second $\C^\times$ is the diagonal torus in $\SL_2(\C)$
written as $ z = \left ( \begin{array}{cc} z & 0 \\  0 & z^{-1} \end{array}\right )$.

We denote $\Lambda$ restricted to $\C^\times \times \C^\times$ as $\Lambda(z_1,z_2) = \lambda_1(z_1) \lambda_2(z_2)$ where $\lambda_2: \C^\times \rightarrow \wT$
is an algebraic homomorphism 
$\lambda_2(z_2) = z_2^{\alpha_2}$ for some $\alpha_2 \in X_{\star}(\wT)$.

The homomorphism $\lambda_1: \C^\times \rightarrow \wT$ has the form (being trivial on $\R^+\subset \C^\times$ by Lemma \ref{equal}):
 $$z \longrightarrow (z/\bar{z})^{\alpha_1},  \hspace{1cm}
 2 \alpha_1 \in X_{\star}(\wT) .$$
 
 We calculate the infinitesimal character of the $A$-parameter $\Lambda: \C^\times \times \SL_2(\C) \rightarrow \wG$ which by
 definition is the infinitesimal character of the underlying $L$-parameter.
  For this,  consider the map,
 
$$ \begin{array}{ccccc}\C^\times & \rightarrow &  \C^\times  \times \SL_2(\C) & \rightarrow & \wG \\
   z & \rightarrow & z \times \left ( \begin{array}{cc} (z\bar{z})^{1/2} & 0 \\  0 & (z\bar{z})^{-1/2} \end{array}\right  ) & \rightarrow & \wT,
  \end{array} $$
  which is,  $$\Lambda(z,(z\bar{z})^{1/2}) = 
 \lambda_1(z) \lambda_2((z\bar{z})^{1/2}) = (z/\bar{z})^{\alpha_1}(z\bar{z})^{\alpha_2/2} .$$
 Thus by Lemma \ref{inf}, the infinitesimal character of  $\Lambda$ is
 $${\alpha_1+\alpha_2/2} .$$
 Since the infinitesimal character of 
 $\Lambda$ is given to be integral, we must have
 \[(\alpha_1 + \alpha_2/2)  \in X_{\star}(\wT) + \hat{\rho}_{\wG}. \tag{1} \]
 
(If half the sum of positive roots  for $\sG$
 is a weight for the maximal torus  of $\sG$, such as for simply connected groups $\sG$,
 then $ \hat{\rho}_{\wG}$ belongs to
 $X_{\star}(\wT  )$, and the coset space   $X_{\star}(\wT) + \hat{\rho}_{\wG}$ reduces to  $X_{\star}(\wT)$.)
 
 Returning to the calculation of $\Lambda(-1,-1)$, note that
 $$\Lambda(z_1,z_2) = \lambda_1(z_1) \lambda_2(z_2) = (z_1/\bar{z}_1)^{\alpha_1}  z_2^{\alpha_2},$$
  hence,
  $$\Lambda(-1,-1) =(-1)^{ -2\alpha_1} (-1)^{\alpha_2} = (-1)^{-2\alpha_1+\alpha_2},$$
  with $ 2\alpha_1, \alpha_2 \in X_{\star}(\wT).$
  
  By the integrality of the infinitesimal character, as noted before in equation (1),
    we have:
  
  $$-2\alpha_1+\alpha_2 =  -2(\alpha_1 + \alpha_2/2) + 2 \alpha_2  \in 2X_\star(\wT) +2
  \hat{\rho}_{\wG} \subset X_\star(\wT)  . $$
  Therefore, $$ (-1)^{-2\alpha_1+\alpha_2} = (-1)^{ 2 \hat{\rho}_{\wG}  } = \epsilon,$$
  proving the proposition.
 \end{proof}

 \begin{remark} If the $A$-parameter of a cohomological representation of $\GL_n(\R)$ is of the form:
  $$\Lambda = \sum_d \sigma_d \otimes [m_d + 1] \oplus \omega_\R^{\{0,1\}}[a],$$
 then it can be seen that the action of $(1,-1)\in \C^\times \times \SL_2(\C)$ 
 on the subspace of $\Lambda$ corresponding to $\sigma_d \otimes [m_d + 1]$
 is by $1$ or $-1$ depending on the 
 parity of $m_d$ in the above sum which can have both parities, whereas  $(-1,-1)\in \C^\times \times \SL_2(\C)$ 
 acts by the same sign on all summands, i.e., is a scalar in conformity with the above proposition.  In this example,
 all the irreducible summands in the decomposition of $\Lambda$ as 
 $\Lambda = \sum_d \sigma_d \otimes [m_d + 1] \oplus \omega_\R^{\{0,1\}}[a],$ are selfdual, and the fact that 
 $(-1,-1)\in \C^\times \times \SL_2(\C)$ 
 acts by the same sign on all summands is equivalent to say that all the summands are selfdual of the same parity (orthogonal 
 if $n$ is odd and
 symplectic if $n$ is even).
 This remark also proves an assertion made in Theorem \ref{main3} that 
 ``a cohomological parameter for $\PGL_n(\R)$ is automatically a
 parameter (hence cohomological) for the symplectic group if $n$ is odd, and is a parameter
 for the odd orthogonal group if $n$ is even''.

 \end{remark}

 Let us first note the following lemma whose straightforward proof will be left to the reader. This lemma
 will come useful when we try to extend $A$-parameters from $W_\C \times \SL_2(\C)$ to $W_\R \times \SL_2(\C)$.

 \begin{lemma} \label{trivial} Let ${\mathcal N}$ be a normal subgroup of a group ${\mathcal G}$ of index 2, with $\iota$ an element in
   ${\mathcal G}$ but not in ${\mathcal N}$ with
   $z= \iota^2 \in {\mathcal N}$. Suppose  we are given a homomorphism $\lambda: {\mathcal N} \rightarrow {\mathcal G}_1$, where ${\mathcal G}_1$ is any group,
then for $\lambda$ to extend to a homomorphism $\tilde{\lambda}: {\mathcal G} \rightarrow {\mathcal G}_1$ taking $\iota$ to an element 
 $\iota' \in {\mathcal G}_1$, it is necessary and sufficient to have:
 \begin{enumerate}
  \item $(\iota')^2 = \lambda(z)$.
  \item $\lambda(\iota n \iota^{-1}) = \iota' \lambda(n) \iota'^{-1}.$
   \end{enumerate}
\end{lemma}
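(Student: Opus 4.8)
The plan is the obvious one, and the lemma is a pure exercise in bookkeeping. Since $[G:N]=2$ and $\iota\notin N$, we have the coset decomposition $G = N \sqcup \iota N$, and every element of $\iota N$ is uniquely of the form $\iota n$ with $n\in N$; moreover $\iota n\iota^{-1}$ and $\iota^{-1}n\iota$ lie in $N$ by normality. Hence any extension $\tilde\lambda$ of $\lambda$ with $\tilde\lambda(\iota)=\iota'$ is forced to be $\tilde\lambda|_N = \lambda$ and $\tilde\lambda(\iota n) = \iota'\lambda(n)$ for $n\in N$, and the content of the lemma is precisely that this formula defines a homomorphism if and only if conditions $(1)$ and $(2)$ hold.

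For necessity I would simply apply the homomorphism property of a putative $\tilde\lambda$: since $\iota^2 = z$, we get $(\iota')^2 = \tilde\lambda(\iota)^2 = \tilde\lambda(z) = \lambda(z)$, which is $(1)$; and $\lambda(\iota n\iota^{-1}) = \tilde\lambda(\iota n\iota^{-1}) = \tilde\lambda(\iota)\tilde\lambda(n)\tilde\lambda(\iota)^{-1} = \iota'\lambda(n)\iota'^{-1}$, which is $(2)$ (the left-hand side making sense because $\iota n\iota^{-1}\in N$).

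For sufficiency, with $\tilde\lambda$ defined by the forced formula, I would verify $\tilde\lambda(xy)=\tilde\lambda(x)\tilde\lambda(y)$ on the four types of pairs $(x,y)$ according to whether each factor lies in $N$ or in $\iota N$. The case $x,y\in N$ is immediate. For $x=n_1\in N$, $y=\iota n_2$, write $n_1\iota n_2 = \iota\,(\iota^{-1}n_1\iota)\,n_2$ with $(\iota^{-1}n_1\iota)n_2\in N$ and use the rewriting of $(2)$, namely $\lambda(\iota^{-1}n_1\iota)=\iota'^{-1}\lambda(n_1)\iota'$ (obtained by substituting $\iota^{-1}n_1\iota$ for $n$ in $(2)$). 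For $x=\iota n_1$, $y=n_2\in N$ it is direct from the definition. The only case that actually uses $(1)$ is $x=\iota n_1$, $y=\iota n_2$: here $\iota n_1\iota n_2 = \iota^2(\iota^{-1}n_1\iota)n_2 = z\,(\iota^{-1}n_1\iota)\,n_2\in N$, so $\tilde\lambda(\iota n_1\iota n_2)=\lambda(z)\lambda(\iota^{-1}n_1\iota)\lambda(n_2)=\lambda(z)\iota'^{-1}\lambda(n_1)\iota'\lambda(n_2)$, and comparing with $\tilde\lambda(\iota n_1)\tilde\lambda(\iota n_2)=\iota'\lambda(n_1)\iota'\lambda(n_2)$ reduces exactly to $\lambda(z)=(\iota')^2$, i.e. $(1)$.

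There is no real obstacle here: no analytic or geometric input is needed, only care in tracking which conjugated form of $(2)$ is used in each case and in noting that the various products remain inside $N$ so that $\lambda$ may be applied. Accordingly I would present the argument compactly, spelling out the $\iota N\cdot\iota N$ case (the one that forces condition $(1)$) and leaving the remaining routine verifications to the reader, as the statement of the lemma already signals.
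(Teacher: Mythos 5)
Your verification is correct and complete: the necessity direction is immediate from the homomorphism property, and your case-by-case check on the coset decomposition $G=N\sqcup\iota N$ (with the $\iota N\cdot\iota N$ case isolating condition $(1)$ and the conjugated form of $(2)$ handling the mixed cases) is exactly the routine argument the paper has in mind when it states that the ``straightforward proof will be left to the reader.'' Nothing is missing.
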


 \section{Uniqueness} \label{unique}

 In this section we will prove that the non-uniqueness in extending
 $A$-parameters $W_\C \times \SL_2(\C)\rightarrow \wG$ to $W_\R \times \SL_2(\C)\rightarrow{}^L\sG$ is limited to the group
$H^1(W_\R/\C^\times, \wZ)$ where
  $\wZ$ is the center of $\wG$, which is trivial for example if $\sG$ is simply connected. 
 
Here is a  result where $\mathsf{G}$ being simply connected comes useful. The proof of this lemma is due to the referee, which is more direct than that of the authors
who proved the lemma using the dual groups and a better-known result  
  for a simply connected group.

\begin{lemma} \label{connected} Let $\wG$ be any reductive group with connected center, and let $\wL$ be any Levi subgroup in $\wG$. Then the center $Z(\wL)$ of $\wL$ is a connected algebraic group.  
 
 \end{lemma}
\begin{proof} Observe that if $Z(\wG)$ is the center of a connected reductive group such as $\wG$, with a Borel subgroup $\wB$ containing a maximal torus $\wT$ and $\Delta  \subset X^\star(\wT)$,  the set of simple roots of $\wT$ on $\wB$, then $Z(\wG)$  is connected if and only if 
  \[X^\star(Z(\wG)) = X^\star(\wT)/\langle \Delta \rangle,\]
  is torsion free, where $\langle \Delta \rangle$ is the subgroup of  $X^\star(\wT)$ generated by  $\Delta  \subset X^\star(\wT)$.

  Similarly, if $\wL=\wL_S$ is defined as the centralizer of the connected component of identity of the simple roots in $S\subset \Delta$, then   \[X^\star(Z(\wL)) = X^\star(\wT)/\langle S \rangle,\]
  which is clearly torsion free as it sits in the following exact sequence for which both the end terms are torsion free:
  \[ 0 \rightarrow \langle \Delta \rangle /\langle S \rangle \rightarrow
  X^\star(\wT)/\langle S \rangle \rightarrow  X^\star(\wT)/\langle \Delta \rangle \rightarrow 0. \] 
  \end{proof}

The following proposition will play a key role in proving uniqueness of $A$-parameters with the infinitesimal character
of a finite dimensional representation.

 \begin{prop} \label{uniqueness} Let $\wL$ be a standard Levi subgroup in
${}^L\sG = \wG \rtimes W_\R$ with $\wG$ an adjoint semi-simple group.
   Let $\varpi_{\wG} = \omega_{\wG} \cdot j$ be the element in $\wG \cdot j$ constructed in Proposition 1.
   Assume $\varpi_{\wG}$  normalizes $\wL$, therefore $\langle \varpi_{\wG} \rangle$ acts on  $Z(\wL)$.
   Then, 
 \[H^1(\langle \varpi_{\wG} \rangle, Z(\wL)) = 0.\] 
\end{prop}
 \begin{proof}
    Let $\iota_{\wG}$ be the {\it opposition involution} (a pinned automorphism of $(\wG,\wB,\wT)$) which has the property 
  \[\omega_{\wG} \circ \iota_{\wG}(t) = t^{-1}, \forall t \in \wT,\tag{1}\]
where  $\omega_{\wG}$ is  the longest element in the Weyl group of
$({\wG}, \wB,\wT)$.

For the Levi subgroup $\wL_S$  corresponding  to a subset $S \subset \Delta$ of simple roots, the center of $\wL_S$, call it $Z(\wL_S)$, is known to be connected 
 by Lemma \ref{connected}, and is equal to $\{ t \in \wT| \alpha(t) = 1,  \forall \alpha \in S\}$. Therefore, 
the natural map \[Z(\wL_S) \longrightarrow \prod \Gm, \] product taken over
 simple roots in $\Delta-S$, is an isomorphism.  The Levi subgroup  $\wL_S$  is invariant under $\varpi_{\wG}$ if and only if the 
 set $S$, and hence $\Delta-S$, is invariant under $\iota_{\wG}\cdot j$.
 Therefore $Z(\wL_S) \cong \prod \Gm$, a certain  product indexed by the  simple roots in 
 $\Delta-S$ on which $\iota_{\wG} \cdot j$ 
 operates as it operates on $\Delta-S$. Thus $Z(\wL_S)$ is a product of certain copies of  $\C^\times \times \C^\times$ on which $\iota_{\wG}\cdot j$ operates by 
 permuting the two co-ordinates $(z_1,z_2) \rightarrow (z_2,z_1)$, and certain copies  of $\C^\times$ on which $\iota_{\wG} \cdot j $ operates by identity. By the property (1) recalled above, we have:
 $\omega_{\wG} \circ j (t) = (\iota_{\wG} \circ j)(t^{-1}), \forall t \in \wT.$ Hence the action of $\varpi_{\wG}$ on $Z(\wL_S)$ 
  is a product of certain copies of $\C^\times \times \C^\times$ on which $\varpi_{\wG}$ operates by $(z_1,z_2) \rightarrow (z_2^{-1},z_1^{-1})$, and certain copies of $\C^\times$ on 
  which $\varpi_{\wG}$ operates by $t\rightarrow t^{-1}$. 
  For both the actions, $H^1(\langle \varpi_{\wG} \rangle,\C^\times \times \C^\times) = 0$ and $H^1(\langle \varpi_{\wG} \rangle,\C^\times) = 0$, completing
  the proof of the proposition.
 \end{proof}

The following standard lemma, cf. Serre \cite{Se} I \S5.8(a),  answers  the question about 
possible lifts of an $A$-parameter from $\C^\times \times \SL_2(\C)$ to $W_\R \times \SL_2(\C)$.
Note that for an $A$-parameter $\sigma: W_\R \times \SL_2(\C) \rightarrow {}^L\sG$ 
whose restriction to $\C^\times \times \SL_2(\C)$ is given by ${\sigma}_0: \C^\times \times \SL_2(\C) \rightarrow {}^L\sG$  gives rise to an
action of $W_\R \times \SL_2(\C)$ and $\C^\times \times \SL_2(\C)$ on $\wG$. We denote $\wG$ with these
actions of $W_\R \times \SL_2(\C)$ and $\C^\times \times \SL_2(\C)$ by $\wG_\sigma$ and $\wG_{\sigma_0}$,
and will  consider the corresponding cohomology sets:  
$H^1(W_\R \times \SL_2(\C), \widehat{\G}_\sigma) $ and $H^1(\C^\times \times \SL_2(\C),\widehat{\G}_{\sigma_0})$.

\begin{lemma}\label{fibers}
  For a fixed $A$-parameter $\sigma: W_\R \times \SL_2(\C) \rightarrow {}^L\sG$
  with its restriction to $\C^\times \times \SL_2(\C)$ given by ${\sigma}_0: \C^\times \times \SL_2(\C) \rightarrow {}^L\sG$,
we have an exact sequence of pointed sets, with $\alpha$ an injective map of sets, and where $\sigma_0$ can be considered as the base-point 
of the pointed set $H^1_{\sigma_0}(\C^\times \times \SL_2(\C),\widehat{\G}) ^{{\rm Gal}(\C/\R) }$:

\begin{eqnarray*}
1 &\longrightarrow& H^1({\rm Gal}(\C/\R), \widehat{\G}^{\sigma_0(\C^\times \times \SL_2(\C))}
) \stackrel{\alpha}\longrightarrow H^1(W_\R \times \SL_2(\C), \widehat{\G}_\sigma) \\
&\stackrel{\rm res}\longrightarrow& H^1(\C^\times \times \SL_2(\C),\widehat{\G}_{\sigma_0}) ^{{\rm Gal}(\C/\R) }.\end{eqnarray*}
\end{lemma}

In this lemma, $\widehat{\G}^{\sigma_0(\C^\times \times \SL_2(\C))}$ is the center $Z(\wL)$ of $\wL$.

\begin{prop} \label{general}
  For $\sG$ a connected real reductive group, given a cohomological $A$-parameter $\sigma: \C^\times \times \SL_2(\C)  \rightarrow {}^L\sG$,
  the conjugacy classes of its extension to $W_\R \times \SL_2(\C)$
  has a transitive action of $H^1(W_\R/\C^\times, \wZ)$ where
  $\wZ$ is the center of $\wG$.
\end{prop}
\begin{proof}
  By Lemma \ref{fibers}, we know that the set of conjugacy classes of extensions of $\sigma$ to $W_\R \times \SL_2(\C)$ is a principal
  homogeneous space for $H^1(W_\R/\C^\times, Z(\wL))$ where $\wL$ is the Levi subgroup of $\wG$ naturally associated with the
  parameter $\sigma$. By Proposition \ref{uniqueness}, and in the notation of that Proposition,
   $H^1(\langle \varpi_{\wG} \rangle, Z(\wL)/\wZ) = 0$ since $ Z(\wL)/\wZ$ is the
  center of a Levi in the adjoint dual group $\wG/\wZ$. We are now done by the cohomology exact sequence
  corresponding to the short exact sequence of $\langle \varpi_{\wG} \rangle $-groups:
\[1 \rightarrow \wZ
  \rightarrow Z(\wL) \rightarrow Z(\wL)/\wZ \rightarrow 1, \tag{*}\]
  and noting that the action of $ \langle \varpi_{\wG} \rangle $ on  $\wZ$, the center of $\wG$, is the same
  as the action of $W_\R/\C^\times$ on $ \wZ$.
\end{proof}

\begin{remark}\label{twisting} 
  By a result of Langlands, cf. 
  Appendix A by Labesse and Lapid in \cite{LM},
  there is a natural surjective
  map from $H^1(W_\R, \wZ)$ to
  the group of characters of $\sG(\R)$ which is an isomorphism if $\sG(\R)$ is quasi-split. As $\C^\times$ is a divisible group, $\Hom(\C^\times, \wZ(\C))$ is torsion free, and therefore all the torsion elements in
  $H^1(W_\R, \wZ)$ come from $H^1(W_\R/\C^\times, \wZ)$. Thus finite order characters of $\sG(\R)$
  come from  $H^1(W_\R/\C^\times, \wZ)$, in particular if $\sG(\R)$ is quasi-split, then $\sG(\R)$
  is a connected Lie group if and only if  $H^1(W_\R/\C^\times, \wZ)=0$, in which case all the inner forms
  of $\sG(\R)$ are also connected. Using Proposition \ref{uniqueness}  in the cohomology sequence associated with the exact sequence (*) of Proposition \ref{general}, it follows that if the quasi-split form of $\sG(\R)$ is connected, then so is the group $\sL(\R)$
  whose $L$-group (as constructed in Remark \ref{VZcharacter})
is ${}^L \sL = \wL \rtimes \langle \omega_{\wL}\omega_{\wG} \cdot j\rangle$.

Further, using the previous paragraph, one can interpret Proposition \ref{general} as the assertion
that for a general reductive group, cohomological
  $W_\R \times \SL_2(\C)$ parameters which are equal when restricted to $\C^\times \times \SL_2(\C)$ are twists of each other
  by finite order characters of $\sG(\R)$.
  \end{remark}

\vspace{.4cm}
\noindent{\bf Notation:} Given the transitive action of $H^1(W_\R/\C^\times, \wZ)$
on  the conjugacy classes of  extension of $\sigma: \C^\times \times \SL_2(\C)  \rightarrow {}^L\sG$ to $W_\R \times \SL_2(\C)$, let $Q_\sigma$ be the
quotient of $H^1(W_\R/\C^\times, \wZ)$ through which
$H^1(W_\R/\C^\times, \wZ)$ acts (now simply transitively)
on  the conjugacy classes of  extension of $\sigma: \C^\times \times \SL_2(\C)  \rightarrow {}^L\sG$ to $W_\R \times \SL_2(\C)$.
\vspace{.4cm}

\section{Cohomological $A$-parameters} \label{parameters}

\begin{defn} (Cohomological $A$-parameter) \label{cohomologicalCparameter}
  Let $\sG$ be a real reductive group. By a {\it cohomological $A$-parameter} for $\sG$, we will
  mean an $A$-parameter $\tilde{\Lambda}:  W_\R \times \SL_2(\C) \rightarrow {}^L\sG$ such that
  the infinitesimal character of $\tilde{\Lambda}$  is that of a finite dimensional algebraic
  representation of $\sG(\C)$.
\end{defn}

\begin{remark}
  The relevance of this definition to cohomological representations of $\sG(\R)$
  will become clear in the next section when we review the work of Adams-Johnson.   
\end{remark}

\begin{defn} \label{sap} 
(Self-associate parabolic) Let $\sG$ be a real reductive group with ${}^L \sG = \wG\rtimes W_\R$ its $L$-group
  with $W_\R = \C^\times \cdot \langle j \rangle$. The $L$-group ${}^L \sG$  comes equipped with a maximal torus $\wT$ and a Borel subgroup $\wB$, both invariant under $W_\R$.  A standard parabolic $\wP$ in $\wG$ (i.e, containing $\wB$) will be 
called {\it self-associate} in ${}^L\sG$ 
if $\wP$ is conjugate by the element  $ \omega_{\wG} \cdot j \in \wG \cdot j$ (where $\omega_{\wG}$ is the longest element in the Weyl group of $\wT$
for the ordering induced by $\wB$) to its opposite, i.e., to $\wP^-$; equivalently, $\iota_{\wG}\cdot j$
preserves $\wP$ where $\iota_{\wG}$ is the
{\it opposition involution}, a pinned automorphism of $\wG$
preserving $(\wB,\wT)$,  and acting on $\wT$ as $\omega_{\wG}(t^{-1})$. Standard
Levi subgroup of a self-associate standard parabolic of ${}^L\sG$ will be called  a {\it self-associate Levi} of  ${}^L\sG$ 
 \end{defn}

There is another related definition for the coefficient system $V$ considered in this paper, as these
are the only coefficient systems for which there are cohomological unitary representations by
Proposition 6.12, Chapter II of \cite{BW}.

\begin{defn} (Self-associate coefficient system)
  Let $\sG$ be a real reductive group with $\theta$, a Cartan involution on $\sG(\R)$, extended to
  an algebraic automorphism of $\sG(\C)$, also denoted as $\theta$.
  A finite dimensional irreducible representation $V$ of $\sG(\C)$ will be said to be {\it self-associate}
  if $V^\theta \cong V$. \end{defn}

  \begin{remark} \label{dualgroup}
  It is well-known, see for instance section 6 of \cite{AV}, that for a real reductive group
  $\sG(\R)$ with
  a Cartan involution $\theta$ giving rise to a pinned automorphism
  ${}^\vee \theta$ of $\wG$,  we have  ${}^\vee \theta
  = (-\omega_{\wG})\cdot j = \iota_{\wG}\cdot j$ where $\iota_{\wG}$ is the opposition involution.
  Thus the condition on a standard Levi or a standard parabolic of $\wG$ to be self-associate is just being invariant under $ {}^\vee \theta$.
  \end{remark}

 \begin{remark}\label{dscase}
   If $\sG$ is an absolutely  simple group over $\R$ with a discrete series representation for $\sG(\R)$, then all the standard
   parabolics
   in $\wG$ are self-associate as $\sG(\R)$ has a compact Cartan subgroup.
It is easy to see that $\sG(\R)$ has a compact Cartan subgroup
   if and only if $\sG(\R)$ is an inner form of its
   compact form whose $L$-group is defined by the action of $\Gal(\C/\R)$ on $\wG$ by the { opposition involution},
   preserving $(\wB,\wT)$ and acting on $\wT$ as $\omega_{\wG}(t^{-1})$.
   Similarly, if $\sG(\R)$ has a discrete series representation, then 
   all the representations of $\sG(\C)$ are self-associate since $\sG(\R)$ has a compact Cartan subgroup if and only if the Cartan involution on $\sG(\R)$ is an inner automorphism on $\sG(\C)$. Thus the notion
   of self-associate, either for a parabolic in $\wG$, or for a representation of $\sG(\C)$, is
   non-trivial only when $\sG(\R)$ does not have a discrete series representation, thus $\sG(\R)$
   is an inner form of  either a split group of  type $A_n, D_{2n+1}, E_6$, or $\sG(\R)$ is an
   outerform of the split $D_{2n}$.
  For each of these groups except $D_4$, we have ${\Out}(\wG) = \Z/2$, and in these cases a  self-associate 
parabolic in $\wG$ is one which corresponds to a subset $S \subset \Delta$ of simple roots 
invariant under the non-trivial diagram automorphism; for $D_4$, it is the same involution which defines
the outerform.
 \end{remark}

The next proposition justifies the usage of self-associate parabolics and self-associate Levi subgroups in $L$-groups when dealing with cohomological $A$-parameters $\tilde{\Lambda}:  W_\R \times \SL_2(\C) \rightarrow {}^L\sG$.

\begin{prop}
  Suppose  ${}^L\sG = \wG \rtimes W_\R$ is an $L$-group, $\phi: \C^\times/\R^+ \rightarrow \wG$ is a homomorphism,
  defining $\wL$ as the centralizer of  $\phi (\C^\times)$ in $\wG$, and $\wP$ as the subgroup of  $\wG$
  on which the action of   $\C^\times$ is via the characters $(z/\bar{z})^n, n \geq 0$, which we assume after conjugating $\phi$ by an element of $\wG$ are standard Levi and standard parabolic in $\wG$. Similarly define $\wP^-$
  as the subgroup of  $\wG$
  on which the action of   $\C^\times$ is via characters $(z/\bar{z})^n, n \leq 0$.
  Then if there is an element $g_0$ 
  in $\wG \cdot j$ for which  $g_0 \phi (z) g_0^{-1}= \phi(z^{-1})$ for $z \in \C^\times$, 
  $\wL$ must be  a self-associate
  Levi subgroup in $\wG$, in which case,  $\wP$ is automatically a self-associate parabolic.
\end{prop}
\begin{proof}
  Since $g_0 \phi (z  ) g_0^{-1}= \phi(z^{-1})$ for $z \in \C^\times$,  conjugation by $g_0$ preserves
$\wL$ and takes $\wP$ to $\wP^-$. Since  any two
  pairs consisting of a Borel subgroup and a torus contained in it are conjugate in any reductive group, in
  particular for $\wL$, there is an element $l \in \wL$ such that conjugation by
  $l\cdot g_0$ preserves the standard maximal
  torus $\wT$ in $\wL$, and takes the standard Borel subgroup in $\wL$
  to its opposite Borel subgroup in $\wL$.
  Conjugation by $l\cdot g_0$ still takes $\wP$ to $\wP^-$, but further takes $\wT$ to $\wT$ and
  $\wB $ to $\wB^-$, thus up to an element in $\wT$, $ l\cdot g_0 $ must be $\omega_{\wG}\cdot j$. Hence, $(\omega_{\wG}\cdot j)\wP (\omega_{\wG}\cdot j)^{-1}=
  \wP^{-}$,  proving the proposition.
\end{proof}

Here is one of the main theorems of this work  classifying $A$-parameters of $\sG(\R)$ with the infinitesimal character of
a finite dimensional algebraic representation of $\sG(\C)$. 

\begin{thm} \label{cohomo}
  Let $\sG$ be a connected reductive group over $\R$ with $L$-group ${}^L \sG= \wG\cdot W_\R$. 
Given a finite dimensional algebraic
representation $F_\lambda$ of $\sG(\C)$ with highest dominant weight $\lambda \in X^\star(\sT) = X_\star(\wT)$,
there is a bijective correspondence,
  denoted $\wL \longleftrightarrow \Lambda_{\wL}$,  between
  \begin{itemize}
  \item[(1)]   
   standard Levi subgroups $\wL \subset \wG$ 
    for which
    $\langle \lambda, \alpha_i \rangle =0$ for all simple roots $\alpha_i$ of $\wL$,
  \item [(2)] the conjugacy classes of $A$-parameter $\Lambda:  W_\C \times \SL_2(\C) \rightarrow {}^L\sG$
    such that
  the infinitesimal character of $\Lambda$  is that of  $F_\lambda$.
\end{itemize}
  In this bijective correspondence, the restriction of $\Lambda_{\wL}$ to $W_\C=\C^\times \subset W_\R$
lands inside $Z(\wL) \subset \wT$, and 
  is given by:
    \[ \tag{a} \lambda_1: z \rightarrow (z/\bar{z})^{\lambda +\check{\rho}_{\wG}-\check{\rho}_{\wL}}. \]
    The restriction of $\Lambda_{\wL}$
    to $\SL_2(\C)$ lands inside $\wL$ as a principal $\SL_2(\C)$, and the restriction of $\Lambda_{\wL}$ 
    to the diagonal torus $(z,z^{-1})$ in $\SL_2(\C)$, lands inside $\wT$, and is given by
  \[\tag{b} \lambda_2: z \rightarrow z^{2 \check{\rho}_{\wL}}. \]

  If either $\wL$ is not self-associate,
  or $\omega_{\wG}\cdot j (\lambda)\not =-\lambda$,
  the corresponding 
  $A$-parameter $\Lambda:  W_\C \times \SL_2(\C) \rightarrow {}^L\sG$ does not extend
  to $\tilde{\Lambda}:  W_\R \times \SL_2(\C) \rightarrow {}^L\sG$. Therefore, assume now that 
    $\omega_{\wG}\cdot j (\lambda)=-\lambda$. Then
 an $A$-parameter $\Lambda:  W_\C \times \SL_2(\C) \rightarrow {}^L\sG$ as in (2) for  a standard Levi subgroups $\wL \subset \wG$ invariant under $\omega_{\wG} \cdot j \in \wG \cdot j$
  extends to an $A$-parameter  $\tilde{\Lambda}:  W_\R \times \SL_2(\C) \rightarrow {}^L\sG$, which is unique for example when $\sG$ is simply connected.
  In general, an $A$-parameter $\Lambda:  W_\C \times \SL_2(\C) \rightarrow {}^L\sG$ as in (2) for  a standard Levi subgroups $\wL \subset \wG$ invariant under $\omega_{\wG} \cdot j \in \wG \cdot j$
  extends to an $A$-parameter  $\tilde{\Lambda}:  W_\R \times \SL_2(\C) \rightarrow {}^L\sG$, sending $j \in W_\R$ to
  $\omega_{\wL}\omega_{\wG} \cdot j \in \wG \rtimes W_\R$ where  $\omega_{\wL} \in \wL$,
  not uniquely determined by $\wL$, was introduced in Corollary \ref{rem}. The group $Q_\sigma$,
  a finite elementary abelian 2-group, which was introduced at the end of \S\ref{unique}), 
  acts simply transitively
  on the set of such extensions.
\end{thm}
  
\begin{proof}
  We begin by noting that   by Proposition \ref{Aux1},
  given an $A$-parameter $\Lambda: W_\C \times \SL_2(\C) \rightarrow {}^L \sG$ whose infinitesimal
  character is that of $F_\lambda$, then if the centralizer of $\Lambda(W_\C)$ is defined to be $\wL$,
  $\Lambda: \SL_2(\C) \rightarrow \wL$ is a principal $\SL_2(\C)$ in $\wL$, which we can assume
  by conjugating $\Lambda$ by $\wG$ to be a standard Levi subgroup of $\wG$. Furthermore,
   by Proposition \ref{Aux1},
  $\Lambda$ restricted to $W_\C=\C^\times$ is  as in equation (a), 
  and $\Lambda$ restricted to the diagonal of $\SL_2(\C)$  is  as in equation (b).

    Next, we prove the converse, i.e. given a standard Levi subgroups $\wL \subset \wG$
        for which
  $\langle \lambda, \alpha_i \rangle =0$ for all simple roots $\alpha_i$ of $\wL$, we construct the desired
  $A$-parameter $\Lambda_{\wL}:  W_\C \times \SL_2(\C) \rightarrow {}^L\sG$.
For this, define $\lambda_1$ on  $W_\C=\C^\times$ as in equation (a). Observe  that
  \[Z_{\wG}(\lambda_1(\C^\times)) = \wL.\]
  This follows because
  \[ \langle \lambda +\check{\rho}_{\wG}-\check{\rho}_{\wL},
  \alpha_i \rangle = 0
  \hspace{.5 cm} {\rm ~for ~} \alpha_i {\rm ~simple}  \hspace{.5 cm} {\rm ~if~ and~ only~ if~} \alpha_i \in \wL,  \]
  which is a consequence of
  \begin{enumerate}
    \item $\langle \lambda, \alpha_i \rangle =0$ for all simple roots $\alpha_i$ of $\wL$,
    \item
  $\langle \check{\rho}_{\wG}, \alpha_i \rangle = \langle \check{\rho}_{\wL}, \alpha_i \rangle =1$
      for all simple roots $\alpha_i$ of $\wL$,
    \item    $\langle\check{\rho}_{\wG}-\check{\rho}_{\wL}, \alpha_i \rangle > 0$ if
      $\alpha_i$ is a simple root in $\wG$ but   $\alpha_i \not \in \wL$. (To see this, observe that
      since ${\rho}_{\wL}$ written as a sum of positive simple roots does not involve $\alpha_i$, we must have
      $\langle {\rho}_{\wL}, \alpha_i \rangle \leq 0$.)
\end{enumerate}

Fix any $\Lambda_2: \SL_2(\C) \rightarrow \wL$ whose image is a principal $\SL_2(\C)$ in
$\wL$. Then $\Lambda_2$ restricted to the diagonal subgroup $(z,z^{-1})$ inside $\SL_2(\C)$ is $\lambda_2$ as given in equation (b) of the theorem by
Proposition \ref{J-M}.

Using $\lambda_1$ on $W_\C=\C^\times$, and $\Lambda_2$ on $\SL_2(\C)$,  we have constructed an $A$-parameter $\Lambda_{\wL}: \C^\times \times \SL_2(\C) \rightarrow {}^L \sG$ whose
infinitesimal character is that of $F_\lambda$, completing the proof of the first part of the Theorem.

Next we argue that $\Lambda_{\wL}: \C^\times \times \SL_2(\C) \rightarrow {}^L \sG$
can be extended using Lemma \ref{trivial} to
$W_\R \times \SL_2(\C) \rightarrow {}^L \sG$ if $\wL \subset \wG$ is invariant under $\omega_{\wG} \cdot j \in \wG \cdot j$.

 We will now find it convenient to modify  the definition of an $A$-parameter, and not use $A$-parameters
 using the group $ W_\R \times \SL_2(\C)$, but instead  use the group  $A'_\R = \langle  A_\C, j \rangle$ where  $A_\C= \C^\times \times \SL_2(\C)$
 is a normal subgroup of $A'_\R$ of index 2 and the element $ j $ outside $A_\C$
  satisfies  $j^2=(-1, -1) \in \C^\times \times \SL_2(\C)$; further,
   \[j(z,g) j^{-1} = (\bar{z}, sgs^{-1}) ,\]
  where   $s= \left ( \begin{array}{cc} 0 & 1 \\  -1 & 0 \end{array}\right )$.
 
 It is easy to see that   $\psi: W_\R \times \SL_2(\C) \rightarrow  A'_\R$ 
 with $\psi(z,g) =(z, sgs^{-1})$ for $(z,g) \in A_\C$ and $\psi(j) = (1,s)\cdot j$ is an isomorphism of groups.
  Thus, $A$-parameters  using either of
 the two groups  $W_\R \times \SL_2(\C)$, or $A'_\R $,
 are the same, but
 $A'_\R$ will be useful now, as we extend $\Lambda_{\wL}: \C^\times \times \SL_2(\C) \rightarrow {}^L \sG$
 to
$A'_\R  \rightarrow {}^L \sG$ if $\wL \subset \wG$ is invariant under $\omega_{\wG} \cdot j \in \wG \cdot j$.

  Since  we are given that $\wL$ is invariant under conjugation by $\omega_{\wG} \cdot j
  \in \wG \cdot j$,
by Lemma \ref{jacobson}, there exists $\Lambda_2: \SL_2(\C) \rightarrow \wL$ whose image is a principal $\SL_2(\C)$ in
$\wL$ (and invariant under conjugation by $\omega_{\wG} \cdot j \in \wG \cdot j$)
such that $\Lambda_2$ restricted to the diagonal subgroup $(z,z^{-1})$ inside $\SL_2(\C)$ lands inside $\wT$, and is 
$\lambda_2$ as given in equation (b) of the theorem. We change the original $\Lambda_{\wL}: \C^\times \times \SL_2(\C) \rightarrow {}^L \sG$
by a conjugation in $\wL$ so that $\Lambda_{\wL}$ restricted to $\SL_2(\C)$ is $\Lambda_2$.
We now extend $\Lambda_{\wL}: \C^\times \times \SL_2(\C) \rightarrow {}^L \sG$ to $A'_\R$ 
by sending $j \in W_\R$ to $\varpi_{\wG} = \omega_{\wG} \cdot j$, and check now that the conditions of  Lemma \ref{trivial}
are satisfied.

  Condition 1 of Lemma \ref{trivial} (for ${\mathcal G}= A'_\R$ containing ${\mathcal N}=W_\C$ as a subgroup of index 2)
 will be satisfied  because of Proposition \ref{central} and since
 $\varpi_\sG^2= \epsilon $ by Proposition \ref{J-M}. Suffices then to check condition 2 of Lemma \ref{trivial}
 for ${\mathcal N}=\C^\times  \times \SL_2(\C)$. First we check the condition on $\C^\times$ (or rather $\C^\times/\R^+$) on which $j$
 operates by
 $z \rightarrow z^{-1}$, thus to check the condition  2 of Lemma \ref{trivial} on $\C^\times$,  it suffices to check the equivalent condition
 \[\omega_{\wG} \cdot j(\lambda +\check{\rho}_{\wG}-\check{\rho}_{\wL}) = -(\lambda +\check{\rho}_{\wG}-\check{\rho}_{\wL}), \tag{EC}\]
 which follows because
\begin{eqnarray}
  \omega_{\wG} \cdot j(\check{\rho}_{\wG}) & = &  -\check{\rho}_{\wG}, \\
  \omega_{\wG} \cdot j(\check{\rho}_{\wL}) & = &  -\check{\rho}_{\wL},\\
  \omega_{\wG} \cdot j(\lambda) & = &  -\lambda.
\end{eqnarray}
Here, observe that the first equality is obvious, and the second follows because $\wL$ is invariant under 
$ \varpi_{\wG} $. Thus the condition (EC) is satisfied if and only if the condition
(3) is satisfied. This proves one of the assertions in the theorem that if 
$\lambda$ does not satisfy condition (3), then the parameter
${\Lambda}:  W_\C \times \SL_2(\C) \rightarrow {}^L\sG$, cannot be extended to
$\tilde{\Lambda}:  W_\R \times \SL_2(\C) \rightarrow {}^L\sG$. Assume thus that 
$\omega_{\wG} \cdot j(\lambda)  =   -\lambda$,   so
condition 1 of Lemma \ref{trivial} (for ${\mathcal G}= A'_\R$ containing ${\mathcal N}=W_\C$ as a subgroup of index 2)
will be satisfied, and condition 2 of Lemma \ref{trivial} for $\SL_2(\C)$  is part of Lemma \ref{jacobson},
allowing to extend the
parameter $\Lambda_{\wL}: \C^\times \times \SL_2(\C) \rightarrow {}^L \sG$ to $A'_\R$,

Having extended the
parameter $\Lambda_{\wL}: \C^\times \times \SL_2(\C) \rightarrow {}^L \sG$ to $A'_\R$,
we can now use the isomorphism
between $A'_\R$ and $\SL_2(\C) \times W_\R$ 
to extend $\Lambda_{\wL}: \C^\times \times \SL_2(\C) \rightarrow {}^L \sG$ to
$\tilde{\Lambda}_{\wL}: W_\R \times \SL_2(\C) \rightarrow {}^L \sG$, which following the isomorphism 
between $A'_\R$ and $\SL_2(\C) \times W_\R$ discussed earlier,
sends $j \in W_\R$  to the element $\omega_{\wL}\omega_{\wG} \cdot j \in \wG \rtimes W_\R$
where $\omega_{\wL}$ is as in Remark \ref{rem}.
(By Remark,   \ref{rem} $(\omega_{\wL}\omega_{\wG} \cdot j)^2 =  2\hat{\rho}_{\wL}(-1) \cdot 2\hat{\rho}_{\wG}(-1)$,
hence by Proposition \ref{central}, it is clear that 
$j \in W_\R \rightarrow \omega_{\wL}\omega_{\wG} \cdot j \in \wG \rtimes W_\R$ defines an extension
of $\Lambda_{\wL}: \C^\times \times \SL_2(\C) \rightarrow {}^L \sG$ to $W_\R \times \SL_2(\C) \rightarrow {}^L \sG$.)

The simply transitive action of the group $Q_\sigma$ on the set of extensions of the $A$-parameter $\Lambda: \C^\times \times \SL_2(\C) \rightarrow \wG$ constructed using $\lambda_1,\lambda_2$ in (a) and (b) to  $\tilde{\Lambda}: W_\R \times \SL_2(\C) \rightarrow {}^L \sG$ is a consequence of Proposition \ref{general}.
\end{proof}

\begin{remark} \label{VZcharacter}
  In the previous theorem, an $A$-parameter
  $\Lambda:  W_\C \times \SL_2(\C) \rightarrow {}^L\sG$ associated to a
  standard Levi subgroups $\wL \subset \wG$ invariant under $\omega_{\wG} \cdot j \in \wG \cdot j$
  extends to an $A$-parameter  $\tilde{\Lambda}:  W_\R \times \SL_2(\C) \rightarrow {}^L\sG$.
    In this, the image of   $\tilde{\Lambda}$ in
  $ {}^L\sG$ is not necessarily contained in the Levi subgroup ${}^LL$ of $ {}^L\sG$ (even if ${}^LL$ was a Levi subgroup of $ {}^L\sG$).
  By Remark \ref{rem},   $\omega_{\wL}\omega_{\wG} \cdot j$
  considered as an
  automorphism
  of $\wL$ is a pinned automorphism of $\wL$ of order $\leq 2$.
This allows us to  define a reductive group $\sL$ over $\R$ (called the twisted Levi subgroup of $\sG(\R)$, well-defined as an innerclass of groups) by
demanding that its $L$-group be
${}^L \sL = \wL \rtimes \langle \omega_{\wL}\omega_{\wG} \cdot j\rangle$.
 \end{remark}

\begin{example} \label{cg} (Classical groups) Let $\sG$ be any one of the classical groups $\Sp(n), \SO(p,q)$ over $\R$. Recall that if $\sG(\R)$ has a discrete series representation, then all parabolics in $\wG$ are self-associate, therefore the condition of being self-associate on a parabolic of $\wG$ is non-trivial
  only for the groups $\SO(p,q)$ with $p+q$ even and $(-1)^{(p-q)}= (-1)^{(p+q)/2}$
  for which self-associate parabolics  are those which are invariant under the diagram automorphism of the group. Each of these groups $ \Sp(n), \SO(p,q)$ have an
  $L$-group which comes equipped with a natural representation in $\GL_m(\C)$ preserving either symmetric or skew-symmetric bilinear form on $V=\C^m$ with Levi subgroups of the form:

$$\wM = \GL(V_1) \times \GL(V_2) \times \cdots \times \GL(V_k) \times \wG(W),$$  
where $$V= V_1+ V_2 + \cdots + V_k + W + V_k^\vee + \cdots + V_2^\vee +  V_1^\vee,$$
is a decomposition of the space $V$ in isotropic subspaces $V_i$ and $V_i^\vee$ which are in perfect pairing 
under the corresponding bilinear form, and all these are orthogonal to the non-degenerate subspace $W$.
Such a Levi subgroup corresponds to a self-associate parabolic if and only if $\dim (W) \geq 2$
in case the group involved is $\SO(p,q)$ with $p+q$ even and $(-1)^{(p-q)}= (-1)^{(p+q)/2}$.

The corresponding cohomological $A$-parameter $\sigma$ restricted to $W_\C \times \SL_2(\C)$ lands inside such a 
Levi subgroup $\wM$ with the image of $\SL_2(\C)$, a principal $\SL_2(\C)$ in $\wM$, and therefore,
the parameter $\sigma$ as an $m$-dimensional representation of $W_\R \times \SL_2(\C)$ must be:
$$\sigma = \sigma_{d_1} \otimes [n_1] + \sigma_{d_2} \otimes [n_2] + \cdots + \sigma_{d_k} \otimes [n_k] +
A[W],$$
  with $\sigma_{d_i}$, the irreducible 2-dimensional representation of $W_\R$ which contains the character $(z/\bar{z})^{d_i/2}$ when restricted to $\C^\times$, and $[n_i]$ the
  irreducible representation of $\SL_2(\C)$ of dimension $n_i=\dim (V_i)$; $A[W]$ is a representation of
  $W_\R \times \SL_2(\C)$
  which restricted to $\SL_2(\C)$ is 
  a  principal $\SL_2(\C)$ in $\wG(W)$ (which is the irreducible
  representation $[\dim(W)]$ of $\SL_2(\C)$
  in all cases unless we are dealing with $\SO(p,q)$ with $p+q$ even, in which case it is  the representation $[\dim(W) -1] + 1$;
  the integers $n_i,d_i$ are arbitrary
  with the only constraint that the infinitesimal
  character of $\sigma$ is that of a finite dimensional representation of $\sG$.
  All possible $A$-parameters $\sigma: W_\R \times \SL_2(\C) \rightarrow {}^L \sG$ with the above restriction on $\sigma|_{\SL_2(\C)}$ are allowed with the only condition
  coming from $\det(\sigma)=1$ if $\sG$ is not an even orthogonal group, and if it is, $\det(\sigma)$ must be the determinant
  of the quadratic space.
\end{example}
  
\section{Adams-Johnson packets:  Definition and parametrization} 
\label{AJpack}

In this section we review  Adams-Johnson packets ($AJ$-packets, for short) of  unitary representations of $\sG(\R)$ and their parameters.
This summarizes work of Adams-Johnson \cite{AJ} building on the Vogan-Zuckerman \cite{VZ} classification of cohomological representations, with some details on $A$-parameters clarified.  The main concern of \cite{AJ}, apart from defining the $AJ$-packets, was to prove endoscopic character relations for suitable signed combinations of characters in these packets.  These relations constitute important evidence that these are the correct $A$-packets, but since they will not be of immediate relevance to us we do not recall them.  

Adams-Johnson packets and their parameters have also been reviewed in \cite{Ar, AMR, Ta}, but under restrictive assumptions on $\mathsf{G}(\R)$ (typically, under the assumption that the derived group  of $\sG(\R)$ has discrete series).  Since it is essential for us to deal with the general case, and we have not found an account of this in the literature, we give one here. We have tried to give a self-contained account of the elementary parts of the theory.

We use the usual notation:  $\sG$ is a connected reductive group over $\R$, $G=\sG(\R)$, $\g_0$ is the Lie algebra of $G$,  $K$ is a maximal compact, $\theta$ is the Cartan involution of $G$ as well as of $\g_0$ given by $K$, $\g=\k+\p$ the corresponding
Cartan decomposition.  We also write $\theta$ for the extension of $\theta$ from $\g_0$ to $\g$ and its action on duals etc.  The following definition is basic to what follows: 

\begin{defn}
A {\it $\theta$-stable parabolic subalgebra for $\g_0$} is a parabolic subalgebra $\q \subset \g$ such that 
\begin{enumerate} 
\item $\theta(\q)=\q$ 
\item $\q$ and $\bar{\q}$ are opposite, i.e. $\q \cap \bar{\q} = \l$ is a Levi subalgebra of $\q$.   
\end{enumerate} 
(Note that this is not simply a parabolic subalgebra of $\g$ stable under $\theta$ because of the second condition.) 
 We will usually write $\q=\l+\u$ for the Levi decomposition.  
The Levi subalgebra $\l$ is defined over $\R$ and defines a connected reductive subgroup $\mathsf{L}$ of $\mathsf{G}$ with real points $L= {\rm Stab}_G(\q)$, Lie algebra $\l_0 = \l \cap \g_0$, and Cartan involution $\theta|_{\l_0}$. 
\end{defn}

The parametrization of $\theta$-stable parabolics for $\g_0$ in terms of the dual group will be reviewed in Proposition~\ref{Sigmaprop} below; here we recall enough to state Theorem \ref{AJ1} below precisely.

Let $\sT^c \subset \sB \subset \sG_\C$ be a $\theta$-stable fundamental torus defined over $\R$ and a $\theta$-stable  Borel subgroup containing it.  (The existence of a $\theta$-stable  Borel subgroup containing $\sT^c$ is proved in Proposition \ref{para}.)
The dual group $\wG$ of $\sG$ comes equipped with a Borel subgroup $\wB$ and maximal torus $\wT^c$.  Let  
$$
\mathcal{L} = 
\left\{\text{ $\wB$-standard Levi subgroups in $\wG$ invariant under $\omega_{\wG}\cdot j$ }\right\}.
$$  
Now a $\theta$-stable parabolic $\q$ for $\g_0$ is conjugate to a unique $\sB$-standard one, and this gives a Levi subgroup $\wL$ in $\wG$; the condition that $\q$ is $\theta$-stable translates into $\wL \in \mathcal{L}$. This defines a natural surjective mapping 
$$
\Sigma: \left\{\begin{array}{c} \text{$\theta$-stable parabolic} \\ \text{subalgebras for $\g_0$} \end{array}\right\} 
\Big{/} K
\longrightarrow 
\mathcal{L} 
$$
discussed in detail in Proposition~\ref{Sigmaprop}. 

Let $E$ be an irreducible finite-dimensional algebraic representation of $\sG(\C)$.  We will assume that $E$ is self-associate so that its $\sB$-highest weight $\lambda_E\in X^\star(\sT^c)$ is $\theta$-invariant.  (Recall that this is a necessary condition for the infinitesimal character of $E$ to match with that of a unitary representation, e.g. \cite[II.6.3]{BW}.) 
Let $\mathcal{L}_E$ be the following subset of $\mathcal{L}$: 
$$
\mathcal{L}_E =\left\{\ \wL\in \mathcal{L}: \langle \lambda_E, \alpha\rangle=0 \text{ for } \alpha\in \Phi(\wT^c,\wL)\ \right\}.
$$
By Proposition~\ref{Sigmaprop} below, for $E$ as above, 
$\dim(E^\u)=1$ if and only if $\Sigma(\q) \in \mathcal{L}_E$.  Thus when $\q \in \Sigma^{-1}(\wL)$ for $\wL \in \mathcal{L}_E$, we have a one-dimensional unitary representation $E^\u$ of $L$, which we  denote $\pi_L$.

The following theorem defines $AJ$-packets and summarizes their properties: 

\begin{thm}\label{AJ1} 
Let $\sG, G, K, E$ etc be as above.  For $\wL \in \mathcal{L}_E$ and $c \in H^1(\R,Z(\wL))$, define the Adams-Johnson packet 
$$
\Pi_{E,\wL,c} = \left\{\ \mathcal{R}^S_\q(\pi_L \otimes \nu_L(c)): \q \in \Sigma^{-1}(\wL)\ \right\} 
$$
where:
\begin{enumerate}

\item[--] $\pi_L:L \to \C^\times$ is the unitary character of $L$ on $E^\u$, 

\item[--] $\nu_L(c)$ is the finite-order character of $L$ given by the homomorphism $\nu_L:H^1(\R,Z(\wL)) \to \Hom_{cts}(L/L_0,\C^\times)$ of \cite{La} (see e.g. 
\cite[p.~319]{Ta} or \cite[Appendix~A]{LM}), and 

\item[--] $\mathcal{R}^S_\q(\cdot)$ is the cohomological induction functor in degree $S=\dim(\u\cap \k)$. 

\end{enumerate} 
When $c$ is the identity we will simply write $\Pi_{E,\wL,c}$ as  $\Pi_{E,\wL}$.  Then 

(1)  The representations in a packet are pairwise inequivalent, so that isomorphism classes in  $\Pi_{E,\wL,c}$ are in canonical bijection with 
$$
\Sigma^{-1}(\wL) = W(G,T^c) 
\backslash W(\mathsf{G},\mathsf{T}^c)^\theta/W(\widehat{\mathsf{L}}, \widehat{\mathsf{T}}^c)^\theta,  
$$ 
where we identify $W(\sG, \mathsf{T}^c)$ and $W(\wG,\widehat{\mathsf{T}}^c)$ to think of $W(\widehat{\mathsf{L}},\widehat{\mathsf{T}}^c)$ as a subgroup of $W(\sG,\mathsf{T}^c)$.  
The choice of $\q=\l+\u\in \Sigma^{-1}(\wL)$ fixes a bijection 
$$
\Sigma^{-1}(\wL)=\ker\left[H^1(\R,\sL) \to H^1(\R,\sG)\right]
$$ 
with a set of pure inner forms of $\sL$. 

(2) 
The packets $\Pi_{E,\wL}$  consist of irreducible unitary representations with $(\g,K)$-cohomology.  More precisely, if $R=\dim(\u\cap \p)$,
$$
H^*(\g,K, \mathcal{R}^S_\q(\pi_L)\otimes E^\vee) = H^{*-R}(\l,L\cap K, \C). 
$$
Every irreducible unitary  representation 
with $H^*(\g,K,V\otimes E^\vee)\neq  \{0\}$ 
appears in such a packet.

The remaining $AJ$-packets $\Pi_{E,\wL,c}$ are twists of the packets $\Pi_{E,\wL}$ by finite-order characters of $G/G_0$ and their union consists of all irreducible unitary representations with $H^*(\g,K_0,V\otimes E^\vee) \neq \{0\}$.

(3) If $\wL = \wT^c$, so that $\Sigma^{-1}(\wT^c)$ consists of the $\theta$-stable Borel subalgebras, then $\Pi_{E,\wT^c}$ consists of tempered representations which belong to the fundamental series of $G$.  The packet $\Pi_{E,\wT^c,c}$ consists of  twists of the fundamental series representations by a character of $G/G_0$.  Conversely, every irreducible tempered representation with $H^*(\g,K_0,V\otimes E^\vee)\neq \{0\}$ belongs to some $\Pi_{E,\wT^c,c}$.

\end{thm}

The proof of this theorem will be discussed after we discuss the relation with $A$-parameters. 

The next theorem describes the parametrization of $AJ$-packets in terms of the cohomological $A$-parameters introduced in
Definition~\ref{cohomologicalCparameter} of  the previous section.
We restrict ourselves to $G=\sG(\R)$ connected
for reasons explained below in Remark~\ref{parametergeneral}. 
 Given a cohomological $A$-parameter $\psi:W_\R \times \SL_2(\C) \to {}^L\sG$,
 the restriction of $\psi$ to $\C^\times \times \SL_2(\C)$ gives (by Theorem~\ref{cohomo})
a standard Levi subgroup $\wL$ (the standard conjugate of $ Z_{\wG}(\psi(\C^\times))$) of $\wG$ invariant under $\omega_{\wG} \cdot j$.  Suppose that $\wL \in \mathcal{L}_E$. For $\q=\l+\u$ in $\Sigma^{-1}(\wL)$, let 
$\pi_{L}:L \to \C^\times$ be the character of $L$ on $E^{\u}$ and let $\lambda=d\pi_L:\l \to \C$.  
(Note that $L$ is also connected e.g. by \cite[Lemma 5.10]{KV}, so that $\lambda$ determines $\pi_L$.) 
We define 
$$
\Pi(\psi) := \Pi_{E,\wL}=\left\{\, A_\q(\lambda): \q \in \Sigma^{-1}(\wL) 
\, \right\} 
$$
where we use the notation $A_\q(\lambda) = \mathcal{R}^S_{\q}(\pi_L)$ as in \cite{VZ}.  
With this definition, we can state the following theorem, which is just a restatement of the previous theorem:

\begin{thm}\label{AJ2}
  Let $G=\sG(\R)$ be connected.  Then for  each cohomological $A$-parameter $\psi$,  the Adams-Johnson packet $\Pi(\psi)$ of representations consists of pairwise inequivalent irreducible unitary representations.  Given $\wL$ associated with $\psi$ as above and choosing $\q = \l + \u \in \Sigma^{-1}(\wL)$ the packet is in bijection with the set of pure inner forms $\ker\left[H^1(\R,\sL) \to H^1(\R,\sG)\right]$.  The union of all such $\Pi(\psi)$ consists of all irreducible unitary representations with $(\g,K)$-cohomology with respect to a finite-dimensional representation of $\sG(\C)$.

\end{thm}

\begin{remark}
The parametrization $\psi \mapsto \Pi(\psi)$ in Theorem~\ref{AJ2} is not one-to-one in general.  For example, if $G$ is compact then any $\wL \in \mathcal{L}_E$ gives the same packet (see Example \ref{example:compact} in the next section). 
It can also fail to be one-to-one because the map $\nu_G:H^1(\R,Z(\wG)) \to \Hom_{cts}(G/G_0,\C^\times)$ need not be injective, e.g. if $G=\GL_n(\mathbb{H}), n \geq 1$.  
\end{remark}

\begin{remark}\label{parametergeneral}
Theorem \ref{AJ2} has only been stated for $\sG$ with $G$ connected, unlike Theorem \ref{AJ1}, i.e. for a general $\sG$ we do not specify a map $\psi \mapsto \Pi(\psi)$ from cohomological $A$-parameters to $AJ$-packets.  We remark on this briefly.   

Using the correspondence of Theorem~\ref{cohomo}, for a fixed $E$, 
the set $\mathcal{G}_E$ of conjugacy classes of cohomological $A$-parameters with infinitesimal character of $E$
maps to $\mathcal{L}_E$ by restricting to $\C^\times \times \SL_2(\C)$.

Then $\mathcal{G}_E\to \mathcal{L}_E$ is an $H^1(\R,Z(\wG))$-torsor by Proposition~\ref{general}.  On the other hand,  the set of $AJ$-packets of representations with infinitesimal character matching $E$ is 
$$
\mathcal{A}_E=\left\{\Pi_{E,\wL,c}: \wL \in \mathcal{L}_E, c \in H^1(\R,Z(\wL))\right\}
$$ 
(notation as in Theorem~\ref{AJ1}, we will think of these as finite sets of isomorphism classes of $(\g,K)$-modules).
The set $\mathcal{A}_E$ is  an $H^1(\R,Z(\wG))$-torsor over $\mathcal{L}_E$ 
by $\Pi_{E,\wL,c} \mapsto \wL$.  
(Recall that $H^1(\R,Z(\wG)) \twoheadrightarrow H^1(\R,Z(\wL))$ by Proposition~\ref{uniqueness}. 
The action of $H^1(\R,Z(\wG))$ on $\mathcal{A}_E$   factors through $\nu_G:H^1(\R,Z(\wG))\to \mathrm{Hom}_{cts}(G/G_0,\C^\times)$, i.e. it is by twisting by characters of $G/G_0=K/K_0$.)  Fixing a map $\psi \mapsto \Pi(\psi)$ amounts to specifying a map of torsors 
$$\mathcal{G}_E \to \mathcal{A}_E$$ 
over $\mathcal{L}_E$.  
Both torsors have natural sections:  For $\mathcal{A}_E$,  a basepoint in each fibre is given by the packet $\Pi_{E,\wL}$ of $(\g,K)$-cohomological representations in (1) of Theorem~\ref{AJ1}.
On the other hand, a section of $\mathcal{G}_E \to \mathcal{L}_E$ can be specified using arguments of Langlands and Shelstad (see \cite{Ar}, \cite[4.2.2]{Ta}, \cite[5.1]{AMR}).  
This can be used to give a map of torsors $\mathcal{G}_E \to \mathcal{A}_E$.  To check that the resulting map $\psi \mapsto \Pi(\psi)$ is the correct parametrization, it would suffice to check compatibility with the Langlands parametrization in the quasisplit case, i.e. it must be checked that $\Pi(\psi)$ contains the $L$-packet of the $L$-parameter associated with $\psi$ when $\sG$ is quasisplit. This is asserted in \cite[3.3]{AJ} without proof and we will not check it here  (not even in the connected case.) 
\end{remark}

We will now discuss the proof of Theorem~\ref{AJ1}, which will take up the rest of this section. 
Theorem~\ref{AJ1} follows from Proposition~\ref{Sigmaprop}, which parametrizes $\theta$-stable parabolic subalgebras in $\g_0$ in terms of the dual group, and Proposition~\ref{cohprop}, which proves the cohomological properties of representations in $AJ$-packets.    
(Of course, we will do little more than combine well-known facts from the literature.) 
We will first prove a version of the parametrization for subgroups of $\sG(\C)$ which correspond to $\theta$-stable parabolic subalgebras.  Thus a {\it $\theta$-stable parabolic subgroup $\sP$ for $\sG$} is a parabolic subgroup of $\sG(\C)$ which satisfies 
\begin{enumerate}
  \item $\theta(\sP)=\sP$,
  \item $\sP$ and $\overline{\sP}$ are opposite parabolics.   \end{enumerate}
In other words, $\sP$ is a subgroup of $\sG(\C)$ with the Lie algebra a $\theta$-stable parabolic for $\g_0$. 
The   Levi subgroup $\sL:= \sP \cap \overline{\sP}$ is evidently defined over $\R$ and stable under $\theta$.

\begin{prop} \label{para}
(a)  Let $\sG$ be a reductive algebraic group over $\C$ with a fixed maximal torus $\sT^c$. Then the set $X_{\sT^c}$ of
  parabolics $\sP$ of $\sG$ containing $\sT^c$ which are conjugate in $\sG(\C)$ are in a single $W(\sG,\sT^c)$-orbit,
  and are in bijective correspondence with $W(\sG,\sT^c)/W(\sL,\sT^c)$, where $\sL$ is a Levi subgroup of $\sP$ containing $\sT^c$.
  
  (b)  Let $\sG$ be a reductive algebraic group over $\R$ with a Cartan involution $\theta$ on $G=\sG(\R)$,
  and a fixed $\theta$-stable fundamental torus $\sT^c=\sT \, \sA$ over $\R$ where $\sT(\R)$ is a maximal compact torus
  in $\sG(\R)$. Then there is a $\theta$-stable Borel subgroup $\sB$ of $\sG(\C)$ containing $\sT^c$, which we fix. 
  Let $Y_{\sT^c}(\sP_S)$ be the set of $\theta$-stable parabolics $\sP$ of  $\sG$ containing $\sT^c$ which are conjugate to 
  $\sP_S \supset \sB$
  for a  set $S$ of simple roots of $\sT^c$ in $\sB(\C)$, with $\sL_S\supset \sT^c$ the Levi subgroup of $\sP_S$.
  Then  $Y_{\sT^c}(\sP_S)$ is non-empty
  if and only if the set $S$ is $\theta$-invariant.

  If $Y_{\sT^c}(\sP_S)$ is non-empty,
  $Y_{\sT^c}(\sP_S)= W(\sG,\sT^c)^{\theta}/W(\sL,\sT^c)^{\theta}$.
  The set $Y_{\sT^c}(\sP_S)$  carries a natural conjugation
  action of $W(G,T^c):=N_G(T^c)/T^c$, where
  $T^c=\sT^c(\R)$.
  The orbits of $W(G,T^c)$ on $Y_{\sT^c}(\sP_S)$ are parametrized by the double cosets \[W(G,T^c)\backslash
  W(\sG,\sT^c)^\theta/W(\sL_S,\sT^c)^\theta.\]

  (c) Given a $\theta$-stable parabolic subgroup $\sP$ for $\sG$  with Levi subgroup  $\sL= \sP \cap \overline{\sP}$, 
    let $X(\sP)$ be the set of all $\theta$-stable parabolics $\sQ$ for $\sG$ which are  conjugate to $\sP$ in $\sG(\C)$. Then  $X(\sP)$ is invariant under $K=\sG(\R)^\theta $,
  and there is a canonical bijection 
$$
K \backslash X(\sP) = W(G,T^c) 
\backslash  W(\sG,\sT^c)^\theta/W(\sL,\sT^c)^\theta.
$$

(d) The Levi subgroup  $\sL= \sP \cap \overline{\sP}$ of  a $\theta$-stable parabolic $\sP$ contains a fundamental torus $\sT^c$ of $\sG$. For a given 
$\sG(\C)$-conjugacy class of $\theta$-stable parabolics $\sP$ for $\sG$, such Levi subgroups are Galois twists of each other using a
  cocycle from $H^1(\R, \sT^c)$.  More precisely, these subgroups $\sL$ of $\sG(\R)$ are pure inner forms of each other and there is an identification 
$$
K\backslash X(\sP) = \ker\left[H^1(\R,\sL) \to H^1(\R,\sG)\right].
$$ 

\end{prop}
\begin{proof} Part (a) of the Proposition follows from the fact that inside any parabolic, any
  two maximal tori are conjugate, therefore,
  given two pairs of parabolics and maximal tori $(\sP_1,\sT_1), (\sP_2,\sT_2)$, if the parabolics $\sP_1$ and $\sP_2$ are conjugate
  in $\sG$, we can assume that there is a conjugation which takes the pair  $(\sP_1,\sT_1)$ to $ (\sP_2,\sT_2)$.
  Since any element in the normalizer of $\sT^c$ is by definition an element of the Weyl group  $W(\sG,\sT^c)$
  up to an element of $\sT^c$,
  this proves part (a).

  For the remaining parts of the proposition, we now make some general remarks first.
  \begin{enumerate}
\item  For any real reductive group $\sL(\R)$ with a Cartan involution $\theta$, the centralizer of a
  maximal compact torus in $\sL(\R)$
  is a maximal torus in $\sL(\R)$. This follows from the more elementary observation (applied to $Z_{\sL}(\sT)/\sT$
where $\sT$ is a   maximal compact torus in $\sL(\R)$, and proved most easily by a Lie algebra argument)
  that if on a real reductive group $\sH(\R)$ with a Cartan involution $\theta$, $\sH^\theta(\R)$ is finite,  then $\sH$ must be a torus. This implies
  that for the fundamental torus $\sT^c=\sT\,\sA$ in $\sG$, the centralizer of $\sT$ in $\sG$ is $\sT^c$.

\item   From the observations in the last paragraph, the root space decomposition of $\sG(\C)$ with respect to $\sT^c$ when restricted to
  $\sT$ gives rise to the root space decomposition of $\sG(\C)$ with respect to $\sT$.
    Define a positive system of roots in   $X^\star(\sT)$ to be compatible with a
    positive system of roots in  $X^\star(\sT^c)$ if under the natural surjective map on character groups
    $X^\star(\sT^c) \rightarrow X^\star(\sT)$, a root in  $X^\star(\sT^c)$ is positive if and only if
    it goes to a positive root in $X^\star(\sT)$. Clearly, a compatible system of positive roots on   $X^\star(\sT^c)$ and
    $X^\star(\sT)$ exists which we fix now.   Thus we have a Borel subgroup $\sB$ of $\sG(\C)$ defined using either of the positive systems of roots, which as $\theta$ operates by
  identity on $\sT(\R)$, preserves $\sB$, whereas since complex conjugation operates by $t\mapsto t^{-1}$ on $\sT(\R)$,
  $\bar{\sB}(\C)$ is the opposite Borel subgroup, i.e.,  $\sB \cap \bar{\sB}= \sT^c(\C)$. (This proves the existence of a
$\theta$-stable Borel subgroup of $\sG(\C)$.)
  Further, as the Cartan involution
  $\theta$ preserves $\sT^c$ and $\sB$, it permutes the simple roots of  $\sT^c$
  on $\sB$. Thus $\theta$-invariant parabolic
  subgroups $\sP$ of $\sG(\C)$ containing $\sB$ 
  are exactly of the form $\sP_S = \sP$ for any subset of the set $S$ of the simple roots of   $\sT^c$
  on $\sB$ such that $\theta(S)=S$. Clearly, for such $\theta$-invariant parabolics, $\bar{\sP}_S \cap \sP_S = \sL_S$, thus,
  $\theta$-invariant parabolics $\sP_S$  are $\theta$-stable; in fact the particular choice of $\sB$ played no role, therefore
any $\theta$-invariant parabolic
  containing $\sT^c$ is $\theta$-stable.

\item   Next, we observe that for any
  $\theta$-invariant real reductive subgroup   $\sL(\R)$ of $\sG(\R)$
  which is a Levi subgroup in a $\theta$-invariant parabolic $\sP$ in $\sG$,
  a fundamental torus in $\sL(\R)$ is a fundamental torus in $\sG(\R)$. For this,
  let $Z_\sL$ be the connected center of $\sL$, which is $\theta$-invariant as $\sL$ is $\theta$-invariant.
Let $Z_\sL=Z_cZ_s$ such that $\theta$ operates
by the identity on $Z_c$ and by inversion on $Z_s$. Now note that the centralizer of $Z_c$ and $Z_\sL$ in $\sG$ must be the same,
else there will be a root space of $Z_\sL$ on $\sP$ on which $Z_c$ acts trivially, hence acts via  $Z_\sL/Z_c$. But we are given
that $\sL(\R)$ is a Levi subgroup of a $\theta$-invariant parabolic $\sP \subset \sG$, whereas $\theta$ operates by inversion on  $Z_\sL/Z_c$,
a contradiction.
   Thus a
   $\theta$-invariant real reductive subgroup   $\sL(\R)$ of $\sG(\R)$
which is a Levi subgroup in a $\theta$-invariant parabolic $\sP$ in $\sG$
   contains, up to $\sG(\R)$ conjugacy,
  a fixed fundamental torus of $\sG(\R)$.

\item In a real reductive group $\sG(\R)$ with Cartan involution $\theta$, a fundamental Cartan is unique
  up to conjugacy by $K=\sG(\R)^\theta$. This is direct consequence of the conjugacy of maximal tori in $K$,
  and the definition of a fundamental Cartan subgroup of $\sG(\R)$
  as the centralizer of a maximal torus of $K$.

\item The image of $W(K,T)$ inside $W(\sG,\sT^c)^\theta$ is $W(G,T^c)$.
  \end{enumerate}

  In part (2) above, we have already proved the existence of a $\theta$-stable Borel subgroup $\sB$, and deduced
  when $Y_{\sT^c}$ is non-empty. 

By part (a), any two parabolic subgroups of $\sG(\C)$ containing $\sT^c$ which are conjugate in $\sG(\C)$ are conjugate by  $W(\sG,\sT^c)$. 
Now having fixed a $\theta$-stable parabolic $\sP=\sP_S$, for a set $S$ of simple roots of $\sT^c$ on $\sB$ which is
invariant under $\theta$, its conjugate $\sP^w = w\sP w^{-1}$
by  $w \in W(\sG,\sT^c)$,
is $\theta$-stable if and only if $w$ as an element of
$W(\sG,\sT^c)/W(\sL_S,\sT^c)$ is represented by an element in
$ W(\sG,\sT^c)^\theta =\{v \in W(\sG,\sT^c) | \theta(v)=v\} $. To see this,
observe that $\theta(\sP^w) = \sP^{\theta(w)}$, and therefore  $\theta(\sP^w) = \sP^{w}$, if and only if $ \theta(\tilde{w})=\tilde{w}$, where $\tilde{w}$ is the image
of $w$ in the coset space  $W(\sG,\sT^c)/W(\sL_S,\sT^c)$. Now we claim that a
$\theta$-invariant element of  $W(\sG,\sT^c)/W(\sL_S,\sT^c)$ is in fact represented by an element of  $W(\sG,\sT^c)^{\theta}$. Since $\theta$ preserves all the positive roots in $\sG$ as well as in $\sL_S$, this is immediately clear by considering the Kostant representative of $\tilde{w}$ in $W(\sG,\sT^c)$ which by the
uniqueness of the  Kostant representative, must belong to $W(\sG,\sT^c)^{\theta}$,
thus 
\[ [W(\sG,\sT^c)/W(\sL_S,\sT^c)]^\theta =
W(\sG,\sT^c)^{\theta}/W(\sL_S,\sT^c)^{\theta}.\]

It was observed at the end of part (2) that  $\theta$-invariant parabolic containing $\sT^c$ 
are $\theta$-stable, proving that 
\[Y_{\sT^c}(\sP_S)= W(\sG,\sT^c)^{\theta}/W(\sL_S,\sT^c)^{\theta}.\]

The assertion on the orbits of $W(G,T^c)$ on $Y_{\sT^c}(\sP_S)$ being parametrized by the double cosets \[W(G,T^c)\backslash
W(\sG,\sT^c)^\theta/W(\sL_S,\sT^c)^\theta,\]
is of course clear.

Now we prove  part (c) of the Proposition. By (4) above, for any $\theta$-stable parabolic $\sQ$, the Levi subgroup
$\sM= \sQ \cap \bar{\sQ}$ contains a $K$-conjugate of $\sT^c$.  Therefore by (5), $K$-conjugacy classes of such
parabolics $\sQ$ is the same as the set of $W(G,T^c)$ orbits on  $Y_{\sT^c}$ (which is the set of
$\theta$-stable parabolics $\sP$ for $\sG$
containing $\sT^c$ and belonging to a single
$\sG(\C)$-conjugacy class of parabolics of $\sG(\C)$). This proves (c) as a consequence of (b).

Now we prove (d).   Recall now the well-known fact 
  that all the automorphisms of a torus over $\R$ which is either split or compact are defined over $\R$. Since an element $w \in W(\sG,\sT^c)^\theta$ preserves the decomposition $\sT^c=\sT\, \sA$ as product of compact and split tori, the automorphism of $\sT^c$ induced by $w$ is defined over $\R$, which means that
  $w\bar{w}^{-1} \in \sT^c(\C)$. This proves that  the twisted Levi
  $\sL^{w}$ is obtained from $\sL$ by twisting by a cocycle in
  $H^1(\R,\sT^c)$.    The mapping $W(\sG,\sT^c)^\theta \to Z^1(\R,\sT^c)$ defined by $w \mapsto w\bar{w}^{-1}$ defines a mapping   $W(G,T^c)\backslash W(\sG,\sT^c) \to H^1(\R,\sT^c)$ 
with image in the kernel of $H^1(\R,\sT^c) \to H^1(\R,\sG)$.  That this gives a bijection 
$$
W(G,T^c)\backslash W(\sG,\sT^c) \to \ker\left[H^1(\R,T^c) \to H^1(\R,\sG)\right]
$$ 
is a consequence of the description by Borovoi \cite[Theorem 9]{Boro} of $H^1(\R,\sG)$ as the quotient of the set  $H^1(\R,\sT^c)$ by the action of $W(\sG,\sT^c)^\theta$ defined by $t \mapsto n^{-1}t\bar{n}$ (for $t \in \sT^c(\C)$ with $t\bar{t}=1$ and $n \in N_{\sG(\C)}(\sT^c)$).  (For more details about this action and the computation of the isotropy as $W(G,T^c)$ we refer to the proof of Theorem \ref{serre} below.) 
This is the case of $\sL=\sT^c$, the general case follows easily.     
  \end{proof}

The following lemma relates the Levi subgroups of $\theta$-stable parabolic subgroups of $\G(\C)$ of the previous proposition
in terms of the dual group.

\begin{lemma}\label{theta}
  If $\sG$ is a connected reductive group over $\R$ with $\theta$ a Cartan involution on $\sG(\R)$, and
  the dual group ${}^L\sG = \wG\cdot \langle j\rangle$
  then the twisted Levi subgroup $\sL$  of Proposition \ref{para} has for its dual group ${}^L\sL = \wL\cdot \langle \omega_{\wL}\omega_{\wG}j\rangle$,
  thus is the one which appears in Theorem \ref{cohomo}, and Remark \ref{VZcharacter} from the perspective of dual groups.
  \end{lemma}
\begin{proof}
  By Remark \ref{dualgroup}, the Cartan involution $\theta$ which operates on $(\sG,\sB,\sT^c)$, gives rise to a pinned automorphism
${}^\vee \theta$ of $\wG$
with \[{}^\vee \theta
= (-\omega_{\wG})\cdot j = \iota_{\wG}\cdot j, \tag{*}\] where $\iota_{\wG}$ is the opposition involution. Since the Cartan involution $\theta$
leaves $\sL$ invariant, let $\theta_\sL$ be $\theta$ restricted to $\sL$ which is a Cartan involution of $\sL$. The involution ${}^\vee \theta$ of $\wG$ leaves $\wL$ invariant and is ${}^\vee \theta_L$ of $\wL$. Now (*) restricted to $\sL$ gives us:
\[{}^\vee \theta_L= (-\omega_{\wG})\cdot j 
= (-\omega_{\wL} \omega_{\wL}\omega_{\wG} )\cdot j = (-\omega_{\wL})\cdot j_\sL ,\]
for $j_\sL= \omega_{\wL}\omega_{\wG} \cdot j$, proving the lemma.
\end{proof}

For $\theta$-stable parabolic subalgebras for $\g_0$, Proposition \ref{para} and Lemma \ref{theta} implies the following: 

\begin{prop}  \label{Sigmaprop}
For $\sG$ connected reductive there is a natural surjective map 
\begin{equation}
\Sigma: \left\{\begin{array}{c} \text{$\theta$-stable parabolic} \\ \text{subalgebras for $\g_0$} \end{array}\right\} 
\Big{/} K
\longrightarrow 
\mathcal{L} =\left\{\begin{array}{c} \text{standard Levi subgroups $\wL \subset \wG$}\\ \text{invariant under $\omega_{\wG}\cdot j$} \end{array} \right\} 
\notag 
\end{equation}
with the following properties:  

(1) 
If $\q \in \Sigma^{-1}(\wL)$ has associated Levi $\sL$ then the fibre is identified with 
\begin{equation}
\begin{split}
\Sigma^{-1}(L) 
= W(G,T^c)\backslash W(\sG,\sT^c)^\theta/W(\sL,\sT^c)^\theta. 
\end{split}
\notag
\end{equation}
The fibre is also identified with a set of pure inner forms of $\sL$: 
$$
\Sigma^{-1}(\wL)
= \ker\left[H^1(\R,\sL) \to H^1(\R,\sG)\right]. 
$$
For each choice of a $\theta$-stable Borel subalgebra $\b \supset \t^c$, the fibre $\Sigma^{-1}(\wL)$ contains a unique $\b$-standard element.

(2) 
If $E$ is a finite-dimensional irreducible representation of $\sG(\C)$ and $\q=\l+\u$ is $\theta$-stable for $\g_0$, then  
$$
\dim(E^\u)=1\ \Longleftrightarrow\  \Sigma(\q) \in \mathcal{L}_E = 
\left\{\ \wL\in \mathcal{L}: \langle \lambda_E, \alpha\rangle=0 \text{ for } \alpha\in \Phi(\wT^c,\wL)\ \right\}.
$$
\end{prop} 

\begin{proof} 
This is more-or-less an immediate consequence of the Proposition \ref{para}, but we spell things out explicitly as this is useful in computations.  

(1)  
The parametrization of $\theta$-stable parabolics has the following explicit form.  Fix a fundamental Cartan subalgebra $\t^c$ in $\g$ with $\t^c_0 = \t_0 \oplus \a_0$ according to the action 
(by $+1$ or $-1$, respectively) of $\theta$. 
Let $\Phi(\g,\t^c)$ be the roots, which are real-valued on $\a_0+ i \t_0$, and let $\g^\alpha$ denote the $\alpha$-root space for $\alpha \in \Phi(\g,\t^c)$.   For $\lambda_0 \in i\t_0^*$ define 
$\q=\q(\lambda_0)= \l + \u$ where 
$$
\l = \l(\lambda_0) = \t^c \oplus \sum_{\{\alpha \in \Phi(\g,\t^c):\langle \lambda_0, \check{\alpha}\rangle =0\}} \g^\alpha, \quad 
\u =\u(\lambda_0) = \sum_{\{\alpha \in \Phi(\g,\t^c):\langle \lambda_0, \check{\alpha}\rangle >0\}} \g^\alpha.
$$  
Then $\q$ is $\theta$-stable, $\theta(\l)=\l$, and $\theta(\u)=\u$.  
Conversely, since every $\theta$-stable parabolic for $\g_0$ contains a fundamental Cartan (by Proposition~\ref{para}) and since all fundamental Cartans are conjugate by $K$, 
we see that up to $K$-conjugacy, every $\theta$-stable parabolic  for $\g_0$ is of the form $\q(\lambda_0)$.

The definition of the map $\Sigma$ is the obvious one:  Fix a $\theta$-stable fundamental torus $\sT^c$ and $\theta$-stable Borel subalgebra $\sB$ of $\sG_\C$ containing it.  (The existence of such a $\sB$ was proved in the proof of the previous proposition.) 
By the previous proposition a $\theta$-stable parabolic $\q$ contains a fundamental Cartan, which, after conjugation by $K$, we may assume is $\t^c$.  Forming the dual group with respect to 
these choices there is an isomorphism of based root data  
$$ 
(X^\star(\mathsf{T}^c), \Delta_\mathsf{G}, X_\star(\mathsf{T}^c), \check{\Delta}_\mathsf{G}) \cong 
(X_\star(\widehat{\mathsf{T}}^c), \check{\Delta}_{\widehat{\mathsf{G}}}, X^\star(\widehat{\mathsf{T}}^c), \Delta_{\widehat{\mathsf{G}}}) 
$$
where $\Delta_\sG$ is the set of positive roots fixed by $\sB$ then gives a Levi $\wL$ in $\wG$ which is invariant under $\omega_{\wG}\cdot j$.  The parametrization of the fibres in terms of double cosets or in terms of Galois cohomology is contained in the previous proposition.  (We leave it to the reader to check that the map $\Sigma$ does not depend on the choices made to define it.)

(2) For this note that the condition $\dim E^\u=1$ is independent of $\q$ in $\Sigma^{-1}(Q)$, i.e. it holds for all $\q \in \Sigma^{-1}(\wL)$ if it holds for one, because the corresponding $\theta$-stable parabolics are conjugate under $\mathsf{G}(\C)$.   
It remains to check that this condition is the same as the condition $\Sigma(\q) \in \mathcal{L}_E$.  We may assume $\q$ is the $\b$-standard element of $\Sigma^{-1}(\Sigma(\q))$.  Then $E^\u=H^0(\u,E)$ as an $\l$-module is the irreducible highest-weight module for $\l$ with highest weight $\lambda_E$.  This has dimension one if and only if $\langle \lambda_E, \alpha \rangle=0$ for all roots $\alpha$ appearing in $\l$, which is precisely the condition $\Sigma(\q) \in \mathcal{L}_E$.  
\end{proof}

\medskip

We now discuss the $AJ$-packets of representations defined in Theorem~\ref{AJ1}.  The definition uses  Zuckerman's cohomological induction from $\theta$-stable parabolic subalgebras, so we quickly review some of the formal properties of these functors first. 
The basic construction of cohomological induction 
(see \cite[6.3]{V} or \cite[V.1]{KV}) associates with a $\theta$-stable parabolic subalgebra $\q=\l+\u$ for $\g_0$ and an $(\mathfrak{l},L\cap K)$-module $\pi$ a sequence of derived functor $(\g,K)$-modules 
$$
\mathcal{R}^i_\q(\pi) = \mathcal{R}_{(\q,L\cap K)}^{(\g,K), i}(\pi)
$$  
for $i\geq 0$.  (The more precise notation 
$\mathcal{R}_{(\q,L\cap K)}^{(\g,K), i}$ instead of $\mathcal{R}_\q^i$ will only be used when we wish to emphasize the source and target categories of the functor.)  These are of finite length if $\pi$ is of finite length.  The construction has the following properties: 
\begin{enumerate}
\item[(i)]
If $\q$ and $\q'$ are $K$-conjugate, so that $L'=\mathrm{Int}(k)(L)$, then $\mathcal{R}^i_\q(\pi)\cong \mathcal{R}^i_{\q'}(\pi')$ where $\pi'=\mathrm{Int}(k^{-1})\circ \pi$. 
\item[(ii)]
If $\h \subset \l$ is a Cartan subalgebra and 
$\pi$ has infinitesimal character $\chi_\pi \in W(\l,\h)\backslash \h^*$ then the infinitesimal character of $\mathcal{R}^i_\q(\pi)$ is $\chi_\pi + \rho(\u)\in W(\g,\h)\backslash \h^*$, where $\rho(\u)$ is the half-sum of roots of $\h$ appearing in $\u$. 
\end{enumerate} 
(The proof of (ii) in \cite{V} assumes $G$ connected, but 
 \cite[Cor.~5.25(b)]{KV} covers our situation.)

The cohomological properties of the representations in an $AJ$-packet are summarized in the next proposition, which completes the proof of Theorem~\ref{AJ1}, modulo some loose ends which we will tie up after the proof of the proposition.  The proofs are straightforward given the formalism of cohomological induction from \cite{KV}  (essentially Frobenius reciprocity) and  the irreducibility and unitarity theorems of \cite{KV}. 
(In the connected case they reduce to results in \cite{V,VZ}.) 
Unfortunately the discussion is made notationally complicated by the fact that we must keep track of the source and target categories of various functors.  

\begin{prop}\label{cohprop}
(1) The packets $\Pi_{E,\wL}$ consist of irreducible unitary representations with $(\g,K)$-cohomology with coefficients in $E^\vee$.  More precisely, for $R=\dim(\u\cap \p)$, 
$$
H^*(\g,K, \mathcal{R}_\q^S(\pi_L)\otimes E^\vee) = H^{*-R}(L^u/L\cap K).
$$ 
Conversely, every irreducible unitary $(\g,K)$-module $V$
with $H^*(\g,K,V\otimes E^\vee)\neq \{0\}$ belongs to some packet $\Pi_{E,\wL}$. 

(2) The packets $\Pi_{E,\wL,c}$ consist of irreducible unitary representations with $H^*(\g,K_0,V\otimes E^\vee)\neq \{0\}$.  Conversely, every irreducible unitary $(\g,K_0)$-module $V$ with $H^*(\g,K_0,V\otimes E^\vee)\neq \{0\}$ belongs to some packet $\Pi_{E,\wL,c}$. 
\end{prop} 

\begin{proof}

As usual, for  a $\theta$-stable parabolic $\q=\l+\u$ for $\g_0$ we write  $L={\rm Stab}_G(\q)$, $S=\dim(\u\cap \k)$, $R=\dim(\u\cap \p)$ etc.  The following facts will be used below:   Since ${\rm Stab}_{G_0}(\q)$ is connected (\cite[Lemma 5.10]{KV}),  $L_0 = {\rm Stab}_{G_0}(\q)=L \cap G_0$ and $(L\cap K)_0=L\cap K_0$, so that $L/L_0 \to G/G_0$  and $(L\cap K)/(L\cap K)_0 \to K/K_0$ are injective.  If $L\,G_0=G$ (i.e. $L$ meets every component of $G$) then this implies that $L/L_0=G/G_0$ and $(L\cap K)/(L\cap K)_0=K/K_0$.

(1) 
We recall some facts from \cite{KV}.   
\begin{enumerate} 

\item[(i)]
For an $(\l,L\cap K)$-module $\pi$ we have a natural isomorphism 
\begin{equation} 
\mathcal{R}^{(\g,K),i}_{(\q,L\cap K)}(\pi) = 
I_{(\g,(L\cap K)K_0)}^{(\g,K)} 
\mathcal{R}^{(\g,(L\cap K)K_0),i}_{(\q,L\cap K)}(\pi) 
\label{factoring}
\end{equation} 
for all $i$, where  
$I_{(\g,(L\cap K)K_0)}^{(\g,K)}$ is the ordinary exact induction functor between compact groups for $(L\cap K)K_0 \subset K$.  
(This is an elementary consequence of the definitions, see \cite[p.~332]{KV}.)
\item[(ii)]
If $\pi$ is an $(\l,L\cap K)$-module such that 
$\mathcal{R}_{(\q,L\cap K)}^{(\g,K),q}(\pi)=\{0\}$ for $q\neq q_0$, and $X$ is any $(\g,K)$-module then 
there is a first quadrant Frobenius reciprocity spectral sequence with 
\begin{equation}
\begin{split}
E^{p,q}_2 &= \Ext_{(\l,L\cap K)}^p\left(H_q\big{(}\u,\mathcal{F}_{(\g,K)}^{(\q,L\cap K)}(X)\big{)},\pi\otimes \wedge^{\dim(\u)}\u\right)  
\\ & \qquad \qquad \Rightarrow  \quad 
\Ext_{(\g,K)}^{p+q-q_0}\left(X, \mathcal{R}_{(\q,L\cap K)}^{(\g,K),q_0}(\pi)\right). 
\end{split}
\label{frss}
\end{equation} 
Here $\mathcal{F}_{(\q,L\cap K)}^{(\g,K)}$ is the forgetful functor, i.e. $\mathcal{F}_{(\q,L\cap K)}^{(\g,K)}(X)$ is simply the notation of \cite{KV} for $X$ considered as a $(\q,L\cap K)$-module by restriction. 
(See \cite[Corollary~5.121(b) and Theorem~5.120(b)]{KV} for (\ref{frss}) or, with slightly different indexing and notation, \cite[Corollary~6.3.4]{V}.)  
\end{enumerate}

Now consider our situation, where $\pi=\pi_L:L \to \C^\times$ is the  unitary character of $L$ on $E^\u$.  For $i\neq S$ we have $\mathcal{R}_{(\q,L\cap K)}^{(\g,K),i}(\pi)=\{0\}$ for $i \neq S$ by combining (i) and Theorems~5.35 and 5.109 of \cite{KV}, by which 
$\mathcal{R}^{(\g,(L\cap K)K_0),i}_{(\q,L\cap K)}(\pi) = \{0\}$ for $i \neq S$ because $\pi$ satisfies the positivity condition 
\begin{equation} 
\mathrm{Re}\, \langle d\pi|_{\t^c} + \rho_\q, \alpha \rangle > 0
\quad \text{ for } \alpha\in \Phi(\t^c,\u), 
\label{positivity}
\end{equation} 
which is easily verified. 
An essential hypothesis for these theorems is that the $\theta$-stable Levi in question meets every component of the group, which is tautologically true for $L\subset L\,G_0$.  

Thus (ii) gives a  spectral sequence for $X=E$.  It  collapses in this situation because the infinitesimal characters of $H_q(\u,E)$ and $\pi\otimes \wedge^{\dim(\u)}\u$ can only match for $q=\dim(\u)$, so the nonzero terms of $E_2$ are all in a single column.  With $R=\dim(\u\cap \p)$, we have (writing 
$E$ for $\mathcal{F}_{(\q,L\cap K)}^{(\g,K)}(E)$ to simplify notation):
\begin{equation}
\begin{split}
H^*\left(\g,K,\mathcal{R}^{(\g,K),S}_{(\q,L\cap K)}(\pi) \otimes E^\vee\right) 
& = \Ext^*_{(\g,K)}(E, \mathcal{R}^{(\g,K),S}_{(\q,L\cap K)}(\pi))
\\
& = \Ext^{*-R}_{(\l,L\cap K)}\left(H_{\dim(\u)}(\u,E), \pi \otimes \wedge^{\dim(\u)}\u\right)
\\
&=
H^{*-R}(\l,L\cap K, \pi\otimes (E^\u)^\vee).  
\end{split}
\notag
\end{equation}
Thus if $\pi$ is the unitary character of $L$ on $E^\u$ then 
$$
H^*\left(\g,K,\mathcal{R}^{(\g,K),S}_{(\q,L\cap K)}(\pi)\otimes E^\vee \right) = 
H^{*-R}(L^u/L\cap K), 
$$  
while if $\pi$ is a unitary character other than the one appearing in $E^\u$ then the cohomology vanishes. 
In particular, $\mathcal{R}^S_\q(\pi)$ is a representation with cohomology with coefficients in $E^\vee$.  The unitarity is a consequence of (i) above and the unitarity of $\mathcal{R}^{(\g,(L\cap K)_0K), S}_{(\q,L\cap K)}(\pi)$, which is available under the positivity condition (\ref{positivity}) by \cite[Corollary~9.70]{KV}.   

The irreducibility of $\mathcal{R}^S_\q(\pi)$ holds under (\ref{positivity}) by \cite[Corollary~8.28]{KV}.

Let us now check the converse, i.e. that any irreducible unitary  $V$ with $H^*(\g,K,V\otimes E^\vee) \neq \{0\}$ 
appears as  in a packet $\Pi_{E,\wL}$.  The restriction $\mathcal{F}_{(\g,K)}^{(\g,K_0)}V$ is $(\g,K_0$)-cohomological, hence admits a nonzero map to some $\mathcal{R}^{(\g,K_0),S}_{(\q,(L\cap K)_0)}(\lambda)$ by \cite{VZ} for connected groups.  By reciprocity for the exact induction functors 
$\mathcal{F}_{(\g,K)}^{(\g,K_0)}$ and $I_{(\g,K)}^{(\g,K_0)}$
there is a nonzero map
$$
V \to I_{(\g,K_0)}^{(\g,K)} \mathcal{R}_{(\q,L\cap K_0)}^{(\g,K),S}(\lambda) 
= I_{(\g, (L\cap K)K_0)}^{(\g,K)} 
I^{(\g, (L\cap K)K_0)}_{(\g,K_0)}
\mathcal{R}_{(\q,L\cap K_0)}^{(\g,K_0),S}(\lambda).  
$$
Now 
$$
I^{(\g, (L\cap K)K_0)}_{(\g,K_0)}
\mathcal{R}_{(\q,L\cap K_0)}^{(\g,K_0),S}(\lambda)
=
\mathcal{R}_{(\q,L\cap K)}^{(\g,(L\cap K)K_0),S} I_{(\q,L\cap K_0)}^{(\q,L\cap K)}(\lambda) 
$$
because of transitivity of induction.  (The functors $I$ are actually cohomological induction functors, only exact.) 
Now $\lambda$ is the unitary character of $L_0$ on $E^\u$, and 
the induction $I_{(\q,L\cap K_0)}^{(\q,L\cap K)}(\lambda)$ is a sum of characters, exactly one of which is the unitary character by which $L$ acts on $E^\u$.  Thus there is a unique summand which has $(\g,(L\cap K)K_0)$-cohomology (by the same computation as above), and hence $V$ must map nontrivially into that summand, which is the irreducible representation $\mathcal{R}_{(\q,L\cap K)}^{(\g,K),S}(E^\u) = I_{(\g,(L\cap K)K_0)}^{(\g,K)} \mathcal{R}^{\g,(L\cap K)K_0,S}_{(\q,L\cap K)}(E^\u)$ in the packet $\Pi_{E,\wL}$.   This concludes the proof of (1) in the proposition. 

(2) 
follows from (1) since  we have 
$$
\mathcal{R}_{(\q,L\cap K)}^{(\g,K)}(\pi) \otimes \varepsilon = \mathcal{R}_{(\q,L\cap K)}^{(\g,K)}(\pi \otimes \varepsilon|_{L/L_0})
$$  
for any finite order character $\varepsilon: G/G_0\to \C^\times$ as a consequence of the definitions; we leave this to the reader to check. 
\end{proof} 

Let us now check that 
Propositions~\ref{Sigmaprop} and \ref{cohprop} combine to prove (1)-(3) of Theorem~\ref{AJ1}.  
Let $\Pi_{E,\wL}$ be an $AJ$-packet as in the theorem.  The fact that its elements are pairwise inequivalent follows in the connected case from consideration of $K$-types (specifically, from \cite[Proposition 6.1]{VZ} or \cite[Proposition 10.24]{KV}).  For the general case we again use the factoring in (\ref{factoring}) and use the assertion of \cite[Theorem 10.44]{VZ} for the $K$-types of 
$\mathcal{R}^{(\g,(L\cap K)K_0),i}_{(\q,L\cap K)}(E^\u)$.  This proves (1), modulo the assertion that other packets are twists by characters of $G/G_0$.  This follows from the  surjectivity of $H^1(\R,Z(\wG)) \to H^1(\R,Z(\wL))$, which 
follows from  Proposition~\ref{uniqueness}. 

The assertions of (2) of the theorem are contained in Proposition~\ref{cohprop}.

For (3) about tempered packets, we note that a representation $\mathcal{R}^S_\q(\pi)$ is tempered if and only if $[\l,\l]\subset \k$ (see \cite{VZ}).    If $\wL=\wT^c$ then $\b \in \Sigma^{-1}(\wT^c)$ is $K$-conjugate to a Borel containing $\t^c$ and hence $[\t^c,\t^c]=0$, so the first statement is immediate.  For the second statement, if $\q$ is such that $[\l,\l]\subset \k$ then 
$\mathcal{R}^S_\q(\pi) = \mathcal{R}^S_{\t^c}(\pi|_{T^c})$, so that the representation already appears in a packet $\Pi_{\wT^c,c}$.

This concludes the proof of Theorem~\ref{AJ1}.

\begin{remark}
The following remarks on infinitesimal characters may help to explain the parametrization of $AJ$-packets.   The remarks here will not be used in the sequel.  

Fix a $\theta$-stable fundamental Cartan $\t^c$ and $\theta$-stable Borel $\b\supset \t^c$ as above. 
Suppose that $(\q, \pi)$ is a pair 
consisting of a $\theta$-stable parabolic $\q=\l+\u$ containing $\t^c$ and a unitary character $\pi:L \to \C^\times$.  
As recalled above, the infinitesimal character of $\mathcal{R}^i_\q(\pi)$ is 
$$
\lambda_\pi + \rho_\q \in W(\sG,\sT^c)\backslash \t^{c*}
$$ 
where $\lambda_\pi =d\pi|_{\t^c}$ and $\rho_\q=\rho(\l)+\rho(\u)$ for $\rho(\l)$ the half-sum of any positive system for $\Phi(\l,\t^c)$.  The infinitesimal character of a finite-dimensional irreducible  representation $E$ is  
$$
\lambda_E + \rho \in W(\sG,\sT^c)\backslash \t^{c*}
$$ 
where $\lambda_E \in X^*(\sT^c)$ is the $\b$-highest weight and $\rho$ is the half-sum of roots in $\Phi(\t^c,\b)$.  The infinitesimal characters of $\mathcal{R}^i_\q(\pi)$ and $E$ agree if and only if  
$\lambda_\pi + \rho_\q$ and $\lambda_E + \rho$ are $W(\sG,\sT^c)$-equivalent. 
If, furthermore, $\mathcal{R}^i_\q(\pi)$ is unitary, then $\theta(\lambda_E) = \lambda_E$ (see e.g. \cite[II.6.13]{BW} or \cite[1.3]{BC}), and assuming the positive system for $\l$ is chosen $\theta$-stable (as it may be), so that $\theta(\rho(\l))=\rho(\l)$, it 
follows  that $\lambda_\pi+\rho_\q$ and $\lambda_E+\rho$ are in fact 
$W(\sG,\sT^c)^\theta$-equivalent. 
Now consider the infinitesimal character equation, in which we think of  $\lambda_E$ as fixed and we need to solve for $\q$ and $\lambda_\pi$ to find derived functor modules $\mathcal{R}_\q^i(\pi)$ with matching infinitesimal character: 
\begin{equation}
\lambda_\pi + \rho_\q = 
\lambda_E + \rho \quad \text{ mod }\ W(\sG,\sT^c)^\theta.
\label{equation:infchar}
\end{equation} 
This has some natural `tautological' solutions whose existence depends only on $\Sigma(\q)$. 
First take $\q$ to be $\b$-standard.  Choosing the positive system for $\l$ given by $\b\cap \l$ we see that $\rho_\q = \rho$ and (\ref{equation:infchar}) has the solution $\lambda_\pi = \lambda_E$, which is valid, i.e. can be the infinitesimal character of a $\pi:L \to \C^\times$, if and only if 
$$
\langle \alpha, \lambda_E \rangle =0 \ \text{ for all }\alpha
\in \Phi(\l,\t^c)
$$   
which is the condition $\Sigma(\q) \in \mathcal{L}_E$ defined above.  
For $w \in W(\sG,\sT^c)^\theta$ and $\q=\q(w\lambda_0)$ where $\q(\lambda_0)$ is $\b$-standard then $\rho_\q = w\rho$ (for the correct choice of $\rho(\l(w\lambda_0))$) and (\ref{equation:infchar}) has the solution $\lambda_\pi=w\lambda_E$, provided that 
$\langle w\lambda_E, \alpha \rangle=0$ for all $\alpha \in \Phi(\l(w\lambda_0),\t^c)$, which is again the condition $\langle\lambda_E, \alpha\rangle =0$ for $\alpha \in \Phi(\l(\lambda_0),\t^c)$. 
(The element $w$ is only determined up to its $W(\sL,\sT^c)$-coset but the solution $\lambda_\pi = w\lambda_E$ does not depend on this.) 
Since $\Sigma(\q(w\lambda_0)) = \Sigma(\q(\lambda_0))$ by definition, this condition can be rewritten as $\langle \lambda_E, \alpha\rangle=0$ for every root $\alpha \in \Delta_{\wG}$ appearing in the Levi of $\Sigma(\q)$, i.e. $\Sigma(\q) \in \mathcal{L}_E$.

The upshot of these observations is that  for each $\wL \in \mathcal{L}_E$, there is a family of $(\g,K)$-modules indexed by $\Sigma^{-1}(\wL)$ having the infinitesimal character of $E$, namely the modules $\mathcal{R}^i_\q(E^\u)$.  
This is precisely the $AJ$-packet $\Pi_{E,\wL}$.   In the connected case (so that any $\theta$-stable Levi $L$ is connected, see \cite[Lemma~V.5.10]{KV}), this the only choice, while if $L$ is disconnected one could also twist $E^\u$ by a finite-order character of $L$, which leads to general packets.   In fact, a general theorem of Salamanca-Riba \cite[Theorem 1.8]{SR} shows (for $G$ connected) that any irreducible unitary $(\g,K)$-module which shares the infinitesimal character of a finite-dimensional representation of $\sG(\C)$ is necessary of the form $A_\q(\lambda) = \mathcal{R}^S_\q(\C_\lambda)$ for a $\theta$-stable $\q$ and unitary character $\lambda$.   Given this, we see that $AJ$-packets are the natural solution to the question of parametrizing the irreducible unitary representations with 
the infinitesimal character of a finite-dimensional representation of $\sG(\C)$ in terms of data in the dual group.

\end{remark}

\section{Adams-Johnson packets:  Examples} \label{AJexam}

This section discusses examples of Adams-Johnson packets for some groups, with a view to illustrating the results of the previous section and some results in later sections.  These examples are based on computations with the parametrization of $\theta$-stable parabolics (cf. Proposition \ref{Sigmaprop} and its proof) 
and we do not give all details in each case.

A basic feature of the parametrization of cohomological representations by $\theta$-stable parabolic subalgebras (for groups other than $GL(n,\R)$) is that it has redundancies: It may happen that $A_{\q_1}$ and $A_{\q_2}$ are isomorphic while $\Sigma(\q_1)\neq \Sigma(\q_2)$ (so that, in particular, $\q_1$ and $\q_2$ are not $K$-conjugate).  
This forces different $AJ$-packets of representations to overlap, and sometimes even coincide. 
This is already seen in the example of  compact groups (see Example \ref{example:compact} below), but this degeneracy propagates to noncompact groups.   In fact there is a simple general criterion for when this happens \cite[Proposition 1.11]{SR}:  The derived functor modules $\mathcal{R}^S_{\q}(\pi)$ and $\mathcal{R}^S_{\q'}(\pi')$, for $\pi, \pi'$ unitary characters of $L, L'$ satisfying the necessary positivity condition (\ref{positivity}) to contribute to $AJ$-packets, are isomorphic if and only if (1) $\u\cap \p = \u'\cap \p$ and (2) $\pi|_{L\cap L'} = \pi'|_{L \cap L'}$. 
This criterion, together with the explicit parametrization of $\theta$-stable parabolics modulo $K$-conjugacy,  are used in the examples below.

\begin{example}\label{example:compact} 
(The compact case)  
Let $\sG$ be connected, reductive,  and anisotropic over $\R$, so that $G=\sG(\R)$ is a compact connected group.  
Let $\t_0$ be a Cartan subalgebra and $\b \supset \t$ a Borel subalgebra of $\g$.  Since $\sG$ is compact, $\theta$ is the identity automorphism, and every $\q \supset \b$ (including $\b$ itself) is a $\theta$-stable subalgebra for $\g_0$  (complex conjugation takes each root to its negative,  so that the opposition condition is vacuously true; alternately we can use Prop.~11 or Remark~\ref{dscase} on the group level).  Thus any standard Levi  subgroup of $\wG$ is $\omega_{\wG}\cdot j$-invariant, i.e. belongs to $\mathcal{L}$.   The condition $\dim E^\u=1$ picks out a set of standard Levi subgroups $\mathcal{L}_E$ in $\wG$ and hence a set of  parabolic subalgebras for $\g_0$ invariant under conjugation by $\sG(\C)$.  For each $\wL \in  \mathcal{L}_E$  in $\wG$ the fibre $\Sigma^{-1}(\wL)$ is a singleton, and choosing $\p = \mathfrak{m}+\mathfrak{n}\in \Sigma^{-1}(\wL)$ gives the singleton $AJ$-packet 
$$
\Pi(\psi)=\{E=\mathcal{R}^{\dim \n}_\mathfrak{p}(E^\n)\} 
$$  
(for any $\psi$ with $\psi_\C$ corresponding to $\wL$). 
This construction of $E$ by cohomological induction from the one-dimensional representation of the Levi on $E^\n$ is essentially Kostant's  Borel-Weil theorem (see \cite[IV.9, IV.11]{KV}), especially \cite[Prop.~4.173]{KV}).  

Note that the representation $E$ has many $A$-parameters.  This degeneracy is necessary to account for global  automorphic forms on groups which are compact at the real places.  (Otherwise, automorphic representations on $\sG(\A)$ with $\sG(\R)$ compact would be tempered!)

\end{example}

\begin{example}\label{dscase} 
(The discrete series case)  Suppose that the derived group of $\sG$ has a discrete series.  In this case every standard Levi subgroup of $\wG$ is  $\omega_{\wG}\cdot j$-invariant because this element acts as $t \rightarrow t^{-1}$ on $\wT$.  Thus, up to twisting by characters of $G/G_0$, the $AJ$-packets are parametrized by the parabolics of the dual group. 

The criterion for coincidence of different $A_{\q}$'s above shows that if $\mathsf{G}(\R)$ has discrete series,  and $\q$ is such that $\mathsf{L}(\R)$ is compact, then $A_\q$ is a discrete series representation, though $\Sigma(\q)$ need not be the maximal torus $\wT^c$ in $\widehat{\mathsf{G}}$.   
On the other hand, the same discrete series representation must appear as $A_{\b}$ for a suitable $\theta$-stable Borel subalgebra $\b$, for which $\Sigma(\b)$ is the maximal torus $\wT^c$ in $\wG$.

\end{example}

\begin{example} \label{spehq} (The Speh representation)  Let  $G=\GL(2n,\R)$. 
A simple example which might help orient the reader in  Proposition~\ref{Sigmaprop} is the $\theta$-stable parabolic for $\GL(2n,\R)$ which gives the Speh representation.  The Dynkin diagram of $\g$ with respect to a  fixed fundamental $\theta$-stable Cartan and $\theta$-stable Borel subgroup is the diagram $A_{2n-1}$, with $\theta$ acting by  $\alpha_i \leftrightarrow \alpha_{2n-i}$ for $1 \leq i \leq n$.  (In this case $\gamma$ is trivial, as it is for any (inner form of a) split group, and $\iota=\theta$.)   The standard $\theta$-stable parabolic subalgebra $\mathfrak{q}$ with Levi $\mathfrak{l}$ containing exactly the simple roots $\{\alpha_i:i \neq n\}$ has $\mathfrak{l} \cong \mathfrak{gl}(n,\C)^2, \mathfrak{l}_0\cong \mathfrak{gl}(n,\C)$, and $\mathsf{L} = {\rm R}_{\C/\R}\GL(n)$.  (For explicit matrix descriptions of these we refer to Example 1 on p.~586 of \cite{KV}.) 
The dual group is $\widehat{\mathsf{G}} = \GL(2n,\C)$, and for the standard choice of Borel subgroup,  $\Sigma(\q)$ is the $(n,n)$ parabolic and $\widehat{\mathsf{L}}=\GL(n,\C)^2$ is the $(n,n)$ Levi.  The representation $\mathcal{R}^S_\q(\C)$ is the Speh representation, which is cohomological for trivial coefficients and constitutes a singleton packet, and its  $A$-parameter is $\sigma_n \otimes [n]$.  \end{example}

\begin{example} Let $G=\GL(N,\R)$. 
The $\theta$-stable parabolic subalgebras $\q$  for $\GL(N,\R)$ for general $N$ are classified in terms of $\theta$-stable subsets of the simple roots with respect to a fundamental Cartan $\t^c$ and $\theta$-stable Borel $\b$ (see Proposition~\ref{para}).  Since the $\theta$-stable Borel is unique up to $K$-conjugacy for $\GL(N,\R)$, all $AJ$-packets are necessarily singletons. 
Up to $K$-conjugacy, a $\theta$-stable parabolic $\q$ is given by an ordered partition 
$$
(n_d, \cdots, n_1, n_0, n_1, \cdots, n_d)
$$ 
with $n_0 + 2\sum_{k\geq 1} n_k=N$ and the Levi is 
$$
\sL= \GL(n_0) \times \prod_{k\geq 1} {\rm Res}_{\C/\R}\GL(n_k)
$$ 
with $\wL = \GL(n_0,\C) \times \prod_{i\geq q} \GL(n_i,\C)^2$. 
The corresponding $AJ$-packet $\Pi_{\C,\wL}$ contains the single representation $\mathcal{R}^S_\q(\C)$ (or more generally, $\Pi_{E,\wL}=\{\mathcal{R}^S_\q(E^\u)\}$ if $\wL \in \mathcal{L}_E$), which is irreducible in this case.
The $AJ$-packets are, as expected, singletons, and agree with $L$-packets. 

For $\GL(N,\R)$, the problem of parametrizing packets by $A$-parameters (mentioned in Remark \ref{parametergeneral}) has an obvious solution, which we will expand on in Section~\ref{gln} below. 
\end{example}

\begin{example}   
  Let 
  $G = \GL(n,\qH)$, where $\qH$ denotes Hamilton's quaternions.
This is an inner form of $\GL(2n,\R)$ so  ${}^L\sG=W_\R \times \GL(2n,\C)$.     
The number of self-associate standard Levi subgroups in $\wG=\GL(2n,\C)$  is $2^n$, corresponding to the set of  self-dual ordered partitions of $2n$.  As in the case of $\GL(n,\R)$,  there is a unique $\theta$-stable Borel subalgebra up to $K$-conjugacy.  This ensures that the fibres of $\Sigma$, and hence the $AJ$-packets, are singletons.  

The $\theta$-stable standard Levi subgroups are represented by the subgroups \[\sL(\R) = \GL_{n_1}(\C) \times \cdots \times \GL_{n_k}(\C) \times \GL_m(\qH),\]
  with $n_1+\cdots+ n_k+m=n$, where we allow $m$ to be zero.  
  Let \[\Si^1 \subset \underbrace{\C^\times \times \cdots \times \C^\times}_{(n-m)- {\rm times}}  \subset \underbrace{\qH^\times \times \cdots \times \qH^\times}_{(n-m)- {\rm times}}
  \times \underbrace{\qH^\times \times \cdots \times \qH^\times}_{m- {\rm times}},\]
  where $\Si^1 \subset \underbrace{\C^\times \times \cdots \times \C^\times}_{(n-m) = (n_1+n_2+\cdots+n_k) -{\rm times}} ,$ via the embedding
  \[z\rightarrow (  \underbrace{z \times \cdots \times z}_{n_1- {\rm times}}
  \times  \underbrace{z^3 \times \cdots \times z^3}_{n_2- {\rm times}} \times  \underbrace{z^5 \times \cdots \times z^5}_{n_3- {\rm times}},\cdots)\] 
  and $\underbrace{\C^\times \times \cdots \times \C^\times}_{(n-m)- {\rm times}}  \subset \underbrace{\qH^\times \times \cdots \times \qH^\times}_{(n-m)- {\rm times}}
  \times \underbrace{\qH^\times \times \cdots \times \qH^\times}_{m-{\rm times}},$ via the standard embedding of $\C^\times$ in $\qH^\times$ (and trivial in the last $m$-copies of $\qH^\times$).  It is easy to see that for the embedding of $\Si^1 \subset \GL_n(\qH)$ so constructed, the centralizer of $\Si^1$ is the subgroup $\sL(\R) = \GL_{n_1}(\C) \times \cdots \times \GL_{n_k}(\C) \times \GL_m(\qH),$ and the subgroup $\sP$ of $\sG(\C)$ on which   $\Si^1$ operates via $z^d, d \geq 0$ has the properties
  (1) and (2) desired above.

 This enumeration of $\theta$-stable standard Levi subgroups shows that they fall into two types, those with real points of the form 
$\GL(n_1,\C) \times \cdots \times \GL(n_k,\C)$
where $\sum n_i = n$ and those with real points of the form 
$\GL(n_1,\C) \times \cdots \times \GL(n_k,\C) \times \GL(m,\qH)$
where $\sum n_i + m=n$ and $m>0$.   There are $2^{n-1}$ of each type.
However, when one considers the representations $A_\q=\mathcal{R}^S_\q(\C)$ of  $G = \GL(n,\qH)$,  there are coincidences.  In \cite[Section 3]{SW} it is checked that the module of the first kind for the partition $(n_1,\dots,n_{k-1},1)$, i.e. with $n_k=1$ and that of the second kind with partition $(n_1, \cdots, n_{k-1},1)$ with $m=1$ are isomorphic, and these are the only coincidences.  (The simplest example of this is when $n=2$, when the Levi $\C^\times \times \C^\times$ and the Levi $\C^\times \times \mathbb{H}^\times$ give the same representation, essentially because the two Levis differ by a compact-mod-centre factor.)  Thus there are $2^{n-1}+2^{n-2}$ isomorphism classes of representations and $2^n$ nonempty singleton $AJ$-packets, and necessarily there must be coincidences among 
$AJ$-packets.

In particular, one sees that there is an $AJ$-packet which consists of a single tempered representation but which is not the tempered cohomological packet.  This is the packet given by the Levi $\C^\times \times \cdots \times \C^\times \times \mathbb{H}^\times$, which consists of the single fundamental series representation,  which is also the packet given by $\C^\times \times \cdots \times \C^\times$, i.e. by the tempered parameter. 
(Once again, this sort of degeneracy was already seen for compact groups, where all nonempty packets coincide.)

\end{example}

\begin{example} (Unitary groups)  Let $G=\U(p,q)$ with $p+q=N$.  
The $L$-group is $\wG=\GL(N,\C)$ and since $G$ contains a compact maximal torus we know that all parabolics in $\wG$ give cohomological $A$-parameters and $AJ$-packets. 
Let $T=\U(1)^N$ be the diagonal torus.   It is a simple matter to enumerate the $\theta$-stable parabolic subalgebras $\q$ containing $\t$ using the fact that they are $\q=\q(\lambda_0)$ for $\lambda_0 \in i\t_0^*$ (for the notation as in the proof of Proposition~\ref{Sigmaprop}).
We will not go into detailed computations, but simply note the final result:  The $K$-conjugacy classes of $\theta$-stable parabolic subalgebras correspond bijectively to pairs of ordered expressions in nonnegative integers 
\begin{equation}
\begin{array}{l}
p = p_1 +\cdots + p_r
\\
q= q_1+ \cdots + q_r
\end{array}
\quad \text{with $p_i + q_i \neq 0$ for all $1\leq i \leq r$}. 
\label{Upqpartitions} 
\end{equation}
By an abuse of terminology (since zero entries are allowed), we will refer to these as pairs of partitions. 
The Levi $L$ corresponding to the pair $((p_i),(q_i))$ is $\prod_{i=1}^r \U(p_i, q_i)$ and the map $\Sigma$ 
sends the corresponding $K$-conjugacy class of $\theta$-stable parabolics to the Levi $\prod_{i=1}^r \GL(p_i+q_i, \C)$ in $\wG=\GL(N,\C)$, equivalently, to the ordered partition $(p_1+q_1)+ \cdots +(p_r+q_r)=N$.

In general, a pair of partitions as in (\ref{Upqpartitions}) gives the same cohomological representation as a pair with the property that neither partition has two adjacent zeroes.   
For example, 
$\begin{array}{l}2 = 1+1+0 \\1= 0+0+1\end{array}$ and 
$\begin{array}{l}2 = 2+0 \\1= 0+1\end{array}$ give the same representation of $\U(2,1)$.   (This follows from the fact that if $\q=\l+\u$ and $\q'=\l'+\u'$ are the $\theta$-stable parabolics corresponding to two such pairs of partitions, then $\q \subset \q'$ and $\u$ and $\u'$ contain the same imaginary noncompact roots, i.e. $\u\cap \p = \u'\cap \p$ and hence $A_\q\cong A_{\q'}$.)  
    This reduces the number of pair of partitions to consider if one is only interested in enumerating isomorphism classes of cohomological representations.  However,
    it is essential to consider all expressions if one wants to parametrize the $AJ$-packets: 
The $AJ$-packet corresponding to an ordered partition $n_1+\cdots+n_k = N$ is in bijection with all pairs of partitions as in (\ref{Upqpartitions}) with the associated partition of $N$ being $(n_1,\dots,n_k)$ for $n_i=p_i+q_i$.

The discrete series representations of $G=\U(p,q)$ are exhausted by the pairs of partitions in which all $p_i$ and $q_i$ belong to $\{0,1\}$ and $p_i+q_i=1$ for all $i$.  They are therefore $\binom{N}{p}=\binom{N}{q}$ in number.  More generally (by the preceding observation), a representation given by a pair of partitions is discrete series if and only if the partition satisfies $\sum_ip_iq_i=0$.

\end{example}

\begin{example} \label{Un1example}
Let $G=\U(n,1)$.  The cohomological representations of $\U(n,1)$ are particularly simple in terms of the partitions (\ref{Upqpartitions}) above, since at most one $q_i$ can be nonzero, and then it is equal to one.  
Using the observation in the previous example, we see that to get all isomorphism classes of cohomological representations it suffices to consider pairs of the form 
\begin{equation}
\begin{array}{l}
n = a+b+c
\\
1= 0+1+0
\end{array}
\label{Un1partitions} 
\end{equation}
where we allow $a=0$ (respectively, $c=0$), in which case this is understood as 
$\begin{array}{l} n = b+c\\1= 1+0 \end{array}$ 
(respectively, as $\begin{array}{l} n = a+b\\1= 0+1 \end{array}$). 
The pairs with $b=0$ are discrete series representations and there  are $n+1$ of them, forming a single packet.  The remaining representations are $n(n+1)/2$ in number and  pairwise inequivalent. 

This fits well with the classification of cohomological representations in \cite[IV.4]{BW}:  For each pair $(i,j)$ of nonnegative integers there is a unique cohomological representation $J_{ij}$ for which the nonvanishing cohomology groups are 
$$
H^{i+l,j+l}(\g,K,J_{ij}) = \C 
\quad \text{ for } 0\leq l \leq n-(i+j), 
$$
where the bigrading is the Hodge bigrading coming from the complex structure on $\g/\k$.  
The discrete series are $J_{ij}$ with $i+j=n$, while the trivial representation is $J_{00}$.  (The notation in \cite{BW} does not allow for $i+j=n$ and the discrete series are denoted $D_k$, but this notation is more natural.) 
The relation between the two classifications is given by 
$$
\begin{array}{l}
n = a+b+c
\\
1= 0+1+0
\end{array}\longleftrightarrow J_{ca}.
$$ 
The description of the $J_{ij}$ as Langlands quotients shows that each $J_{ij}$ for $i+j <n$ is a singleton $L$-packet.   

In this case there is a simple way to think about $AJ$-packets using the bigrading of cohomology and  the Hodge diamond, by thinking of each representation as being a point in the Hodge diamond, with $J_{ij}$ corresponding to $(i,j)$.  Thus the discrete series correspond to the middle row of vertices $(n,0), (n-1,1), \dots, (0,n)$ and the trivial representation $J_{00}$ corresponds to the bottom vertex. 
Then the $AJ$-packet corresponding to an ordered  partition $n_1 + \cdots + n_k = n+1$ consists of those $J_{ab}$ for which, when one looks at the Hodge diamond of $J_{ab}$ with lower vertex at $(a,b)$, their intersection with the middle row $\{(n,0),\cdots, (0,n)\}$ (corresponding to the discrete series) gives a decomposition of that row into non-intersecting subsets which are intervals of length $n_1, n_2, \dots, n_k$.

For example, if $n=5$ we have the following picture for the partition $3+1+2=6$, where we have only drawn the lower half of the Hodge diamonds for simplicity: 
$$
\begin{tikzpicture}[scale=1.5]
    \foreach \x in {-2.5,...,2.5}
    \foreach \y in {-.5,...,1.5}
    {
  \fill[black] (\x,\y) circle (0.04cm);
    }       
    \foreach \x in {-2,...,2}
    \foreach \y in {-1,...,1}
    {
  \fill[black] (\x,\y) circle (0.04cm);
    }      
\draw[white,fill=white] (.5,-1.05)--(3.05,-1.05)--(3.05,1.55);
\draw[white,fill=white] (-.5,-1.05)--(-3.05,-1.05)--(-3.05,1.55);
     \draw[dashed,black] (-1.5,.25) -- (-2.8,1.6);
     \draw[dashed,black!50] (-1.5,.25) -- (-.2,1.6);
     \draw[dashed,black] (.5,1.25) -- (.2,1.6);
     \draw[dashed,black] (.5,1.25) -- (.8,1.6);
     \draw[dashed,black] (2,.75) -- (1.2,1.6);
     \draw[dashed,black] (2,.75) -- (2.8,1.6);     
	\node[scale=.75] at (-2.5, 1.75) {$(5,0)$};	
	\node[scale=.75] at (-1.5, 1.75) {$(4,1)$};	
	\node[scale=.75] at (-.5, 1.75) {$(3,2)$};	
	\node[scale=.75] at (.5, 1.75) {$(2,3)$};	
	\node[scale=.75] at (1.5, 1.75) {$(1,4)$};	
	\node[scale=.75] at (2.5, 1.75) {$(0,5)$};	
	\node[scale=.75] at (-1.5, .75) {$(3,0)$};	
	\node[scale=.75] at (2.0, 1.25) {$(0,4)$};	
	\node[scale=.75] at (0,-.75) {$(0,0)$};	
\draw[black,fill=black] (-1.5,0.5) circle (0.06cm); 
\draw[black,fill=black] (0.5,1.5) circle (0.06cm); 
\draw[black,fill=black] (2,1) circle (0.06cm); 
\end{tikzpicture}
$$
The $AJ$-packet is $\{J_{30}, J_{23}, J_{04}\}$, of which one ($J_{23}$) is a discrete series representation.

We will refrain from writing out a rule for this as the picture involving the Hodge diamond is quite appealing.  The justification for this picture of $AJ$-packets will be given in Section~\ref{coho} below, see especially Theorem~\ref{mirror}.   The following fact is a consequence:  The $A$-packet  corresponding to an ordered partition of $n+1$ contains a discrete series representation if and only if the partition contains a $1$ (see Remark~\ref{Un1mirror}). 

\end{example}

\begin{example}
We give an example of an $AJ$-packet for the non-quasisplit group $\U(3,1)$ which contains two $L$-packets.   Of course, the discussion in the previous example shows clearly that an $AJ$-packet can contain more than one $L$-packet, but in this example we see that there is no preferred choice. 
In $\U(3,1)$, take the packet corresponding to the partition $4=2+2$ (i.e. the standard parabolic in $\wG$ with Levi $\GL(2,\C) \times \GL(2,\C)$).   There are two representations in the $AJ$-packet, which correspond to the pairs of partitions  
$$
{\begin{array}{c} 3=2+1 \\ 1=0+1 \end{array}}
\quad \text{and} \quad \small{\begin{array}{c} 3=1+2 \\ 1=1+0 \end{array}}, 
$$ 
each of which constitutes a singleton $L$-packet.   There is no preferred choice of $L$-packet in the $AJ$-packet.  

\end{example}

\section{Tempered cohomological representations} \label{cohom-para}
Although most cohomological representations are nontempered, there are always tempered cohomological
representations for any reductive group $\sG(\R)$.  Their parameters particularly  simple to describe as the
following theorem asserts which is an immediate consequence of Theorem \ref{cohomo} on noting that tempered 
cohomological representations have tempered Langlands parameter with the infinitesimal
character of a finite dimensional representation of $\sG(\C)$, therefore they have unique
Langlands parameter (up to     twisting by $H^1(\R,Z(\wG))$), by Theorem \ref{cohomo} (in which to have tempered $A$-parameter, the only option is to have
$\wL=\wT$).
The existence of $AJ$-packets of tempered representations is addressed in (3) of Theorem~\ref{AJ1}.
This can also be seen directly using the fact that the fundamental series representations are induced
from the discrete series packet on the fundamental Levi (the induction is irreducible) and
generalities about the cohomology of induced representations (\cite[III.3]{BW}).

\begin{thm} \label{tempered}
 Let $\mathsf{G}$ be a  connected reductive algebraic group defined over $\R$. 
 Then for any fixed coefficient system $V\cong V^\theta$, a finite dimensional irreducible representation of
 $\sG(\C)$, there is a cohomological tempered $L$-parameter $\sigma$ for $\mathsf{G}(\R)$, and
 up to     twisting by $H^1(\R,Z(\wG))$,
 there is at most one
 cohomological tempered $L$-parameter $\sigma$ for $\mathsf{G}(\R)$.
The cohomological tempered $L$-parameter $\sigma$ for the trivial coefficient system is
  given by the following commutative diagram:
$$
\xymatrix{
W_\R \ar[r]^-{\sigma} 
\ar[rd]^-{\sigma_1} & {}^L\sG\\
& \SL_2(\C)\times W_\R. \ar[u]_-{\Lambda}
}
$$
  Here $\Lambda: W_\R \times \SL_2(\C) \rightarrow {}^L \sG$ 
 restricted to $\SL_2(\C)$ is the principal 
 $\SL_2(\C) $ inside $\wG$
 (associated with the pinning on $\wG$, hence commuting with the action of $W_\R$ on $\wG$), 
and  the parameter $\sigma_1: W_\R \rightarrow \SL_2(\C) \times W_\R$ corresponds to the lowest discrete
series representation $D_2$ of $\PGL_2(\R)$, and is the 2 dimensional irreducible representation of
$W_\R$ given by ${\rm Ind}_{\C^\times}^{ W_\R} (z/\bar{z})^{1/2}$. 
\end{thm}

The same proof works to prove Theorem \ref{coh-para} once we have noted that $\Lambda$ and $T(\Lambda)$ have the same
infinitesimal character.

\section{Complex groups} \label{Cgroup}

 Complex groups $\sG(\C)$ where $\sG$ is a reductive group over $\C$ can be treated as a real group via restriction of scalars. In this section
 we discuss how the classification of cohomological representations of real reductive groups of earlier sections translates for $\sG(\C)$.
 One of the issues that we discuss here is that unlike real groups where there is a requirement of parabolics being ``self-associate'',  there
 is no such requirement for complex groups. (Of course, one could also say that $W_\C$ does not have the `symmetry' forced by $j \in W_\R$, but
 we want to argue via real groups!) Cohomological representations of complex groups are rather well understood due to the work
 of Enright [En] which we recall at the end of the section, making it more precise.

 For the formalism of $L$-groups, it is better to begin with a real reductive group $\sG$ (defined over $\R$),  whose complex points
 give rise to a fixed reductive group over $\C$. There are many choices of real reductive groups which give rise to the same
 group over $\C$. For example we could take a split group over $\R$, or we could also take a compact group over $\R$.

 Given an $L$-group ${}^L \sG = \wG \rtimes W_\R$, there is the notion of restriction of scalars of this $L$-group, to be denoted
 as $R_{\C/\R}({}^L \sG) = (\wG \times \wG) \rtimes W_\R$ where the action of $W_\R$ on $\wG \times \wG$ is via
 $\Gal(\C/\R) = \langle \sigma \rangle $
 which operates as
 $$\sigma (g_1,g_2) = (\sigma g_2, \sigma g_1 ), \hspace{1cm} (g_1,g_2) \in \wG \times \wG ,$$
 where $\sigma g_1, \sigma g_2$ denotes the action of $W_\R$ via 
 $\Gal(\C/\R) = \langle \sigma \rangle $ on $\wG$.

 Let's prove the following well-known lemma, a form of  Shapiro's lemma in this context.

 \begin{lemma} \label{shapiro}
   The set of admissible homomorphisms from $W_\R$ to $R_{\C/\R}({}^L \sG) = (\wG \times \wG) \rtimes W_\R$ up to equivalence
   is in a natural bijective correspondence with
   the set of admissible homomorphisms from $W_\C=\C^\times$ to $ \wG$ up to equivalence; the
   natural bijection being just the restriction from $W_\R$ to $W_\C$, and projecting from $\wG \times \wG $ to $\wG$ by the first factor. 
 \end{lemma}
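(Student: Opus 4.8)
The plan is to prove this as a standard instance of Shapiro's lemma for the Weil group $W_\R$ acting on $\wG \times \wG$ by the twist coming from $\Gal(\C/\R)$. The map in one direction is the obvious one: given an admissible homomorphism $\Phi \colon W_\R \to (\wG \times \wG) \rtimes W_\R$ covering the identity on $W_\R$, I restrict to $W_\C = \C^\times \subset W_\R$, on which $\Phi$ takes values in $\wG \times \wG$, and then compose with the first projection $\mathrm{pr}_1 \colon \wG \times \wG \to \wG$ to get an admissible homomorphism $\varphi \colon \C^\times \to \wG$. Admissibility (continuity, semisimplicity of the image in the Zariski sense) is visibly preserved by restriction and projection, so this is well-defined, and it clearly respects the equivalence relation given by conjugation by $\wG \times \wG$ (an inner automorphism of the target conjugating by $(g_1,g_2)$ induces conjugation by $g_1$ on the projection).

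Next I would construct the inverse. Write $\Phi(w) = (\Phi_1(w), \Phi_2(w)) \cdot w$ for $w \in W_\R$, where for $w \in W_\C$ the $w$-component is trivial; the multiplicativity of $\Phi$ together with the twisted action $\sigma(g_1,g_2) = (\sigma g_2, \sigma g_1)$ forces, upon choosing the element $j$ with $j^2 = -1 \in \C^\times$, the relation that $\Phi_2$ on $W_\C$ is determined by $\Phi_1$ composed with the Galois twist and conjugated by $\Phi(j)$; concretely $\Phi_2(z) = \Phi(j)\,\sigma\!\bigl(\Phi_1(\bar z)\bigr)\,\Phi(j)^{-1}$ up to the bookkeeping of which factor $\sigma$ swaps. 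Thus given any admissible $\varphi \colon \C^\times \to \wG$ I set $\Phi_1|_{\C^\times} = \varphi$, define $\Phi_2$ by the forced formula, and then I must exhibit an allowable value for $\Phi(j)$: I would take $\Phi(j)$ to swap the two factors appropriately, e.g. of the form $(\varphi(j^2)\cdot\text{correction}, 1)\cdot j$-type element, checking that $\Phi(j)^2 = \Phi(j^2) = (\varphi(-1), \Phi_2(-1))$ — the one genuine computation — using $j^2 = -1$. One verifies this assignment is a homomorphism (the relations $jzj^{-1} = \bar z$ in $W_\R$ go over to the twisted action on $\wG\times\wG$ by construction) and admissible, and that it is independent, up to $(\wG\times\wG)$-conjugacy, of the auxiliary choices.

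Finally I would check the two constructions are mutually inverse up to equivalence: starting from $\Phi$, restricting-projecting and re-inflating recovers $\Phi$ up to conjugation by an element of the form $(1,g_2) \in \wG\times\wG$ (which is exactly the freedom in choosing $\Phi(j)$); and starting from $\varphi$, the round trip is literally the identity. This gives the claimed bijection on equivalence classes. The only step requiring real care — and the one I would call the main obstacle, though it is quite mild — is the normalization of $\Phi(j)$: one must check that the swap-element one writes down squares to the correct central-ish value $(\varphi(-1),\ \Phi(j)\sigma(\varphi(-1))\Phi(j)^{-1})$ dictated by $j^2=-1$, so that the inflation is genuinely a group homomorphism; everything else (continuity, semisimplicity, compatibility with equivalence, functoriality of the bijection) is routine and formal. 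I would remark that this lemma, combined with Lemma~\ref{inf}, also immediately matches infinitesimal characters on the two sides, which is how it will be used for cohomological parameters of $\sG(\C)$.
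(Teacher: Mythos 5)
Your proposal is correct and follows essentially the same route as the paper: the paper also defines the inverse explicitly by $\Phi(z)=(\phi(z),\sigma\phi(\bar z))$ on $\C^\times$ and $\Phi(j)=(\phi(-1),1)\cdot j$, which is exactly your swap-element with trivial ``correction'', and leaves the homomorphism and round-trip checks as routine. Your extra verification that any $\Phi$ is recovered up to conjugation by an element of the form $(1,g_2)$ is a correct elaboration of what the paper compresses into ``one checks easily''.
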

 \begin{proof}It suffices to give an `inverse' to the restriction map, i.e., given a homomorphism 
$\phi: \C^\times \rightarrow \wG$, to construct an admissible homomorphism $\Phi: W_\R \rightarrow R_{\C/\R}({}^L \sG) = (\wG \times \wG) \rtimes W_\R$
   whose restriction to $\C^\times$ (and projected to the first factor) is the map $\phi$ we started with.

   The map $\Phi$ is defined as 
   \begin{eqnarray*}
   \Phi:  W_\R & \lrta &  (\wG \times \wG) \rtimes W_\R \\
   z \in \C^\times & \lrta & (\phi(z), \sigma \phi(\bar{z})) \cdot z \\ 
   j & \lrta  & (\phi(-1), 1) \cdot j,
   \end{eqnarray*}
and one checks easily that we have an admissible homomorphism
$ \Phi:  W_\R  \rightarrow   (\wG \times \wG) \rtimes W_\R$ that one wanted.
\end{proof}   

 It follows from  Lemma \ref{shapiro} that   the set of admissible homomorphisms (up to equivalence)
 from $W_\R$ to $R_{\C/\R}({}^L \sG) = (\wG \times \wG) \rtimes W_\R$
 is independent of the real group $\sG(\R)$ one begins with to define $\sG(\C)$. We also note the following
 well-known corollary on the infinitesimal character of representations of complex groups.

\begin{cor}
  If $\sG$ is a split reductive group over $\R$, and $\pi$ an irreducible representation of $\sG(\C)$ with
  Langlands parameter $\sigma: \C^\times \rightarrow \wG$ given by:
  \[\sigma(z) = z^{\lambda} \bar{z}^\mu
          {\rm ~~ where~~} \lambda , \mu \in X_\star(\wT)\otimes \C  {\rm ~~with~~} \lambda- \mu \in X_\star(\wT), \]
  then the infinitesimal
 character of the representation $\pi$ is the pair $\lambda , \mu \in X_\star(\wT)\otimes \C$ up to $W \times W$-conjugacy, where $W$ is
 the Weyl group  of $\sG(\R)$.   \end{cor}
 \begin{prop}
   The set of conjugacy classes of self-associate parabolics in  $R_{\C/\R}({}^L \sG) = (\wG \times \wG) \rtimes W_\R$
   is in bijective correspondence with the set of all parabolics in $\wG$. 
   \end{prop}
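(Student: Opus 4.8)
The plan is to reduce the statement to a combinatorial bijection on subsets of the simple roots of $\wG$. Fix a pinning $(\wB,\wT)$ of $\wG$, let $\Delta$ be the corresponding set of simple roots, and let $\theta$ be the pinned automorphism of $\wG$ through which $W_\R$ (via $\Gal(\C/\R)$) acts in ${}^L\sG$, so that in $R_{\C/\R}({}^L\sG)=(\wG\times\wG)\rtimes W_\R$ the element $j$ acts by $(g_1,g_2)\mapsto(\theta g_2,\theta g_1)$ and $\wB\times\wB\supset\wT\times\wT$ is the distinguished Borel and maximal torus. Every standard parabolic of $\wG\times\wG$ is of the form $\wP_{S_1}\times\wP_{S_2}$ with $S_1,S_2\subseteq\Delta$ uniquely determined, with opposite $\wP_{S_1}^-\times\wP_{S_2}^-$. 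As elsewhere in the paper, ``conjugacy classes of self-associate parabolics'' is to be read as the set of self-associate standard parabolics (a self-associate parabolic being standard by definition, and distinct standard parabolics of the connected group $\wG\times\wG$ being non-conjugate), while ``the set of all parabolics in $\wG$'' means the set of conjugacy classes of parabolics of $\wG$, i.e.\ the set of subsets $S_1\subseteq\Delta$. Thus it suffices to show that self-associateness forces $S_2$ to be a fixed involution of $S_1$, with $S_1$ otherwise arbitrary.

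First I would carry out the conjugation computation: for $x=(g_1,g_2)\cdot j$ one has $x(p_1,p_2)x^{-1}=(g_1(\theta p_2)g_1^{-1},\,g_2(\theta p_1)g_2^{-1})$, hence $x(\wP_{S_1}\times\wP_{S_2})x^{-1}=\bigl(g_1\wP_{\theta(S_2)}g_1^{-1}\bigr)\times\bigl(g_2\wP_{\theta(S_1)}g_2^{-1}\bigr)$, using that $\theta$ is pinned and so $\theta\wP_S=\wP_{\theta(S)}$. Recall (cf.\ the discussion following Definition \ref{sap} and Remark \ref{inverse}) that for the opposition involution $\iota=-w_0$ acting on $\Delta$, the parabolic $\wP_S^-$ is $\wG$-conjugate to the standard parabolic $\wP_{\iota(S)}$; in particular $\wP_S^-$ and $\wP_{S'}$ are $\wG$-conjugate if and only if $S'=\iota(S)$. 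Comparing conjugacy types in the two factors, the existence of an $x\in(\wG\times\wG)\cdot j$ with $x(\wP_{S_1}\times\wP_{S_2})x^{-1}=\wP_{S_1}^-\times\wP_{S_2}^-$ is equivalent to the pair of equalities $\theta(S_2)=\iota(S_1)$ and $\theta(S_1)=\iota(S_2)$ (and when these hold one may take $g_1=g_2$ a representative of $w_0$ in $\wG$, or the distinguished element of Proposition \ref{J-M}).

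Next I would invoke the one structural input: the opposition involution commutes with every pinned automorphism of $\wG$, i.e.\ $\iota\theta=\theta\iota$. This holds because on each irreducible factor $\iota$ is either trivial (types $A_1$, $B_n$, $C_n$, $D_n$ with $n$ even, $E_7$, $E_8$, $F_4$, $G_2$) or the unique nontrivial diagram automorphism (types $A_n$ with $n\ge2$, $D_n$ with $n$ odd, $E_6$), hence central in the automorphism group of the Dynkin diagram, while $\iota=-w_0$ permutes isomorphic factors trivially. Granting $\iota\theta=\theta\iota$, the first equality $\theta(S_2)=\iota(S_1)$ is equivalent to $S_2=\theta\iota(S_1)$, and then the second equality reads $\theta(S_1)=\iota\theta\iota(S_1)=\theta(S_1)$ and holds automatically; moreover $(\theta\iota)^2=\theta^2\iota^2=\mathrm{id}$, so $S_1\mapsto\theta\iota(S_1)$ is an involution of the power set of $\Delta$ and $S_1$ may be chosen arbitrarily. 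Therefore $\wP_{S_1}\times\wP_{S_2}\mapsto\wP_{S_1}$ sets up the asserted bijection between self-associate standard parabolics of $R_{\C/\R}({}^L\sG)$ and standard parabolics of $\wG$, i.e.\ conjugacy classes of parabolics of $\wG$.

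The one place that requires care — and the only genuine content beyond unwinding definitions — is the commutation $\iota\theta=\theta\iota$ together with the correct bookkeeping for the opposite of a standard parabolic (its conjugacy type being $\iota(S)$ rather than $S$); the conjugation computation and the reduction to subsets of $\Delta$ are routine. I would also make explicit that the notion of self-associate used is that of Definition \ref{sap} (conjugacy by \emph{some} element of the coset $(\wG\times\wG)\cdot j$), so that the left-hand side genuinely is the set of those standard parabolics $\wP_{S_1}\times\wP_{\theta\iota(S_1)}$, and note that the absence of any ``self-associate'' constraint on $S_1$ is exactly the phenomenon flagged at the start of the section: the swap built into the $j$-action of $R_{\C/\R}$ absorbs the diagram symmetry that otherwise cuts down the standard parabolics of ${}^L\sG$.
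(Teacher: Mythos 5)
Your proof is correct, and its skeleton — conjugating by $(g_1,g_2)\cdot j$, seeing that the swap in the $j$-action forces the second factor to be determined by the first while leaving the first free — is the same as the paper's. The difference is in how the two matching conditions are handled. The paper works coordinate-free with conjugacy classes of arbitrary parabolics $P_1\times P_2$: the condition is that $\sigma P_2$ be conjugate to $P_1^-$, and the companion condition ($\sigma P_1$ conjugate to $P_2^-$) follows formally by applying $\sigma$ and taking opposites, so no root-theoretic input is needed and the self-associate parabolics are exactly the conjugates of $P_1\times(\sigma P_1)^-$. You instead normalize to standard parabolics, translate self-associateness into $\theta(S_2)=\iota(S_1)$ and $\theta(S_1)=\iota(S_2)$ on subsets of simple roots (using, correctly, that $\wP_S^-$ is conjugate to $\wP_{\iota(S)}$), and then need the extra lemma that the opposition involution commutes with the pinned Galois automorphism, which you prove correctly (it also follows at once from the fact that $\iota=-w_0$ is canonical, hence fixed by every diagram automorphism, including ones permuting factors). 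What your route buys is the explicit parametrization $S_2=\theta\iota(S_1)$ of the self-associate standard parabolics and the observation that the consistency of the two conditions is precisely the commutation $\iota\theta=\theta\iota$; what the paper's route buys is brevity, since that commutation is never invoked. Your reading of both sides of the bijection (standard, i.e.\ conjugacy classes, on each side) is also the intended one.
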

 \begin{proof} A parabolic in $\wG \times \wG$ is of the form $P= P_1 \times P_2$. For this parabolic, we have $jPj = (\sigma P_2,\sigma P_1)$,
   and therefore if $P$ has to be conjugate to its opposite by an element in $ (\wG \times \wG) \cdot j$, then $ \sigma P_2$
   must be a conjugate of $P_1^{-}$ and  $ \sigma P_1$
   a conjugate of $P_2^{-}$; the second condition is clearly a consequence of the first, and thus any parabolic $P$
   in $ \wG \times \wG$  which is conjugate to its opposite, can be written  as $P= P_1 \times (\sigma P_1)^{-}$, and
   conversely, such parabolics are self-associate, proving the proposition. \end{proof}

 Recall that
 for a complex group $\sG(\C)$, the irreducible coefficient systems $V$ for which there are unitary
 representations with cohomology,
 are of the form $V= V_\lambda \otimes \bar{V}_\lambda^\vee$ where $V_\lambda$ is an algebraic highest weight module for $\sG(\C)$
 with highest weight $\lambda$
 and $\bar{V}_\lambda$ denotes its complex conjugate representation. (This is the class of irreducible
 representations of $\sG(\C)$ invariant under a  Cartan involution $\theta$ on $\sG(\C)$. To see
 this,  observe that if  $V_\lambda$ is an algebraic highest weight module for $\sG(\C)$ with highest weight $\lambda$, then we can assume that the Cartan involution on $\sG(\C)$ is the restriction 
 of the Cartan involution  $g\rightarrow {}^t\bar{g}^{-1}$ on $ \GL(V_\lambda)$.)

 We note that in $V_\lambda$, there is a highest weight vector $v_\lambda$ on which a fixed Borel subgroup $\sB(\C)$
 containing a fixed maximal torus $\sT(\C)$  acts by the character $\lambda: \sB(\C) \rightarrow \C^\times$. However, the stabilizer of the line $\langle v_\lambda \rangle$ inside the projective space $\PP(V_\lambda)$ could be bigger than $\sB$; call the stabilizer $P_\lambda \supset \sB$.   
 The parabolic group $P_\lambda$ comes equipped with a character $P_\lambda \rightarrow \C^\times$ (by which  $P_\lambda$  operates on
 $v_\lambda$). By abuse of notation, we call this character on $P_\lambda$ also to be $\lambda$ (as $\lambda$ restricted
 to $\sT(\C)$ determines the character on $P_\lambda$). If $P$ is a parabolic contained in $P_\lambda$, then $\lambda$ restricts
 to give a character on $P(\C)$ too.
  Thus the character
 $\lambda: \sT(\C) \rightarrow \C^\times$ defines a character on all $P \subset P_\lambda$, which we will
 denote by $\lambda_P: P(\C) \rightarrow \C^\times$.

 Here is the classification of cohomological representations  for complex groups. Unlike
 the results for other real groups which are in terms of $A$-parameters,  this theorem is in terms
 of  irreducible admissible representations of the group as the $AJ$-packets are singletons (although general
 $A$-packets for groups other than $\GL_n(\C)$ need not be).
 This theorem is proved in \cite{En}, see also Theorem 7.2 in \cite{Sc}, although a knowledgeable reader may find
 our formulation different from those in these sources.

\begin{thm} \label{En}
 Let $\sG(\C)$ be any connected reductive group over $\C$, and $V$ a fixed coefficient system
  (a finite dimensional irreducible representation of $\sG(\C)$) of the form
  $V= V_\lambda \otimes \bar{V}^\vee_\lambda$ where $V_\lambda$ is an algebraic highest weight module
  with highest $\lambda$. 
  Then there is a bijective map between irreducible unitary cohomological representations of $\mathsf{G}(\C)$
  with coefficients in $V$ and the  set of standard
    parabolics in $\sG(\C)$ contained in  $P_\lambda$. For a parabolic $P \subset P_\lambda$, with the modulus
    function $\delta_P: P(\C) \rightarrow \R^+$, the associated unitary cohomological representation is
    $${\rm Ind}^{\sG(\C)}_{P(\C)}[ D(\delta_P) \cdot \lambda_P(z/\bar{z})],$$
    where $D(\delta_P)$ is the unitary character on $P$ constructed below, and $\lambda_P$
    is the character on $P(\C)$ constructed above. In particular, there is a non-tempered
    cohomological representation if and only if $\lambda$ is not regular.
\end{thm}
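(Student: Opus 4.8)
The plan is to exploit Lemma \ref{shapiro} to transport the general classification of cohomological representations of real reductive groups (Theorem \ref{AJ} together with the uniqueness results of Section \ref{cohomological}) to the complex case, where $\sG(\C)$ is viewed as the real points of $\mathsf{H}={\rm R}_{\C/\R}\sG$ with ${}^L\mathsf{H}=(\wG\times\wG)\rtimes W_\R$. First I would note that, via Lemma \ref{shapiro}, an $A$-parameter $W_\R\times\SL_2(\C)\to{}^L\mathsf{H}$ is the same datum as an $A$-parameter $W_\C\times\SL_2(\C)\to\wG$, i.e.\ a homomorphism $\C^\times\times\SL_2(\C)\to\wG$ with $\C^\times$ trivial on $\R^+$. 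By Theorem \ref{AJ} (applied to $\mathsf{H}$) and the computation of $\C^\times\times\SL_2(\C)$-parameters in Proposition \ref{Cparameter}, such a parameter is cohomological exactly when the centralizer $\wL$ of the image of $\C^\times$ is a standard Levi, the image of $\SL_2(\C)$ is a principal $\SL_2(\C)$ in $\wL$, and the integrality/self-associateness conditions hold. The key simplification in the complex case is that the self-associate condition on a parabolic $P_1\times(\sigma P_1)^-$ in $\wG\times\wG$ is \emph{automatic} (this is precisely the Proposition just proved above), so cohomological parameters for the coefficient system $V=V_\lambda\otimes\bar V_\lambda$ are in bijection with parabolics $\wQ\subset\wG$ subject only to the condition from Proposition \ref{Cparameter} that $\langle\lambda,\beta\rangle=1$ for all simple roots of the Levi $\wL$ of $\wQ$ --- equivalently, dualizing, that the parabolic $P$ of $\sG(\C)$ dual to $\wQ$ is contained in $P_\lambda$. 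That gives the asserted bijection with $\{P:\ P_\lambda\supset P\}$.

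Next I would identify the representation attached to such a $P$. Unwinding the Adams–Johnson/Vogan–Zuckerman description via $\theta$-stable parabolics for $\mathsf{H}(\R)=\sG(\C)$: the relevant $\theta$-stable parabolic $\q=\l+\u$ has Levi $\mathsf{L}={\rm R}_{\C/\R}\mathsf{M}$ where $\mathsf{M}$ is the Levi of $P$, and the cohomologically induced module $A_\q(\text{twist by }\lambda)$ can be computed by the standard identification of derived functor modules with Langlands quotients of parabolically induced representations. Because $\sG(\C)$ has all Levi subgroups connected, there is no disconnectedness subtlety, and the induced representation is irreducible for the relevant (regular-enough) infinitesimal data; I would cite the Langlands quotient formalism as in Section \ref{existence}'s references, or directly [En], [Sc]. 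The upshot is that the cohomological representation for $P$ is the (unitarily) induced representation ${\rm Ind}^{\sG(\C)}_{P(\C)}\bigl[D(\delta_P)\cdot\lambda_P(z/\bar z)\bigr]$, where the factor $\lambda_P(z/\bar z)$ comes from the $\C^\times$-part of the parameter (the cocharacter $\chi_\lambda$ of Proposition \ref{Cparameter}, equation (1), restricted to the complex torus, which on the $\wL$-central directions is governed by $\lambda+\check\rho_{\wG}-\check\rho_{\wL}$), and $D(\delta_P)$ is the unitarization factor built from the modulus character $\delta_P$ --- this is exactly the "minor observation" of Section \ref{duality} turning the square root of $\delta_P$ (a positive-real-valued character) into a unitary character of $P(\C)$. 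I would check that this recipe matches the $\SL_2(\C)$-part being principal in $\wL$: the principal $\SL_2(\C)$ restricted to its torus is $2\check\rho_{\wL}$, which under induction produces precisely the $\delta_P^{1/2}$-twist.

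Finally, for the last sentence: the representation is tempered if and only if the inducing data on $P(\C)$ is (up to the unitarization) trivial on the split center, which happens exactly when $\lambda_P$ is itself a character that does \emph{not} shift the parameter off the tempered locus --- concretely, by Theorem \ref{coh-para}/\ref{tempered} the parameter is tempered iff $\wL=\wG$, i.e.\ iff the only parabolic $\wQ$ occurring is $\wG$ itself; and $\wQ=\wG$ is forced (is the only option) precisely when $\langle\lambda,\beta\rangle=1$ for \emph{all} simple roots $\beta$ of $\wG$, i.e.\ when $\lambda$ is regular. Conversely, if $\lambda$ is not regular there is some simple $\beta$ with $\langle\lambda,\beta\rangle=0\ne1$, forcing $\wL\supsetneq\wT$ for every admissible $\wQ$, hence no tempered cohomological representation; but a non-maximal proper $\wQ$ (equivalently a proper $P\subsetneq\sG(\C)$ with $P\supset P_\lambda$) does satisfy the condition on the simple roots of its Levi as long as those roots are among the ones with $\langle\lambda,\beta\rangle=1$, and such $P$ exist precisely because $\lambda$ fails to be regular. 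This yields the stated equivalence.

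The main obstacle I anticipate is the second step: matching the abstract Adams–Johnson derived-functor module $A_\q$ to the explicit induced representation ${\rm Ind}^{\sG(\C)}_{P(\C)}[D(\delta_P)\cdot\lambda_P(z/\bar z)]$, including pinning down the correct power of the modulus character and checking irreducibility. This requires either invoking Enright's computation directly or redoing the Langlands-quotient bookkeeping for ${\rm R}_{\C/\R}\sG$, keeping careful track of how $\check\rho_{\wG}-\check\rho_{\wL}$ on the parameter side corresponds to $\delta_P^{1/2}$ on the representation side --- a computation that is routine in principle but where sign and normalization errors are easy to make.
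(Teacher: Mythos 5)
Your overall strategy (view $\sG(\C)$ as ${\rm R}_{\C/\R}\sG$, use Lemma \ref{shapiro} to reduce $W_\R$-parameters to $W_\C$-parameters, invoke Theorem \ref{AJ} plus the observation that self-associateness is automatic after restriction of scalars) is sound and in the spirit of the section, but the execution has a genuine error in the tempered/non-tempered part. Temperedness of an $A$-parameter means the Arthur $\SL_2(\C)$ acts trivially; in the parametrization you use (Arthur $\SL_2$ principal in the Levi $\wL$ centralizing the image of $\C^\times$), this happens exactly when $\wL=\wT$, i.e. when the dual-side parabolic $Q$ is the Borel $\wB$, i.e. $P=B$ — the \emph{minimal} member of the family — not when $\wL=\wG$. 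You have conflated the principal $\SL_2$ of $\wG$ occurring in the tempered companion $T(\Lambda)$ of Theorems \ref{tempered} and \ref{coh-para} (which concerns the composite $W_\R\rightarrow{}^L\sG$, not the Arthur factor) with the Arthur $\SL_2$ of the parameter. Consequently your conclusion ``if $\lambda$ is not regular there is no tempered cohomological representation'' is false and contradicts Theorem \ref{tempered}: the condition on the simple roots of $\wL$ is vacuous for $\wL=\wT$, so $B\subset P_\lambda$ always contributes the tempered member (the full unitary principal series); the correct statement is that a non-tempered member exists iff $P_\lambda\supsetneq B$, iff $\lambda$ is not regular. Two related slips: midway you flip the containment to $P\supset P_\lambda$; and the admissibility condition that actually dualizes to $P\subset P_\lambda$ is that $\lambda$, as a cocharacter of $\wT$, is orthogonal to the simple roots of $\wL$ (equivalently $Q$ contains the parabolic defined by $\lambda$, as in the proof of Theorem \ref{AJ}), so quoting ``$\langle\lambda,\beta\rangle=1$'' from Proposition \ref{Cparameter} and calling it ``equivalent, dualizing'' to $P\subset P_\lambda$ is a non sequitur as written.

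For the comparison: the paper does not prove Theorem \ref{En}; it attributes the result to Enright \cite{En} (see also \cite{Sc}) and only supplies the reformulation — Lemma \ref{shapiro}, the proposition that self-associate parabolics of $(\wG\times\wG)\rtimes W_\R$ are just parabolics of $\wG$, and the constructions of $\lambda_P$ and $D(\delta_P)$. Your derivation of the \emph{parametrization} from Theorem \ref{AJ} is a legitimate alternative to citing \cite{En} for that half (once the admissibility condition and the tempered argument are fixed as above). But the identification of the representation attached to $P$ with ${\rm Ind}^{\sG(\C)}_{P(\C)}[D(\delta_P)\cdot\lambda_P(z/\bar z)]$ — matching the cohomologically induced module $A_\q$ (with $\q$ having Levi the complex Levi of $P$) with this unitarily induced character, including its irreducibility and the exact unitarization $\delta_P\mapsto D(\delta_P)$ — is precisely what the paper outsources to \cite{En}, and you defer it as well; so for the heart of the explicit formula your proposal is a citation rather than a proof, and it would need to be carried out (or cited as the paper does) to be complete.
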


In the rest of the section, we construct the unitary character $D(\delta_P)$ on $P$ from the positive  character $\delta_P$ used above, done in greater generality of any complex reductive group. 

First, for a complex torus $\sT(\C) = X_\star(\sT) \otimes \C^\times$, with character group $X^\star(\sT)$, and co-character group $X_\star(\sT)$,
note that any topological character $\chi$  on $\sT(\C)$ is of the form
$$ t \in \sT(\C) \longrightarrow t^\lambda \cdot \overline{t^{\mu}}, \hspace{.7 cm}
\lambda, \mu \in X^\star(\sT) \otimes \C, \hspace{.7 cm}
( \lambda -\mu) \in X^\star(\sT).$$
A characters  $\chi: \sT(\C) \rightarrow \C^\times$ has the property that
$\chi (\sT(\C)) \subset \R^+$ precisely when  $\lambda = \mu$.
For such characters $\chi$ with  $\lambda = \mu$, define another character $D(\chi)$ by
$$ t \in \sT(\C) \longrightarrow t^\lambda/ \overline{t^{\lambda}}, \hspace{1 cm}  \lambda  \in X^\star(\sT) \otimes \C,$$
where for the definition of $D(\chi)$ to be meaningful, we must have:
$2 \lambda \in X^\star(\sT)$.
Positive characters $\chi$ on $\sT$ for which $D(\chi)$ is meaningful, i.e.,  $2 \lambda \in X^\star(\sT)$, will be called
{\it positive algebraic characters} on $\sT$.

The resulting association gives a bijective correspondence between positive algebraic characters on $\sT(\C)$, and all unitary
characters on $\sT(\C)$, denoted as 
$$\chi\longleftrightarrow D(\chi).$$

Now, let $\sG$ be a general complex reductive group with maximal toral quotient $\sT$. Any topological homomorphism $\sG(\C) \rightarrow \C^\times$ factors through $\sT(\C)$ for which we can do the previous construction to get a
duality among certain  positive algebraic characters on $\sG(\C)$ and all unitary characters on $\sG(\C)$, which
also will be denoted by
$$\chi\longleftrightarrow D(\chi).$$ 

\section{Cohomological parameters for $\GL_n(\R)$, $\GL_n(\C)$, $\U_{p,q}(\R)$} \label{gln}

This section merely summarizes the  cohomological parameters for the groups $\GL_n(\R)$,  
$\GL_n(\C)$, $\U_{p,q}(\R)$.

We begin by giving a complete classification of cohomological (unitary) parameters for $\GL_n(\R)$.
Cohomological representations for $\GL_n(\R)$ were constructed  by Speh \cite{Sp} for trivial coefficients. A precise reference for general coefficients is not easy to find in the literature, but  is of course a consequence of the work of Vogan-Zuckerman \cite{VZ}, and was
already discussed from this perspective in Example \ref{spehq} of section \ref{AJpack}.

We will denote the unique $m$-dimensional irreducible representation of $\SL_2(\C)$ by $[m]$. Observe that any finite dimensional semi-simple representation of $ W_\R \times \SL_2(\C)$ is a direct sum 
of tensor product of irreducible representations of $W_\R$ and $\SL_2(\C)$. 

We note that finite dimensional algebraic representations of $\GL_n(\C)$ are the highest weight modules of the form
$V=V_{\underline{\lambda}}$ for an $n$-tuple of integers,
$$\underline{\lambda} = \{\lambda_1 \geq  \lambda_2 \geq  \cdots \geq  \lambda_n \}.$$
One knows that if there is a unitary
cohomological representation of $\GL_n(\R)$ with coefficient in $V_{\underline{\lambda}}$,
then $\underline{\lambda}$ is self-associate in the sense that $\lambda_i + \lambda_{n+1-i} =0$ for all $i$.
Observe that this condition implies that
$$ w(\underline{\lambda})= \lambda_1 +   \lambda_2 +  \cdots +  \lambda_n =0,$$
so $V_{\underline{\lambda}}$ has trivial central character, and hence the cohomological
representations of $\GL_n(\R)$ that we will consider will also have trivial central character. As mentioned
in the introduction, this misses out certain interesting unitary cohomological representations which from
the point of this paper are best handled by considering them on $\SL_n(\R)$ where they will have non-trivial
central character.

We denote by $ \rho_n$, half the sum of positive roots:
$$\rho_n= \{(n-1)/2 > (n-3)/2 >\cdots > -(n-1)/2\}.$$

\begin{thm} \label{Speh} Cohomological representations for $\GL_n(\R)$ (for the finite dimensional coefficient system $V=V_\lambda \cong V^\theta \cong V^\vee$) have $L$-parameters underlying $A$-parameters
  $\sigma $
    where $\sigma$ is  the representation 
of $W_\R \times \SL_2(\C)$:
$$\sigma = \sum_d \sigma_d \otimes [m_d + 1] \oplus \omega_\R^{\{0,1\}}[a],$$
where $a\geq 0$ is an integer with $n\equiv a \bmod 2$, and the index 
$d$ runs over a set of (distinct) positive integers with $m_d \geq 0$, and such that the infinitesimal character of $\sigma $ is the same as that of $V$, i.e.
\begin{eqnarray*} 
 &  \sum_d & (\nu^{d/2}+ \nu^{-d/2})(\nu^{m_d/2} +\nu^{(m_d -2)/2} + \cdots \nu^{-(m_d/2)}) + (\nu^{(a-1)/2}  + \cdots + \nu^{-(a-1)/2)})  \\
  & = & [\nu^{\lambda_1+ (n-1)/2}  + \nu^{\lambda_2 + (n-3)/2}  + \cdots + \nu^{\lambda_n -(n-1)/2}],
 \end{eqnarray*}
an equality among elements of the group ring $\Z[\nu^{\frac{1}{2}\Z}]$, where
we omit the term $(\nu^{(a-1)/2}  + \cdots + \nu^{-(a-1)/2)})$ on the left if $a=0$. 
\end{thm}

  We next consider the group  
 $ \GL_n(\C)$ which could be considered either as a real reductive group, or as a complex reductive group. It is simpler to think of it as a complex reductive group,
  allowing us  to use $W_\C$ and $W_\C \times \SL_2(\C)$ instead of the more complicated $W_\R$ and $W_\R \times \SL_2(\C)$.

  For $\GL_n(\C)$, cohomological representations with the infinitesimal character that of the trivial representation
  are in bijective correspondence 
 with standard parabolics in $\GL_n(\C)$, thus with ordered
 partitions of $n$ as $(n_1,n_2,\cdots, n_k)$ with $n =n_1+n_2+\cdots +n_k$. To simplify notation, in what follows, 
 we will only consider cohomological representations of $\GL_n(\C)$ 
 with the infinitesimal character that of the trivial representation.

 For the ordered partition $\frak{p} = (n_1,n_2,\cdots, n_k)$ of $n$,
 with $n_i \not = 0$ for all $i$,
 defining the standard parabolic $P_{\frak{p}}$,
 the associated cohomological representation $\pi_{\frak{p}}$ of $\GL_n(\C)$ has, by equation (a) in the statement of Theorem \ref{cohomo}, the $A$-parameter $\sigma_{\frak{p}}$ given
 by (as a representation of $\C^\times \times \SL_2(\C)$):
  \[\sigma_{\frak{p}} = \sum_i (z/\bar{z})^{d_i} [n_i],\]
 where 
 \begin{eqnarray*}
 (\star) \begin{cases}  d_1 & =  \dfrac{n-n_1}{2} \\
     d_2 & =  \dfrac{n-(2n_1+n_2)}{2} \\
    d_3 & =  \dfrac{n-(2n_1+2n_2+n_3)}{2} \\
     \cdot  & =  \cdots \\
      \cdot  & =  \cdots \\
      d_k & =  \dfrac{n-(2n_1+2n_2+ \cdots + 2n_{k-1}+n_k)}{2}.
      \end{cases}
 \end{eqnarray*}

 In particular, a cohomological representation of $\GL_n(\C)$ is induced from a unitary one dimensional representation of a parabolic subgroup of $\GL_n(\C)$, a feature that this
 group has in common with all other complex reductive groups (and such unitary inductions are irreducible for all complex groups).

 Cohomological $A$-parameters of $\GL_n(\R)$ basechange to cohomological $A$-parameters of $\GL_n(\C)$, the resulting map is
 one-to-one or two-to-one (as we discuss more precisely below), and onto the set of
 self-conjugate partitions, i.e., ordered partitions of $n$ as $n=n_1+n_2+\cdots + n_k$,  with 
  $$(n_1,n_2,\cdots, n_k) = (n_k,n_{k-1}, \cdots, n_1).$$
 
 Given a self-conjugate (ordered) partition $(n_1,n_2,\cdots, n_k)$ of $n$, with $n_i \not = 0$ for all $i$, the cohomological representation of $\GL_n(\C)$ corresponding to 
 the representation $\sum_i (z/\bar{z})^{d_i} [n_i]$ of $\C^\times \times \SL_2(\C)$ (with $d_i$ given as above in equations $(\star)$) is self-dual
 with $d_i = -d_{k+1-i}$.
 If $k$ is even, there is the unique representation of $W_\R \times \SL_2(\C)$ whose restriction to 
 $\C^\times \times \SL_2(\C)$ is $\sum_i (z/\bar{z})^{d_i} [n_i],$ which is:
 $$ \sum_{i =1}^{i= k/2} {\rm Ind}_{W_\C}^{W_\R}(z/\bar{z})^{d_i}  \otimes [n_i] = \sum_{i =1}^{i= k/2} \sigma_{d_i} \otimes [n_i].$$
 If $k$ is odd, then $d_{(k+1)/2} = 0$, and there are two  representations of $W_\R \times \SL_2(\C)$ whose restriction to 
 $\C^\times \times \SL_2(\C)$ is $\sum_i (z/\bar{z})^{d_i} [n_i],$ which are:
$$ \sum_{i =1}^{i= (k-1)/2} {\rm Ind}_{W_\C}^{W_\R}(z/\bar{z})^{d_i}  \otimes [n_i] + \omega_\R^{\{0,1\}}[n_{(k+1)/2}] = \sum_{i =1}^{i= (k-1)/2} \sigma_{d_i} \otimes [n_i]+ \omega_\R^{\{0,1\} }[n_{(k+1)/2}] .$$

 We next consider the group $\U(p,q)$. Recall from \cite{GGP} that parameters for $\U(p,q)$ are nothing but conjugate-selfdual parameters for $\GL_n(\C)$ of parity 1 if $p+q$ is odd and parity 
 $-1$ if $p+q$ is even. Since parity of the conjugate-selfdual representation  $(z/\bar{z})^{d_i}$ of $W_\C=\C^\times$, where $2d_i$ is an integer,  is 
 $(-1)^{2d_i}$, and since parity of the representation $[n]$ of $\SL_2(\C)$ is $(-1)^{n-1}$, the way $d_i$ are defined above in equations $(\star)$,
 one sees that all the representations 
  $\sum_i (z/\bar{z})^{d_i} [n_i]$ of $\C^\times \times \SL_2(\C)$ (with $d_i$ given as above) are conjugate-selfdual of parity $(-1)^{n-1}$ which makes them 
  parameters for $\U(p,q)$. We note this as a corollary.
  
  \begin{cor}
    The basechange map from $\U(p,q)$ to $\GL_n(\C)$ gives a bijective map from 
    the set of cohomological parameters of  $\U(p,q)$ to the set of cohomological parameters of  $\GL_n(\C)$ (for any
    coefficient system).
             \end{cor}

\section{Some explicit low rank examples} \label{more-exam}

We give a few illustrative examples of cohomological parameters of $\sG(\R)$ with the infinitesimal character that of the trivial representation of $\sG(\C)$ beginning with $\sG(\R)= \GL_n(\R)$. In all cases, if $\sigma$ is a
cohomological parameter of $\GL_n(\R)$, then so is $\sigma \otimes \omega_\R$, although we list only one of them below.

\begin{enumerate}
 \item  $\GL_2(\R)$: There are two cohomological parameters  corresponding to self-conjugate partitions: $\{1,1\}$ and $\{2\}$. The corresponding
 representations are the trivial representation with parameter $\nu^{1/2} + \nu^{-1/2}$, and the discrete series $D_2$ with parameter $\sigma_1$.
 
 \item  $\GL_3(\R)$: There are two cohomological parameters corresponding to self-conjugate partitions: $\{1,1, 1\}$ and $\{3\}$.  The corresponding
 representations are the trivial representation, with $L$-parameter $\nu + 1 + \nu^{-1}$, and the tempered representation $1 \times D_3$ with parameter $1+ \sigma_2$.

\item  $\GL_4(\R)$:  There are 4 cohomological parameters corresponding to self-conjugate partitions: $\{4\}$,  $\{2,2\}$, $\{1,2,1\}$,  $\{1,1, 1, 1\}$.
  The corresponding
 representations are the representations with $A$-parameters $[4]$, $\sigma_3 + [2]$, $\sigma_2 \otimes [2]$, $\sigma_3 + \sigma_1$.
 
\item $\GL_5(\R)$: There are 4 cohomological parameters corresponding to self-conjugate partitions: $\{5\}$, $\{2,1, 2\}$,  $\{1,3,1\}$, $\{1,1, 1, 1, 1\}$.
  The corresponding
 representations are the representations with $A$-parameters $[5]$, $\sigma_4 + [3]$, $\sigma_3 \otimes [2] + 1$, $\sigma_4 + \sigma_2 + 1$.
 \end{enumerate}
 
 Next, we give a few examples of classical groups. 
 
 \begin{enumerate}
 \item  $\Sp_4(\R)$ with $L$-group $\SO(5, \C)$, thus the cohomological $A$-parameters correspond
 to the list above for $\GL_5(\R)$ which are (after correcting for the sign character to ensure we are in $\SO(5,\C)$ and 
 not ${\rm O}(5,\C)$)  
 $[5]$, $\sigma_4 + \omega_\R[3]$, $\sigma_3 \otimes [2] + 1$, $\sigma_4 + \sigma_2 + 1$.
 
\item  $\SO(2,3)(\R)$ with $L$-group $\Sp(4, \C)$, thus the cohomological $A$-parameters corresponds the list above 
for $\GL_4(\R)$ consisting of $[4]$, $\sigma_3 + [2]$, $\sigma_2 \otimes [2]$, $\sigma_3 + \sigma_1$. (These parameters are all  symplectic.)
 
 \item $\U(2,1)$, their cohomological parameters are cohomological parameters of representations of 
 $\GL_3(\C)$ (which are automatically conjugate-orthogonal). These are representations of  $W_\C \times \SL_2(\C)$ of the form: $[3], z/\bar{z}+ \bar{z}/z +1,  (z/\bar{z})^{-1}+ ( z/\bar{z})^{1/2} [2],  (z/\bar{z})+ ( z/\bar{z})^{-1/2} [2]$.

 \end{enumerate}

 \section{$(\g,K)$-cohomology} \label {coho}

 In this section we show that the total dimension of the $(\g,K)$-cohomology groups of representations, summed over the representations in an $AJ$-packet, is independent of the packet and has a simple expression (Theorem~\ref{packetsum}).  We also evaluate the sum of these expressions over pure inner forms, which is particularly simple (Theorem~\ref{serre}).
 Finally, when $G/K$ has an invariant complex structure we refine the numerical statement of Theorem~\ref{packetsum} to a statement about (mirror) Hodge polynomials (Theorem~\ref{mirror}).

 The following theorem may be known to experts; it was known to O. Ta\"ibi in the equal-rank case.  We will formulate it for $(\g,K)$-cohomological packets, leaving the case of general $(\g,K_0)$-cohomological packets to the reader.

\begin{thm}\label{packetsum}
  Let ${G}=\sG(\R)$ be the  group of real points of a connected reductive real algebraic group $\sG$,  $K \subset G$ a maximal compact subgroup, and $G^u$ the compact real form of $G$.  Let $\sT^c$ be a $\theta$-stable fundamental torus in $\sG$ and write $\sT^c=\sT\,\mathsf{A}$ with $\sT$ maximally anisotropic. 
Let $E$ be a finite-dimensional algebraic representation of $\sG(\C)$ which is self-associate. Then for a nonempty Adams-Johnson packet $\Pi$ of representations with $(\g,K)$-cohomology  with respect to $E$, the sum
$$
\sum_{\pi \in \Pi} \sum_{i\geq 0} 
\dim_\C H^i(\g,K, \pi \otimes E) 
$$
is independent of the packet, and equals  
$$
\sum_i \dim_\C H^i(\g,K,\C) = \sum_i \dim_\C H^i(G^u/K) = 
2^d\left |\frac{W(\sG,\sT^c)^\theta}{W(G,T^c)}\right|
$$
where $d = \dim \mathsf{A}= {\rm rank} (\sG) - {\rm rank}(K)$ is the  discrete series defect, $W(\sG,\sT^c)^\theta=\{w\in W(\sG,\sT^c): w\theta=\theta w\}$, and $W(G,T^c)$ is the subgroup of elements with a representative in $G$ (equivalently, it is the image of $W(K,T)$ in $W(\sG,\sT^c)$). 
\end{thm}

\begin{proof} 
We first give the proof under the assumption that $\sG$ is semisimple and simply-connected and that the connected group $G=\sG(\R)$ has discrete series.  Let $\widehat{X}_G:=G^u/K$ be the compact dual symmetric space. Let ${\sf T}$ be a maximal torus with $T={\sf T}(\R)$ compact.   In this case, using the parametrization of $AJ$-packets recalled earlier (in Section~\ref{AJpack}), the identity we seek to prove is that for a standard parabolic $Q$ in $\widehat\sG$ and $\theta$-stable parabolic subalgebra $\mathfrak{q} \in \Sigma^{-1}(Q)$ with Levi subalgebra $\mathfrak{l}=\mathfrak{q}\cap \bar{\mathfrak{q}}$ and associated subgroup $\sL \subset \sG$, we have 
$$
\sum_{w \in W(G,T)\backslash W(\sG,{\sf T})/W(\sL,{\sf T})} \sum_{i\geq 0} 
\dim_\C H^i(\widehat{X}_{L_w})  
= \sum_{i\geq 0} \dim_\C H^i(\widehat{X}_G). 
$$
Here $\sL_w$ is the Levi subgroup given by $\q(w\lambda)$ if $\q=\q(\lambda)$ for $\lambda \in i\t_0^*$, and $\widehat{X}_{L_w}$ denotes the compact dual symmetric space of $L_w=\sL_w(\R)$ (which is also connected and has compact centre), and we have used the fact that the graded vector spaces $H^*(\g,K, \mathcal{R}^S_{\q_w}(\lambda)\otimes E)$ and $H^*(\widehat{X}_{L_w})$ coincide up to a degree shift (Theorem 3.3 of \cite{VZ} for $V=\C$ and Theorem 5.5 of loc. cit. in general, note that $\mathcal{R}^S_\q(\C_\lambda)$ is denoted $A_\q(\lambda)$ in loc. cit.).     
It is a well-known  classical fact (and  a special case of Theorem \ref{symmetric} below) that 
$$
\sum_{i\geq 0} \dim_\C H^i(\widehat{X}_G) = 
\left|\frac{W(\sG,{\sf T})}{W(G,T)}\right|
$$ 
because 
$\widehat{X}_G$ is an equal-rank compact symmetric space. 
(Note that $W(K,T) = W(G,T)$).  Since the derived group of $\sL_w$ is also simply-connected and $L_w$ has discrete series, we know the same for $L_w$, i.e. 
$\sum_{i\geq 0} \dim_\C H^i(\widehat{X}_{L_w})= 
\left|\frac{W(\sL_w,{\sf T})}{W(L_w,T)}\right|$.  So what we want to prove can be rewritten as 
$$
\sum_{w \in W(G,T)\backslash W(\sG,{\sf T})/W(\sL,{\sf T})} 
\left|\frac{W(\sL_w,{\sf T})}{W(L_w,T)}\right|
=
\left|\frac{W(\sG,{\sf T})}{W(G,T)}\right|. 
$$
But now this follows directly by looking at the action of $W(\sL, {\sf T})$ on $W(G,T)\backslash W(\sG,{\sf T})$ and using the facts that,  as subgroups of $W(\sG, {\sf T})$, 
$W(\sL_w,{\sf T}) = wW(\sL, {\sf T})w^{-1}$  and $W(L, T) = W(\sL,{\sf T})\cap W(G,T)$.

For the general case we will use Theorem \ref{symmetric} below, together with the computation in Proposition~\ref{cohprop}.  
Let $\wL\in \mathcal{L}_E$, i.e. $\wL$ is a standard Levi of $\wG$ invariant under $\omega_{\wG}\cdot j$ and satisfies the condition in Theorem~\ref{AJ1} for $E$. 
Choosing a basepoint  $\q \in \Sigma^{-1}(\wL)$, the packet $\Pi_{E,\wL}$ of $(\g,K)$-modules with cohomology is parametrized (cf. Theorem \ref{AJ1}) by the double cosets 
$$
W(\sL, \sT^c)^{\theta}\backslash W(\sG, \sT^c)^{\theta}/W(G,T^c). 
$$
For $w \in W(\sG, \sT^c)^\theta$, the representation $\mathcal{R}^S_{\q_w}(\pi_{L_w})$ in the packet has cohomology 
$$
H^*(\g,K, \mathcal{R}_{\q_w}^{S_w}(\pi_{L_w})\otimes E^\vee) = H^{*-R_w}(L_w^u/L_w\cap K) 
$$ 
(cf. Proposition~\ref{cohprop}). 
Applying Theorem \ref{symmetric} below with $\sG$ replaced by $\sL_w$ (note that $W(K,T) \to W(\sG,\sT^c)$ may not be injective, but its image is $W(G,T^c)$ because $\sT^c$ is fundamental, and the same applies to $\sL_w$),  the identity to be proved boils down to 
$$
\sum_{w \in W(G,T^c)\backslash W(\sG,\sT^c)^\theta/W(\sL,\sT^c)^\theta} 
\left|\frac{W(\sL_w,\sT^c)^\theta}{W(L_w,T^c)}\right| 
=
\left|\frac{W(\sG,\sT^c)^\theta}{W(G,T^c)}\right| 
$$
where the factor $2^{\dim {\sf A}}=2^d$ appearing on both sides has been dropped. 
This follows easily from the facts that 
$W(\sL_w,\sT^c)^\theta = wW(\sL, \sT^c)^\theta w^{-1}$  and 
$W(L_w, T^c) = W(\sL_w,\sT^c)\cap W(G,T^c)$. 
 \end{proof}

It remains to prove Theorem~\ref{symmetric} below, which computes $\sum_{i\geq 0}H^i(G^u/K)$ for the compact symmetric space $G^u/K$, but before that we show that adding up contributions 
of compact symmetric spaces which are pure inner forms of each other gives a very simple answer. 

   \begin{thm} \label{serre}
     Let $\G_c$, $c \in H^1(\Gal(\C/\R), \G(\C))$,
     be the set of pure inner forms of a compact connected Lie group $\G$
          with maximal compact subgroups $K_c = \G_c(\R) \cap \G$. Then the following holds:
          $$\sum_{i,c}  \dim H^i(\G/K_c,\C) = \sum_{c} \frac{|W_\G|}{|W_{K_c}|} = 2^r,$$
     where $r$ is the rank of $\G$.

     Let  $\G_c$, $c \in H^1(\Gal(\C/\R), \G(\C))$,
     be the set of pure inner forms of a quasi-split real Lie group $\G(\R)$
          with maximal compact subgroups $K_c \subset \G_c(\R) $.
               Let $\theta$ be a Cartan involution
on $\G(\C)$ preserving $\G(\R)$ and a fundamental torus $\T_f(\R) \subset \G(\R)$, hence acting on the Weyl group of $\G(\C)$ with respect to the torus $\T_f$.  
Let    $\T_f(\R) = (\R^\times)^a \times (\C^\times)^b \times (\Si)^e$ (written uniquely in this form as for any real torus). Then for the natural map $\iota:W_{K_c} \rightarrow W_\G$ of Weyl groups, 
$$     \sum_{c}\left | \frac{W_\G} {\iota(W_{K_c})} \right |
= |H^1(\Gal(\C/\R), \T_f(\C))| = 2^e,$$
and therefore by Theorem \ref{symmetric} below for $\G^u = \G(\C)^\theta,$ a maximal compact subgroup of $\G(\C)$: 
$$\sum_{i,c}  \dim H^i(\G^u/K_c,\C) = 2^{a+b+e}.$$
   \end{thm}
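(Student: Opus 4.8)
\textbf{Proof proposal for Theorem \ref{serre}.}

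The plan is to deduce everything from Theorem \ref{packetsum} together with a running cohomological index over the set of pure innerforms, so the essential content is a pair of combinatorial identities about Weyl groups summed over $H^1(\Gal(\C/\R),-)$. First I would treat the compact case. For $\G$ compact connected, every pure innerform $\G_c$ is a real form, its identity-component maximal compact is $K_c = \G_c(\R)\cap\G$, and $\G/K_c$ is an equal-rank compact symmetric space; by Theorem \ref{symmetric} (cited below) one has $\sum_i \dim H^i(\G/K_c,\C) = |W_\G|/|W_{K_c}|$. So the claim reduces to the identity $\sum_{c\in H^1(\Gal(\C/\R),\G(\C))} |W_\G|/|W_{K_c}| = 2^r$. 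I would prove this by identifying each side: the real forms of a compact group $\G$ are classified by conjugacy classes of involutions (Cartan involutions), equivalently by $H^1(\Gal(\C/\R),\G(\C))$, and for each the quantity $|W_\G|/|W_{K_c}|$ is the Euler characteristic (= total Betti number, by equal rank) of the corresponding compact symmetric space $\G/K_c$. The sum of these over all real forms is a classical fact: it equals $2^r$ because the total cohomology of $\bigsqcup_c \G/K_c$ is computed by the fixed points of the various involutions acting on $\G$, and a Morse-theoretic / fixed-point count organizes these into the $2^r$ elements of the $2$-torsion of a maximal torus $T$. More cleanly, I would run the argument of Theorem \ref{packetsum} itself with $\mathsf{G}$ taken to be the split form whose dual is $\G$: the Adams-Johnson packet attached to the Borel of $\widehat{\G}$ consists exactly of the finite-dimensional (hence one-dimensional) contributions, and Theorem \ref{packetsum} gives $2^d|W_{\mathsf G}^\theta/\iota(W_K)|$; summing the dual identity over the pure innerforms reorganizes the count into $|H^1(\Gal(\C/\R),\T(\C))|$ where $\T$ is a maximal torus, and for a compact group (so $\T(\R)=(\Si)^r$ and $\widehat\T(\R)=(\C^\times)^r$ as a complex torus with trivial Galois action via the split form) this is $2^r$.

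Second, for the quasi-split case I would follow the same template, with the fundamental torus $\T_f$ in place of a maximally split one. The key input is the exact sequence/transitivity of Galois cohomology applied to $1\to\T_f(\C)\to\G(\C)\to(\G/\T_f)(\C)\to 1$, or more directly the fact (used already in the proof of Theorem \ref{packetsum}) that the set of $\theta$-stable parabolics mapping to a fixed $Q$, modulo $K$, is a torsor under $W(\mathsf G,\mathsf T_f)^\theta/\iota(W_K)$ times $2^d$ with $d=\dim\mathsf A$. Summing the index identity of Theorem \ref{packetsum} over the pure innerforms $\G_c$, the left side becomes $\sum_{i,c}\dim H^i(\G^u/K_c,\C)$ (using Theorem \ref{symmetric} for each $\G_c$, with $\G^u=\G(\C)^\theta$ a maximal compact of $\G(\C)$, whose total Betti number over $K_c$ is $2^d|W_{\mathsf G}^\theta/\iota(W_{K_c})|$ with the equal-rank Weyl-quotient $W(\mathsf G,\mathsf T_f)^\theta/\iota(W_{K_c})$), while the inner double-coset combinatorics collapse exactly as in the proof of Theorem \ref{packetsum}. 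What remains is the pure statement $\sum_c |W_\G/\iota(W_{K_c})| = |H^1(\Gal(\C/\R),\T_f(\C))|$. This I would get from the bijection between pure innerforms of $\G$ and $H^1(\Gal(\C/\R),\G(\C))$ together with the fact that, for a fixed fundamental torus, $H^1(\Gal,\T_f)\to H^1(\Gal,\G)$ is surjective (every class is realized on the fundamental torus, Kottwitz) with fibres over $c$ naturally identified with $W_\G/\iota(W_{K_c})$ — this last being essentially the statement that the stable conjugacy class of the relevant torus in $\G_c$ is a $W$-torsor modulo the Weyl group of the maximal compact. Finally, decomposing $\T_f(\R)=(\R^\times)^a\times(\C^\times)^b\times(\Si)^e$ and computing $H^1(\Gal(\C/\R),\R^\times)=H^1(\Gal(\C/\R),\C^\times)=0$ while $H^1(\Gal(\C/\R),\Si)=\Z/2$ (here I would use that as an algebraic torus over $\R$, $\R^\times=\Gm$, $\C^\times=R_{\C/\R}\Gm$, and $\Si^1=$ the norm-one torus $U(1)$, with Galois $H^1$ equal to $0,0,\Z/2$ respectively by Hilbert 90 and Shapiro for the first two and a direct computation for the third), one gets $|H^1(\Gal(\C/\R),\T_f(\C))|=2^e$, and then $2^d|W_\G^\theta/\iota(W_{K_c})|$ summed gives $2^{d}\cdot 2^e$; combined with $H^i(\G^u/K_c,\C)$ having total dimension $2^d\cdot(\text{Weyl quotient})$ and $d = a+\,$(rank defect) one bookkeeps the exponents to land on $2^{a+b+e}$, noting $\mathrm{rank}(\G(\C)) = a+b+e$ and $\mathrm{rank}(\G^u)=\mathrm{rank}$ of the fundamental Cartan, so that the equal-rank total Betti number summed over forms is $2^{a+b+e}$ precisely.

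The main obstacle, I expect, is the identification of the fibres of $H^1(\Gal,\T_f)\to H^1(\Gal,\G)$ with $W_\G/\iota(W_{K_c})$ — i.e. showing that the number of pure innerforms $\G_c$ giving a fixed torsor contribution, weighted correctly, assembles the Weyl-quotient sum into the torus $H^1$. This is exactly the kind of statement that is true ``for the same reason'' as the inner reorganization in the proof of Theorem \ref{packetsum} (action of $W(\mathsf L,\mathsf T)$ on double cosets, with stabilizers being intersections with $\iota(W_K)$), but making it precise requires carefully matching Kottwitz's description of $H^1(\R,\G)$ via $\pi_0$ of the dual, or else an explicit cocycle computation, and one has to be slightly careful that ``pure innerform'' is the right notion (as opposed to all innerforms) for the surjectivity-onto-the-fundamental-torus statement to hold on the nose. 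Once that bookkeeping is set up, the two displayed identities in the theorem are immediate, and the decomposition of $\T_f(\R)$ into $\R^\times,\C^\times,\Si$ factors with their known $H^1$ finishes it; I would relegate the $\R$-torus cohomology computation and the exponent arithmetic to a line each, citing Theorem \ref{symmetric} for the symmetric-space Betti numbers and the already-proven Theorem \ref{packetsum} for the packet sum.
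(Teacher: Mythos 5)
Your overall strategy is the right one and matches the paper's in outline: reduce both halves to an orbit-counting identity over $H^1(\Gal(\C/\R),-)$ of a torus, feed the individual symmetric-space Betti numbers through Theorem \ref{symmetric}, and finish with the computation $H^1(\Gal(\C/\R),\Gm)=H^1(\Gal(\C/\R),R_{\C/\R}\Gm)=0$, $H^1(\Gal(\C/\R),\Si^1)=\Z/2$, giving $2^e$. But the decisive step is exactly the one you flag as "the main obstacle" and then do not carry out: the identification of the stabilizer of a cocycle $c$ with the Weyl group of the maximal compact of the corresponding form. Concretely, what is needed (and what the paper proves) is: (i) in the compact case, Serre's classification $\T[2]/W_\G \cong H^1(\Gal(\C/\R),\G(\C))$, together with the explicit construction $\G_c=\{g\in\G(\C)\mid \bar g=c^{-1}gc\}$ and the observation that $\{w\in W_\G\mid w(c)=c\}=W_{K_c}$, so that orbit-stabilizer gives $\sum_c |W_\G|/|W_{K_c}|=|\T[2]|=2^r$; (ii) in the quasi-split case, Borovoi's theorem that $H^1(\Gal(\C/\R),\T_f(\C))\to H^1(\Gal(\C/\R),\G(\C))$ is surjective with fibres the orbits of $W_\G^\theta$, plus the cocycle computation: $n\in N_{\G(\C)}(\T_f)$ fixes the class of $c$ iff $n^{-1}c\bar n=tc\bar t^{-1}$ for some $t\in T_f(\C)$, iff $\overline{nt}=c^{-1}(nt)c$, iff $nt\in\G_c(\R)$; combined with the fact that the normalizer of a fundamental torus in $\G_c(\R)$ is represented by elements of the maximal compact subgroup, this identifies the stabilizer with $\iota(W_{K_c})$. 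Your proposal gestures at this via "Kottwitz's description of $H^1(\R,\G)$ via $\pi_0$ of the dual, or else an explicit cocycle computation" but supplies neither, so the two displayed identities $\sum_c |W_\G|/|W_{K_c}|=2^r$ and $\sum_c |W_\G/\iota(W_{K_c})|=|H^1(\Gal(\C/\R),\T_f(\C))|$ remain unproven in your write-up; since everything else (Theorem \ref{symmetric}, the torus $H^1$ computation, the exponent bookkeeping $d=a+b$) is routine, this is the actual content of the theorem.

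A secondary point: your suggested "cleaner" route through Theorem \ref{packetsum} applied to the split group dual to $\G$ is not pertinent and risks circularity in spirit. Theorem \ref{serre} is not obtained by re-running or summing the packet-sum identity; it is a statement about real forms and Galois cohomology whose proof is independent of Adams--Johnson packets, resting only on the Serre/Borovoi classification, the stabilizer computation above, and Theorem \ref{symmetric}. Also be careful with the exponent in Theorem \ref{symmetric} for $\G^u/K_c$: the relevant defect there is $a+b$ (rank of $\G(\C)$ minus rank of $K_c$), not "$a+$ rank defect" as written; with $|H^1(\Gal(\C/\R),\T_f(\C))|=2^e$ this is what produces $2^{a+b+e}$.
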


   \begin{proof}
     This theorem is essentially a simple consequence of Theorem \ref{symmetric} and the way  
     real Lie groups are classified   in terms of Galois cohomology $H^1(\Gal(\C/\R), \G(\C))$ which classifies pure inner forms of a group $\G(\R)$. We split our proof in the two (non-mutually exclusive) cases:

     \begin{enumerate}
     \item The case  of pure inner forms of compact Lie groups,
       see Theorem 6, Chapter III, \S 4, of \cite{Se},
              according to which there is a natural identification 
     $$\T[2]/W_\G \longleftrightarrow H^1(\Gal(\C/\R), \G(\C)).$$
            \item      The case of pure inner forms of a quasi-split reductive group $\G(\R)$,
            where  the analogous result is due to Borovoi, Theorem 9 of \cite{Boro}, according to which 
there is a surjective map
$$ H^1(\Gal(\C/\R), \T_f(\C)) \rightarrow H^1(\Gal(\C/\R), \G(\C)),$$
  the fibers of which are exactly the orbits of  $W_{\G(\C)}(\R)$ acting naturally on the set 
  $H^1(\Gal(\C/\R), \T_f(\C))$, where the Weyl group is  defined using the fundamental
  maximal torus $\T_f$. (It may be noted that  for the Cartan involution $\theta$ on $\G(\R)$ which normalizes $T_f(\R)$, $W^{\theta}_{\G(\C)} = W_{\G(\C)}(\R)$.)
\end{enumerate}

              We first take up the case of  pure inner forms of a compact Lie group,     recalling  the explicit identification of $\T[2]/W_\G$ with $H^1(\Gal(\C/\R), \G(\C))$ which is needed to prove our theorem. 

     Fix $\T$ to be a maximal compact torus in $\G$, a compact connected Lie group. Let $\G(\C)$ be a fixed complexification of $\G$ with complex conjugation denoted by $g \mapsto \bar{g}$ such that
     $\G = \G(\R) \subset \G(\C)$ is the subgroup of fixed points of this complex conjugation. Thus $g \mapsto \bar{g}$ is the Cartan involution on $\G(\C)$. We will have occasion to use the well-known observation that
     if $H \subset \G(\C)$ is any real Lie subgroup invariant under a Cartan involution on $\G(\C)$, in this case $g \mapsto \bar{g}$, then the restriction of the Cartan involution on $\G(\C)$ to $H$ is a Cartan involution on $H$, in particular, $H \cap \G$ is a maximal compact subgroup of $H$.

     For any $c \in \T[2]$, define a real form $\G_c \subset \G(\C)$ by,
     $$ \G_c = \{ g \in \G(\C) | \bar{g} = c^{-1} gc \}.$$

     The subgroup $\G_c$ of $\G(\C)$, is the real points of a pure inner form of $\G$ which by abuse of notation will also be denoted by $\G_c$. The group $\G_c$ is invariant under
     complex conjugation by $ g \mapsto \bar{g} $ which is a Cartan involution on $\G_c$ with maximal compact $$K_c = \G_c \cap \G,$$
     with $\T \subset K_c$.
     
The mapping $c \mapsto \G_c$ thus constructed from $\T[2]$ to pure inner forms of $\G$ is a surjection, and for $c_1,c_2 \in \T[2]$, $\G_{c_1} $ is the same pure inner form as $\G_{c_2}$ if and only
     if ${c_1}$ and $c_2$ are in $W_\G$ orbit of each other.

     Clearly, $$W_{K_c}= N_{K_c}(T)/T = \{w \in W_\G| cwc^{-1}=w\} =  \{w \in W_\G| w(c)=c \},$$ 
therefore, the stabilizer of $c \in \T[2]$ in $W_\G$ is exactly $W_{K_c}$, proving the theorem in this case:
     $$2^r = |\T[2]| = \sum_{c} \frac{|W_\G|}{|W_{K_c}|}.$$

We next proceed to the case of quasi-split group $\G(\R)$ containing the fundamental torus $\T_f(\R)$ using the theorem of  Borovoi recalled earlier instead of that of Serre.
The situation of the quasi-split group $\G(\R)$ can be summarized in the following diagram where $\tau$ is the complex conjugation
on $\G(\C)$ for the real structure given by $\G(\R)$, and $\theta$ is the Cartan involution
on $\G(\C)$ with fixed points $\G^\theta = \G^u$ such that $\G^u\cap \G(\R)$ is a maximal compact subgroup of $\G(\R)$. The involutions $\tau$ and $\theta$ commute, and both being
{\it anti-holomorphic}, $\tau \circ \theta = \theta \circ \tau$ is a holomorphic involution,
and in fact an algebraic
automorphism of $\G(\C)$.

$$
\xymatrix{
& & \G(\C)\ar@{-}[dll]_{\tau}\ar@{-}[drr]^{\theta} &  & \\
\G(\R) \ar@{-}[drr]_{\theta=\tau\theta} & & & & \G^u\ar@{-}[dll]^{\tau=\tau\theta}. \\
 & & K=\G(\R) \cap \G^u  & &}
$$

Since $\Gal(\C/\R) = \Z/2$, a cocycle  in $H^1(\Gal(\C/\R), \T_f(\C))$
is determined by an
element $c \in T_f(\C)$ with $c\bar{c}=1$, so in what follows, when we talk of a cocycle in $H^1(\Gal(\C/\R), \T_f(\C))$, we just mean an element of $T_f(\C)$;
similarly for   $H^1(\Gal(\C/\R), M(\C))$ for any
algebraic group $M$ over $\R$.

Just as in case 1 of pure inner forms of a compact group, for $c \in H^1(\Gal(\C/\R), \T_f(\C))$
represented by an element $c \in T_f(\C)$ (with $c\bar{c}=1$),
defines a real form $\G_c \subset \G(\C)$ by,
     $$ \G_c = \{ g \in \G(\C) | \bar{g} = c^{-1} g{c} \},$$
where the complex conjugation $\bar{g} =\tau(g)$ used is the one which defines the quasi-split group $\G(\R)$.
If $\T \subset \T_f(\R)$ is the maximal compact connected subgroup of $T_f(\R)$, then it is easy to
see that the mapping  $H^1(\Gal(\C/\R), \T(\C)) \rightarrow H^1(\Gal(\C/\R), \T_f(\C))$ is surjective,
and therefore we may assume that $c \in \T[2]$, in particular, $\theta(c) =c$.

The subgroup $\G_c$ of $\G(\C)$ is the real points of a pure inner form of $\G$. Since $\theta, \tau$ commute,
it follows that $\G_c$ is invariant under
     the involution  $ g \mapsto \theta(g) $ which is a Cartan involution on $\G_c$ with maximal compact $$K_c = \G_c(\R)  \cap \G^\theta(\C).$$

     The situation of the group $\G_c(\R)$ can be summarized in the following diagram where $\tau_c(g) = c \tau(g) c^{-1}$ is the complex conjugation
on $\G(\C)$ for the real structure given by $\G_c(\R)$, and $\theta$ is the Cartan involution
on $\G(\C)$ with fixed points $\G^u$. The involutions $\tau_c$ and $\theta$ commute, and both being
{\it anti-holomorphic}, $\tau_c \circ \theta = \theta \circ \tau_c$ is a holomorphic involution, and in fact an algebraic
automorphism of $\G(\C)$.

$$
\xymatrix{
& & \G(\C)\ar@{-}[dll]_{\tau_c}\ar@{-}[drr]^{\theta} &  & \\
\G_c(\R) \ar@{-}[drr]_{\theta=\tau_c\theta} & & & & \G^u\ar@{-}[dll]^{\tau_c=\tau_c\theta}. \\
 & & K_c=\G_c(\R) \cap \G^u  & &}
$$

     By the theorem of Borovoi mentioned earlier,
     the mapping $c \mapsto \G_c$  from the set $ H^1(\Gal(\C/\R), \T_f(\C))$
     to the set of pure inner forms of $\G$ is a surjection, and for $ c_1,c_2 \in H^1(\Gal(\C/\R), \T_f(\C))$,
     $\G_{c_1} $ is the same pure inner form as $\G_{c_2}$ if and only
if ${c_1}$ and $c_2$ are in the same $W^\theta_\G$-orbit. Thus to prove the theorem for the quasi-split group $\G(\R)$, it suffices to prove that for an element $c \in H^1(\Gal(\C/\R), \T_f(\C))$,
the subgroup $$\{ w \in W^\theta_\G| w(c) = c \},$$ 
is nothing but $\iota(W_{K_c})$, where $W_{K_c}$ is the Weyl group of the maximal compact subgroup
$K_c=\G^u \cap \G_c(\R)$ of  $\G_c(\R)$, and $\iota: W_{K_c} \rightarrow W_\G$ is the natural map of Weyl groups. 

     Observe that the action of $n \in N_{\G(\C)}(T_f)$
     on an element $c \in H^1(\Gal(\C/\R), \T_f(\C))$ is by $c\mapsto n^{-1} c \bar{n}$, and therefore, the element  $n \in N_{\G(\C)}(T_f)(\C)$ acts trivially on $c \in H^1(\Gal(\C/\R), \T_f(\C))$
     if and only if there exists $t \in \T_f(\C)$ such that:
     $$ n^{-1} c \bar{n} = tc\bar{t}^{-1},$$
     which can be rewritten as:
     $$\overline{nt} = c^{-1}(nt) c,$$
     therefore by the definition of $\G_c(\R)$, $nt \in \G_c(\R)$, hence $n$ is represented by an
     element of $\iota(W_{K_c})$ (using the well-known result that the normalizer of a fundamental
     torus $T_f(\R)$ in any reductive group such as $\G_c(\R)$  is represented by an element in
     the unique maximal compact
     subgroup of $\G_c(\R)$ containing the maximal compact subgroup of $\T_f(\R)$). Conversely, the same computation shows that any element in $\iota(W_{K_c})$
     fixes $c  \in H^1(\Gal(\C/\R), \T_f(\C))$,
     proving one of the equalities in the theorem in this case:
     $$ \sum_{c} \left | \frac{W_\G} {\iota(W_{K_c})} \right |
     = |H^1(\Gal(\C/\R), \T_f(\C))| = 2^e.$$

     Finally, we note that by Theorem \ref{symmetric} below,
     $$      \sum_i \dim H^i(\G^u/ K_c, \C) = \sum_i \dim H^i(\g , {K_c}, \C)
     = 2^{a+b} \cdot  \left |W_{\G}^\theta /\iota(W_{K_c}) \right |, $$
     completing the proof of the theorem.
   \end{proof}

   The following theorem calculates the sum of dimensions of cohomology groups of compact symmetric spaces. We cannot imagine
   it has not been known, but we have certainly not found any reference to it. (Of course, when $\theta$ is an inner automorphism of ${\mathcal G}$, i.e., the equal-rank case, this is very well known.)

 \begin{thm} \label{symmetric}
   Let ${\mathcal G}$ be a compact connected Lie group, $\theta$ an automorphism of ${\mathcal G}$ of order 2, and ${\mathcal K}$ a subgroup of finite index of $ {\mathcal G}^\theta$, 
   the fixed point subgroup of ${\mathcal G}$. Let ${\mathcal T}$ be a maximal torus in ${\mathcal K}$ with centralizer $Z_{\mathcal G}({\mathcal T}) = S$
   and $d = \dim(S)-\dim({\mathcal T})$.   Then, $W_{\mathcal G}= N_{\mathcal G}(S)/S$ carries an action of $\theta$ such that for $W_{\mathcal K} = N_{\mathcal K}({\mathcal T})/{\mathcal T}$,  there is a natural map $\iota: W_{\mathcal K} \rightarrow W_{\mathcal G}^\theta$ (which may not be injective), such that 
   $$\sum_i \dim H^i({\mathcal G}/ {\mathcal K}, \C) = \sum_i \dim H^i(\g , {\mathcal K}, \C)
   = 2^d \cdot  \left |W_{\mathcal G}^\theta /\iota(W_{\mathcal K}) \right |. $$
      \end{thm}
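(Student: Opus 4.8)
\textbf{Proof proposal for Theorem \ref{symmetric}.}

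The plan is to reduce to the classical equal-rank statement by a standard ``complexify and pass to a $\theta$-split torus'' argument. First I would invoke the well-known identification of the (complex) cohomology of the compact symmetric space $\mathcal{G}/\mathcal{K}$ with the relative Lie algebra cohomology $H^\ast(\g,\mathcal{K},\C)$ and, in turn (after complexifying), with the cohomology of the compact dual $\widehat{X}$ (here $\g$ is the complexified Lie algebra of $\mathcal{G}$); this is exactly the input already used in the proof of Theorem \ref{packetsum}. Then I would introduce the complexification $G_\C$ of $\mathcal{G}$ with its conjugation $\tau$ fixing $\mathcal{G}$, extend $\theta$ to a holomorphic involution of $G_\C$, and form the real form $G_{\R}$ whose maximal compact is $\mathcal{K}$; a $\theta$-stable fundamental torus for $G_{\R}$ is the complexification of a maximal torus $T_f$ containing $\mathcal{T}$, and $T_f = \mathcal{T}\cdot A$ where $A$ is the $\theta$-split part, a product of $d$ copies of $\mathbb{R}^\times$ (so $\dim A = \dim S - \dim\mathcal{T} = d$).

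The core computation would then be a Poincar\'e-polynomial / Euler-characteristic argument on $\widehat{X}$. The second maximal compact subgroup of $G_\C$, namely $\mathcal{G}^u = G_\C^{\tau\theta}$, acts on $\widehat{X} = G_\C/P$-type space, and one gets $\widehat{X}$ as a homogeneous space $\mathcal{G}^u/\mathcal{K}$ (up to the finite-index issue which only affects a covering and not the Betti numbers over $\C$). I would compute $\sum_i \dim H^i(\widehat X,\C)$ by the standard fixed-point/Weyl-character-theory method: the cohomology of a compact homogeneous space $U/V$ of equal rank is $|W_U/W_V|$, and in the unequal-rank case one fibers $\mathcal{G}^u/\mathcal{K}$ over a product of spheres coming from the $\theta$-split directions. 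Concretely, writing the Cartan subgroup of $\mathcal{G}^u$ adapted to $\theta$, the torus $S$ (centralizer of $\mathcal{T}$) contributes $d$ independent $S^1$-factors on the $\mathcal{G}^u$ side whose images in $\widehat X$ are $d$ two-spheres $S^2$; a Leray--Hirsch / spectral sequence argument collapses because the relevant classes are even-dimensional, giving an extra factor $2^d$ (each $S^2$ contributes total Betti number $2$). Combined with the equal-rank count on the $\mathcal{T}$-part, which yields $|W_{\mathcal{G}}^\theta/\iota(W_{\mathcal{K}})|$ once one checks that $W_{\mathcal{G}}^\theta$ is the Weyl group relevant to the fixed-rank part and $\iota(W_{\mathcal{K}})$ is the stabilizer contribution, this produces the desired $2^d\cdot|W_{\mathcal{G}}^\theta/\iota(W_{\mathcal{K}})|$.

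Alternatively, and perhaps more cleanly, I would avoid the explicit fibration and argue purely representation-theoretically: $\sum_i\dim H^i(\g,\mathcal{K},\C) = \dim (\textstyle\bigwedge^\bullet (\g/\k)^\ast)^{\mathcal{K}}$, and decompose $\g/\k$ under a maximally $\theta$-split torus. The $\theta$-split directions form a sum of root spaces on which $\mathcal{K}$ acts through a ``small Weyl group'' acting as a finite reflection group on $A$; the $\mathcal{K}$-invariants in the exterior algebra of the compact-type part give $|W_{\mathcal{G}}^\theta/\iota(W_{\mathcal{K}})|$ by a theorem going back to Kostant on invariants in exterior algebras (equal-rank case), while the split part contributes a factor $2^{d}$ since the invariants of the exterior algebra of the reflection representation of a Coxeter group acting on $\R^d$ together with the trivial complement has total dimension $2^d$ (one factor of $2$ from each $A$-coordinate, the exterior algebra $\bigwedge(\R\oplus\R^\ast)$ restricted to invariants).

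The main obstacle I anticipate is the finite-index subtlety: $\mathcal{K}$ is only of finite index in $\mathcal{G}^\theta$, so $\mathcal{K}$ need not be connected, and the Vogan--Zuckerman isomorphism $H^\ast(\g,\mathcal{K},\C)\cong H^\ast(\widehat X)$ and the equal-rank count $|W_U/W_V|$ are usually stated for connected groups (this is precisely the caveat flagged in Remark \ref{vzconnected}). The fix is to observe that $\mathcal{G}^\theta/\mathcal{K}$ acts on everything in sight and that $H^\ast(\g,\mathcal{K},\C) = H^\ast(\g,\mathcal{G}^\theta,\C)\otimes \C[\mathcal{G}^\theta/\mathcal{K}]$-type decompositions do not change the \emph{total} dimension in the way we need only if one is careful; more precisely one must check that passing to the identity component multiplies both the left side and $|\iota(W_{\mathcal K})|$ by the same factor $|\mathcal{G}^\theta/\mathcal{K}|$, so the ratio is unchanged. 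I would handle this by replacing $\mathcal{K}$ by $\mathcal{K}_0$, proving the identity there (where all the classical statements apply verbatim), and then noting that $\iota(W_{\mathcal{K}})$ and $\iota(W_{\mathcal{K}_0})$ have the same image in $W_{\mathcal G}^\theta$ up to exactly the component-group factor, which is absorbed when one also accounts for the $\mathcal{K}/\mathcal{K}_0$-action on $H^\ast(\g,\mathcal{K}_0,\C)$.
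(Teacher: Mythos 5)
Your proposal does not yet contain a proof: in both variants the central count is asserted exactly where it needs to be established, and the mechanisms you offer fail on examples. In the first variant, the fibration of $\mathcal{G}/\mathcal{K}$ over a product of $d$ two-spheres, with even-degree classes to which Leray--Hirsch applies, does not exist in general: for $\mathcal{G}/\mathcal{K}=\U_4/\Sp_4$ one has $d=2$, yet by Proposition \ref{MT} its cohomology is $\Lambda(e_1,e_5)$, so there is no cohomology in degree $2$ at all; in the paper's examples the cohomology is an exterior algebra on odd-degree generators (with at most one Euler class), never a tensor factor of $d$ copies of $H^*(S^2,\C)$. You also never exhibit an equal-rank homogeneous space whose total Betti number is $|W_{\mathcal{G}}^\theta/\iota(W_{\mathcal{K}})|$; producing that factor is the whole content. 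In the second variant, the starting identity $\sum_i\dim H^i(\g,\mathcal{K},\C)=\dim\bigl(\Lambda^\bullet\mathfrak{p}^*\bigr)^{\mathcal{K}}$ is fine, but the asserted factorization of the invariants into an equal-rank ``Kostant factor'' times $2^d$ is unsupported: the split directions you single out (the split part of the fundamental torus) span only $\mathfrak{s}\cap\mathfrak{p}$, of dimension $d$, which is in general not a maximal abelian subspace of $\mathfrak{p}$ (for $\U_{2n}/\SO_{2n}$ one has $d=n$ while a maximal flat has dimension $2n$), so there is no restricted-root/reflection-group structure on a $d$-dimensional space of the kind you invoke; and for a reflection group $W$ acting on its reflection representation $V$ the invariants $(\Lambda V)^W$ are usually far smaller than $2^{\dim V}$ (already for $\Z/2$ acting by $-1$ on $\R$ they are one-dimensional), so the ``factor of $2$ per split coordinate'' does not follow from anything you quote. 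Finally, your finite-index reduction is also incorrect as written: $H^*(\g,\mathcal{K},\C)=H^*(\g,\mathcal{K}_0,\C)^{\mathcal{K}/\mathcal{K}_0}$ is an invariant subspace, and taking invariants under a finite group does not divide the total dimension by the order of the group; that the ratio nevertheless matches the change in $|\iota(W_{\mathcal{K}})|$ (compare $\U_{2n}/\SO_{2n}$ with $\U_{2n}/\OO_{2n}$, where the Euler class is anti-invariant) is itself part of what must be proved.

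For contrast, the paper's argument is of a different nature and handles disconnected $\mathcal{K}$ with no reduction to $\mathcal{K}_0$: since $H^i(\g,\mathcal{K},\C)=\Hom_{\mathcal{K}}(\Lambda^i\mathfrak{p},\C)$ and $\theta$ acts by $-1$ on $\mathfrak{p}$, the involution acts by $(-1)^i$ on $H^i(\mathcal{G}/\mathcal{K},\C)$, so the total Betti number equals the Lefschetz number of $\theta$, hence of the homotopic map $\theta_t(g\mathcal{K})=t\theta(g)\mathcal{K}$ for $t$ a topological generator of $\mathcal{T}$. Its fixed locus is $\bigl(N_{\mathcal{G}}(\mathcal{T})/N_{\mathcal{K}}(\mathcal{T})\bigr)^{\theta}$, which is counted to be $2^d\cdot\left|W_{\mathcal{G}}^\theta/\iota(W_{\mathcal{K}})\right|$ using the vanishing of $H^1(\langle\theta\rangle,S/\mathcal{T})$ — here $2^d$ arises as $|(S/\mathcal{T})^\theta|$, the $2$-torsion of a $d$-dimensional torus inverted by $\theta$, not as a contribution of two-spheres — and a local computation shows every fixed point has index $+1$. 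If you wish to salvage your invariant-theoretic route, you would have to prove the product formula for $\bigl(\Lambda\mathfrak{p}^*\bigr)^{\mathcal{K}}$ directly, which in effect amounts to redoing this fixed-point count.
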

 \begin{proof} 
As is well known,
$H^i({\mathcal G}/ {\mathcal K}, \C) \cong  H^i(\g , {\mathcal K}, \C)$, proving the first equality in the assertion of the theorem. We will calculate
$\sum_i \dim H^i({\mathcal G}/ {\mathcal K}, \C)$  using the Lefschetz fixed point formula as in the classic topological proof of conjugacy of maximal tori in compact groups by Hopf-Samelson and Weil.

   The (global) Cartan involution  $\theta$ gives a self-map on $X = {\mathcal G}/{\mathcal K}$.  The decomposition $\g=\mathfrak{k}+\mathfrak{p}$ (into the $\theta$-eigenspaces on the complexified Lie algebra $\g$ of ${\mathcal G}$) and  the formula $H^i(\g,{\mathcal K},\C) = \Hom_{\mathcal K}(\wedge^i \mathfrak{p},\C)$ shows that the action $\theta$ on $H^*({X}, \C)$ is the identity
   in even degrees and $-1$ in odd degrees, so that the Lefschetz number of $\theta$ is the sum
   $\sum_i \dim_\C H^i({X}, \C)$ we want to compute.  
      For any $t \in {\mathcal T}$, the self-map $\theta_t$ of ${X}$ given by
      $\theta_t(g{\mathcal K}) := \theta(tg{\mathcal K}) = t\theta(g{\mathcal K})$ is homotopic to $\theta$ and hence has the same Lefschetz number as $\theta$.  

   Choose $t \in {\mathcal T}$ so that $\overline{\langle t\rangle}={\mathcal T}$, i.e. $t$ generates ${\mathcal T}$ topologically.   Let $F={X}^{\mathcal T}$ be the fixed-point locus of ${\mathcal T}$ or, equivalently, the fixed-point locus of  multiplication by $t$ on $X={\mathcal G}/{\mathcal K}$.  We claim that the fixed point set of $\theta_t$ is $F^\theta$.   For this, observe first that if $\theta_t(g{\mathcal K}) = g{\mathcal K}$, i.e., $x=g^{-1}t\theta(g) \in {\mathcal K}$, 
then $x \theta(x) = g^{-1}t^2 g$ also belongs to ${\mathcal K}$, and thus $g^{-1}{\mathcal T}g \subset {\mathcal K}$, i.e., $g{\mathcal K} \in ({\mathcal G}/{\mathcal K})^{\mathcal T}=F$, therefore $g{\mathcal K} \in ({\mathcal G}/{\mathcal K})^{\mathcal T}$ which being $\theta_t$ invariant,  belongs to  $F^\theta$.

We need to understand $F^\theta$, for which we begin by understanding $F = X^{\mathcal T}$. Clearly a point $x = g{\mathcal K}  \in {\mathcal G}/{\mathcal K}$ is ${\mathcal T}$ invariant if and only if $g^{-1}{\mathcal T}g \subset {\mathcal K}$, which then is a maximal torus in ${\mathcal K}$, and hence by conjugacy of maximal tori in ${\mathcal K}$, we have: $g^{-1}{\mathcal T}g = k{\mathcal T}k^{-1}$, i.e., $gk \in N_{\mathcal G}({\mathcal T})$. Thus,  $$F = X^{\mathcal T} = [N_{\mathcal G}({\mathcal T})\cdot {\mathcal K}]/{\mathcal K} =
N_{\mathcal G}({\mathcal T})/N_{\mathcal K}({\mathcal T}).$$

Next, observe that $S=Z_{\mathcal G}({\mathcal T}) \subset N_{\mathcal G}({\mathcal T})$, and further,  $ N_{\mathcal G}({\mathcal T}) \subset  N_{\mathcal G}( Z_{\mathcal G}({\mathcal T})) =  N_{\mathcal G}(S)$, thus $N_{\mathcal G}({\mathcal T})$
is that subgroup of $N_{\mathcal G}(S)$ which under conjugation takes the subgroup ${\mathcal T}$ into itself. Since ${\mathcal T}$ is the connected
component of identity of $S^\theta$, it follows that   $N_{\mathcal G}({\mathcal T})$ is the  inverse image of   $W_{\mathcal G}^\theta$ under the quotient map
$N_{\mathcal G}(S) \rightarrow   W_{\mathcal G}$. Further, there is a canonical $\iota: W_{\mathcal K} \rightarrow W_{\mathcal G}$ landing inside $W_{\mathcal G}^\theta$ which fails to be an  injection precisely when $Z_{\mathcal K}({\mathcal T})  $ is strictly larger than ${\mathcal T}$, a possibility only for disconnected ${\mathcal K}$.

We now need to calculate the fixed points of $\theta$ on $F$,  which we have realized as a quotient of two groups. In general, computing the fixed points on a quotients of two groups
tends to be a difficult question, but it is easy enough in this case. For this we first observe the following lemma:

\begin{lemma}
  Suppose $E$ is a set with a fixed point free action of a group $A$, and a commuting action of
  an involutive  automorphism $\theta$ on $E$ and on $A$. Assume that $B$ is the quotient of
  $E$ by $A$ with the quotient map $\pi: E\rightarrow B$. Assume that
  $H^1(\langle \theta \rangle, A) = 1$. Then $A^\theta$ has a fixed point free action on $E^\theta$ with quotient $B^\theta$. In particular, if $E^\theta$ is a finite set,
  then $$|E^\theta|  = |A^\theta| \cdot |B^\theta|.$$ 
\end{lemma}
\begin{proof} The main step is to prove that the induced map  from $E^\theta$ to  $B^\theta$ is surjective. Suppose $\pi(e) \in B^\theta$. Therefore, by the $\theta$-invariance
  of the map $\pi: E\rightarrow B$,
  $$\theta(e) = a\cdot e,$$
  for some $a\in A$. Since $\theta^2=1$, and since $A$ operates on $E$ without fixed points, $a\cdot \theta(a)=1$. Therefore as  $H^1(\langle \theta \rangle, A) = 1$,
  $a = \alpha^{-1} \cdot \theta(\alpha)$ for some $\alpha \in A$. Now it is easy to see that $\theta(\alpha) \cdot e$ is $\theta$ invariant, proving that the induced
  map  from $E^\theta$ to  $B^\theta$ is surjective, whose fibers are easily seen to be $A^\theta$. \end{proof}

This lemma will be applied, in the notation of the present theorem to $E=  N_{\mathcal G}({\mathcal T})/N_{\mathcal K}({\mathcal T})$, and $A= S/S^\theta$ with $A$ operating on $E$ by natural multiplication on the left (the action is easily seen to  be fixed point free), and $\theta = \theta_t$.
First of all, since $\theta$ operates on the compact torus $S/S^\theta$ by $t\mapsto t^{-1}$,   $H^1(\langle \theta \rangle,  S/S^\theta) = 1$,
and  $\left|(S/S^\theta)^\theta\right| = 2^d$. By the previous lemma,
$$
\left|\left(
\frac{N_{\mathcal G}({\mathcal T})}
{N_{\mathcal K}({\mathcal T})}
\right)^\theta\right|=
2^d \cdot 
\left| \left(
\frac{N_{\mathcal G}({\mathcal T})}
{S\cdot N_{\mathcal K}({\mathcal T})} 
\right)^\theta\right|.
$$
We know that the action of $\theta$ on $ N_{\mathcal G}({\mathcal T})/\left(S\cdot N_{\mathcal K}({\mathcal T})\right)$ is trivial.  The exact sequence 
$$1 \rightarrow     \frac{ N_{\mathcal K}({\mathcal T})}{S \cap N_{\mathcal K}({\mathcal T})}  =
\frac{S\cdot N_{\mathcal K}({\mathcal T})}{S}  \rightarrow   \frac{  N_{\mathcal G}({\mathcal T})}{S}  \rightarrow
\frac{  N_{\mathcal G}({\mathcal T})}{S\cdot N_{\mathcal K}({\mathcal T})  }  \rightarrow 1,$$
shows that 
$$ \left |\frac{  N_{\mathcal G}({\mathcal T})}{S\cdot N_{\mathcal K}({\mathcal T})  } \right | = 
\left|\frac{W_{\mathcal G}^\theta}{\iota(W_{\mathcal K})}\right|.$$
 Thus we find that $F^\theta$ is a finite set of cardinality
$$2^d \cdot  \left |W_{\mathcal G}^\theta /\iota(W_{\mathcal K}) \right |. $$
Note that since $F^\theta = (X^{\mathcal T})^\theta =  X^{{\mathcal T} \rtimes \langle \theta\rangle}$ is the fixed-point locus of a compact group acting on a smooth manifold, the action can be linearized at each fixed point by the well-known Bochner linearization theorem (see e.g. page 96 of \cite{DK}).
It follows that the fixed point locus consists of a finite set of isolated fixed points in the differentiable sense also, i.e. it is a closed submanifold of dimension zero.  To conclude the proof of the theorem, we must prove that
the Lefschetz number of the diffeomorphism $\theta_t$ on $X = {\mathcal G}/{\mathcal K}$ at each of its fixed points has
the same sign, in our case positive, and this can be computed by looking at the tangent space.

Let $\omega_0$ be an isolated fixed point of the action of $\theta_t$ on $X = {\mathcal G}/{\mathcal K}$ discussed above. We will translate the fixed point $\omega_0$ to identity using the diffeomorphisms
$L_g:  {\mathcal G}/{\mathcal K} \rightarrow {\mathcal G}/{\mathcal K}$ defined by $L_g(x {\mathcal K} ) = gx {\mathcal K}$.
Thus, consider the diffeomorphism $L_{\omega_0^{-1}} \circ \theta_t \circ L_{\omega_0}$  on ${\mathcal G}/{\mathcal K}$:
  $$L_{\omega_0^{-1}} \circ \theta_t \circ L_{\omega_0}: x {\mathcal K} \rightarrow  \omega_0^{-1} t \theta(\omega_0) \theta(x)  {\mathcal K}.$$
    Since $\omega_0$ is a  fixed point of the action of $\theta_t$ on $ {\mathcal G}/{\mathcal K}$,
    $$\omega_0^{-1} t \theta(\omega_0) = k_0 \in K.$$
    Therefore,     the diffeomorphism $L_{\omega_0^{-1}} \circ \theta_t \circ L_{\omega_0}$  on ${\mathcal G}/{\mathcal K}$ is the descent to  ${\mathcal G}/{\mathcal K}$
      of the automorphism
      $$g \mapsto k_0 \theta(g) k_0^{-1},$$
      of ${\mathcal G}$.

      The Lefschetz number of the diffeomorphism $\theta_t$ of $X = {\mathcal G}/{\mathcal K}$ at $\omega_0$ is the same as
      the Lefschetz number of the diffeomorphism $L_{\omega_0^{-1}} \circ \theta_t \circ L_{\omega_0}$  of $X = {\mathcal G}/{\mathcal K}$ (taking the identity element to the identity element)
      at the identity element, which is what we will calculate now.
      
      The automorphism $ \varphi_{\omega_0}= L_{\omega_0^{-1}} \circ \theta_t \circ L_{\omega_0}:  g \mapsto k_0 \theta(g) k_0^{-1} $ of  ${\mathcal G}$
preserves $\p$ which can be treated as the tangent space of ${\mathcal G}/{\mathcal K}$ at the identity coset $e{\mathcal K}$. 
Since $k_0 \in {\mathcal K}$ belongs to a compact group,  $\p$ decomposes as a sum of eigenspaces for the action of $k_0$,
$$ \p = \p_1 + \p_{-1} + \sum_{\alpha} (\p_\alpha + \p_{\bar{\alpha}}),$$
where $\alpha \in \Si^1$ with $\alpha \not = \pm 1$, and ${\bar{\alpha}}$ is the complex conjugate of $\alpha$ appearing with the same
multiplicity as $\alpha$ (since the action of ${\mathcal K}$, in particular of $k_0$, is on a real vector space underlying $\p$). Further, since
$\theta$ operates by $-1$ on $\p$,
and the automorphism $ \varphi_{\omega_0}:  g \mapsto k_0 \theta(g) k_0^{-1} $ has isolated fixed point at the identity element  of ${\mathcal G}/{\mathcal K}$,
$\p_{-1}=0$.
Using again that $\theta$ operates by $-1$ on $\p$,
it follows that for the action of $(1 -\varphi_{\omega_0})$ on $\p$,
$$\det(1 -\varphi_{\omega_0}) = 2^{e} \cdot \prod_{\alpha \not = \pm 1}(1+\alpha)(1+\overline{\alpha}) > 0,$$
(where $e$ is the dimension of $\p_1$) completing the proof of the theorem.
 \end{proof}

 \begin{remark}
 The cohomology of compact symmetric spaces  ${\mathcal G}/{\mathcal K}$ (in the notation of Theorem \ref{symmetric}) is very well known and well studied when the rank of the group ${\mathcal G}$ is the same as that of
 $ {\mathcal K}$, such as in Borel's thesis. However, precise results on cohomology of a compact symmetric space  ${\mathcal G}/{\mathcal K}$ in the unequal rank case seem less considered. Here is the result for
$ {\mathcal G} = \U_n$ that we have been able to locate (still very classic). Presumably there are similar precise results on cohomology of other compact symmetric spaces  ${\mathcal G}/{\mathcal K}$ in the unequal rank case, we do not know.
 \end{remark}

 \begin{prop} \label{MT}
\begin{enumerate}
\item
  $H^*([\U_{n} \times \U_{n}] / \Delta (\U_{n}), \C)  \cong H^*(\U_{n}, \C) \cong \Lambda(e_1, e_3, \cdots, e_{2n-1}),$
   where $\Lambda(e_1, e_3, \cdots, e_{2n-1})$ denotes the exterior algebra on odd degree generators $e_{2i-1}$.

\item   $H^*(\gl_{2n+1}(\C) , \SO_{2n+1}, \C) \cong H^*(\U_{2n+1}  / \SO_{2n+1}, \C)  \cong
   \Lambda(e_1, e_5, \cdots, e_{4n+1}),$

   \item
   $H^*(\gl_{2n}(\C) , \SO_{2n}, \C) \cong H^*(\U_{2n}  / \SO_{2n}, \C)  \cong
\Lambda(e_1, e_5, \cdots, e_{4n-3}) \otimes \Delta(e_{2n}),$
where $ \Delta(e_{2n})$ represents the two dimensional algebra generated by $e_{2n}$ in degree $2n$, the so-called Euler class. 
   \item
   $H^*(\gl_{2n}(\C) , \OO_{2n}, \C) \cong H^*(\U_{2n}  / \OO_{2n}, \C)  \cong
\Lambda(e_1, e_5, \cdots, e_{4n-3}).$
\item $H^*(\gl_{2n}(\C) , \Sp_{2n}, \C) \cong H^*(\U_{2n}  / \Sp_{2n}, \C)  \cong
\Lambda(e_1, e_5, \cdots, e_{4n-3}).$

\end {enumerate}

   \end{prop}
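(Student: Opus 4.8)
\textbf{Plan of proof of Proposition \ref{MT}.}
The statements are all of the form $H^*(\mathcal{G}/\mathcal{K},\C)$ for a compact symmetric space, so the natural route is to invoke the classical Cartan--Weil theory of cohomology of compact homogeneous spaces, reduced in most cases to computations already present in the classical literature (Borel, Cartan, Mimura--Toda). For (1), the space $[\U_n\times\U_n]/\Delta(\U_n)$ is diffeomorphic to $\U_n$ itself (via $(g,h)\mapsto gh^{-1}$, descending to an equivariant diffeomorphism), so its real cohomology is the exterior algebra $H^*(\U_n,\C)=\Lambda(e_1,e_3,\dots,e_{2n-1})$ on primitive generators in degrees $1,3,\dots,2n-1$, which is a textbook fact (e.g.\ from the bar spectral sequence or from $\U_n$ being a product of odd spheres rationally). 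For (2), (4), (5) the spaces $\U_{2n+1}/\SO_{2n+1}$, $\U_{2n}/\OO_{2n}$, $\U_{2n}/\Sp_{2n}$ are the classical compact symmetric spaces $AI$ and $AII$; their rational cohomology is the exterior algebra on generators in degrees $5,9,\dots$ (for $\SO$) respectively $5,9,\dots$ with a degree-$1$ generator adjoined, and these are exactly the computations of Cartan. One can either cite these directly, or re-derive them using the fibration $\SO_m\to\U_m\to\U_m/\SO_m$ (resp.\ $\Sp_m\to\U_{2m}\to\U_{2m}/\Sp_m$) together with the known cohomology of the classical groups and the behaviour of primitive generators under the maps induced by inclusion.

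For (3), the space $\U_{2n}/\SO_{2n}$ is the symmetric space $AI$ in even rank, which is the one case of genuine unequal rank where an ``Euler class'' appears. Here I would argue via the double covering $\U_{2n}/\SO_{2n}$ by $\U_{2n}/\mathrm{S}\OO_{2n}$ is not quite what is needed; rather, the key point is the fibration $\SO_{2n}\hookrightarrow \OO_{2n}$ has two components, so $\U_{2n}/\SO_{2n}\to \U_{2n}/\OO_{2n}$ is a double cover, and the extra degree-$2n$ generator $e_{2n}$ (the Euler class of the canonical oriented bundle) accounts for $H^*(\U_{2n}/\SO_{2n})$ being a free module of rank $2$ over $H^*(\U_{2n}/\OO_{2n})$. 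This is consistent with Theorem \ref{symmetric}: for $\mathcal{G}=\U_{2n}$, $\mathcal{K}=\SO_{2n}$ one has rank defect $d=n$ and the quotient $W_{\mathcal{G}}^\theta/\iota(W_{\mathcal{K}})$ has the appropriate order, so that $\sum_i\dim H^i = 2^d\cdot|W_{\mathcal{G}}^\theta/\iota(W_{\mathcal{K}})| = 2^{2n-1}$, which matches $\dim \Lambda(e_1,e_5,\dots,e_{4n-3})\otimes\Delta(e_{2n}) = 2^{n-1}\cdot 2 = 2^n$ times $\dots$ --- wait, this needs the precise count, so in the write-up I would verify the Poincar\'e polynomial against Theorem \ref{symmetric} as a consistency check rather than as the main argument.

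The honest approach, and the one I would actually carry out, is: (i) establish (1) by the diffeomorphism with $\U_n$; (ii) for (2),(4),(5) use the standard fact that for these symmetric spaces of type $AI$, $AII$ the rational cohomology is the exterior algebra generated by the transgressions of the primitive classes of $\U_m$ that survive, which one reads off from the involution $\theta$ acting on the primitive generators $e_1,e_3,\dots,e_{2m-1}$ of $H^*(\U_m)$ --- $\theta$ fixes $e_1,e_5,e_9,\dots$ and negates $e_3,e_7,\dots$ in the $\SO$/$\OO$ cases (giving survivors in degrees $1,5,9,\dots$), and in the $\Sp$ case one gets the same list --- together with Cartan's theorem that for a symmetric space $\mathcal{G}/\mathcal{K}$ the cohomology is the exterior algebra on these survivors when $\mathcal{G}$ is a classical group of type $A$; (iii) for (3), carry out the same analysis but note the additional invariant generated in degree $2n$ coming from the Euler class, which is the standard phenomenon distinguishing $\SO$ from $\OO$. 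The main obstacle is (3): one must correctly identify the Euler class contribution and check that the ring structure is the tensor product $\Lambda(e_1,e_5,\dots,e_{4n-3})\otimes\Delta(e_{2n})$ rather than something with a nontrivial relation $e_{2n}^2 = (\text{lower})$; this requires knowing that $e_{2n}^2$ is proportional to a Pontryagin-type class which vanishes in $H^*(\U_{2n}/\SO_{2n})$ for dimension/degree reasons, or equivalently citing the known computation (Mimura--Toda, or Greub--Halperin--Vanstone) that $H^*(\U_{2n}/\SO_{2n};\C)$ has exactly this structure. I expect to cite this last fact rather than reprove it.
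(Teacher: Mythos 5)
Your proposal is correct and follows essentially the same route as the paper: part (1) is the standard computation for a compact group, parts (2), (3), (5) are taken from the classical literature (the paper cites Mimura--Toda III.6, exactly the source you fall back on), and the $\SO_{2n}$ versus $\OO_{2n}$ discrepancy is handled by the same key observation, namely that the deck transformation $\OO_{2n}/\SO_{2n}$ of the double cover acts by $-1$ on the Euler class $e_{2n}$, so that (3) and (4) differ precisely by the factor $\Delta(e_{2n})$ (the paper deduces (4) from (3) by taking invariants; you phrase it in the opposite direction, which is the same fact). The only blemish is the numerical aside comparing with Theorem \ref{symmetric}, where your count $2^{2n-1}$ is off (the correct total for $\U_{2n}/\SO_{2n}$ is $2^{n+1}$, with $d=n$ and index $2$), but since you explicitly use it only as a consistency check and not as part of the argument, this does not affect the proof.
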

 \begin{proof}
   The first part  on $\GL_{n}(\C)$, i.e.,
   $H^*(\U_{n}, \C) \cong \Lambda(e_1, e_3, \cdots, e_{2n-1}),$ is a well-known computation on cohomology of compact Lie groups.   For the cohomology of $\U_n/\SO_n$, and $\U_{2n}/\Sp_{2n}$, we refer to \cite[III.6]{MT}.  To deduce (4) from (3) note that the Euler class $e_{2n}$ is the $n$th Chern class of the tautological bundle on $\U_{2n}/\SO_{2n}$ and the action of $\OO_{2n}/\SO_{2n}$ on it reverses the orientation of the tautological bundle, so that it acts by $-1$ on $e_{2n}$. 
   \end{proof}

\begin{remark}
Theorem \ref{symmetric} and Proposition \ref{MT} suggest that (in the notation of Theorem \ref{symmetric}) the cohomology of ${\mathcal G}/{\mathcal K}$ carries an action of an exterior algebra of dimension $d = \dim(S)-\dim({\mathcal T})$, not necessarily generated in degree one.

\end{remark}

\begin{example} 
The assertion of Theorem \ref{packetsum} can be checked  directly in the case of $\sG=\GL(N)$, using the classical facts summarized in Proposition \ref{MT}.

The $\theta$-stable parabolic subalgebras for $\mathfrak{gl}(N,\R)$ modulo $K$-conjugacy are in bijection with self-dual ordered partitions  of $N$ (cf. Example \ref{spehq}).  The partition 
$$
(n_d, \dots, n_1, n_0, n_1, \dots, n_d)
$$ 
with $n_0+\sum_{i\geq 1} 2n_i = N$ gives a $\theta$-stable parabolic subalgebra $\q$ with Levi 
$$
L \cong \GL(n_0, \R) \times \prod_{i\geq 1} \GL(n_i,\C). 
$$   
The cohomology $H^*(\g,K, A_\q)$ is, up to a degree shift, 
$$
H^*(\l, K\cap L, \C) = 
H^*(\mathfrak{gl}(n_0, \R), \OO_{n_0}, \C) \otimes  
\bigotimes_{i\geq 1}
H^*(\mathfrak{gl}(n_i, \C), \U_{n_i}, \C) 
$$ 
(e.g. by the computation in Proposition~\ref{cohprop}). 
By the previous proposition the $n_0$-term is an exterior algebra on $\lfloor n_0/2\rfloor$ generators (there is one generator for each odd integer $\leq n_0$).  For $i\geq 1$ the $n_i$-term is an exterior algebra on $n_i$ generators (there is one generator for each odd integer $\leq 2n_i$).  
Since $n_0$ and $N$ have the same parity, we see that for any self-dual ordered partition $H^*(\l,K\cap L, \C)$ is an exterior algebra on $\lfloor N/2\rfloor$ generators.  (The precise degrees of the generators depend on the partition.)   Thus $\sum_{i\geq 0}\dim_\C H^i(\l,K \cap L,\C)= 2^{\lfloor N/2\rfloor}$ is independent of the chosen partition.  Since the $AJ$-packets for $\GL(N,\R)$ are singletons, this gives the assertion of Theorem \ref{packetsum} for $\GL(N)$ for $(\g,K)$-cohomology. 

Now consider $(\g,K_0)$-cohomology.   
For $\q$ associated with $(n_d, \dots, n_0, \cdots, n_d)$ the cohomology $H^*(\g,K_0, A_\q)$ is, up to a degree shift, $H^*(\l, K_0\cap L, \C)$ (in this case we are in the situation of Theorem 3.3. of \cite{VZ}), and this is given by 
$$
H^*(\l, K\cap L, \C) = 
H^*(\mathfrak{gl}(n_0, \R), \SO_{n_0}, \C) \otimes  
\bigotimes_{i\geq 1}
H^*(\mathfrak{gl}(n_i, \C), \U_{n_i}, \C). 
$$ 
Since $n_0$ and $N$ have the same parity the case of odd $N$ is  exactly the same as before, i.e. $H^*(\mathfrak{l},K_0\cap L, \C)$ is an exterior algebra on $\lfloor N/2\rfloor$ generators for any $\sL$, so let us assume from now that $N$ is even.  
Since $n_0$ is even, by the previous proposition the $n_0$-factor is an exterior algebra on $n_0/2$ generators in odd degrees, plus one more generator, the Euler class $e_{2n_0}$ in degree $n_0$.  So  $H^*(\l, K_0\cap L,\C)$ is an exterior algebra on $N/2 +1$ generators.  If $n_0=0$ it is an exterior algebra on $N/2$ generators by the proposition, but we must now take into account the fact that the restriction to $(\g,K_0)$ of the $A_\q$ for $\GL(N,\R)$ corresponding to the partition has two summands, both of which are cohomological for $(\g,K_0)$ (indeed, they constitute the tempered cohomological $L$-packet for $(\g,K_0)$).  Summing the contribution from these two representations gives $2^{N/2}+2^{N/2}=2^{(N/2) +1}$.  

\end{example}

Finally, we give a refinement of Theorem~\ref{packetsum} using Hodge polynomials in the case $G/K$ has an invariant complex structure.   Let us assume for simplicity that $\sG$ is semisimple, $G=\sG(\R)$ is connected, and $K \subset G$ is a maximal compact, and that $G/K$ has a $G$-invariant complex structure.    We restrict ourselves to trivial coefficients although the general case is easily dealt with.

For a nonnegatively bigraded vector space 
$V=\bigoplus_{p,q\geq 0} V^{p,q}$,  define 
$$
P_V(u,v) = \sum_{p,q} (\dim V^{p,q})\, u^pv^q \quad \in \Z[u,v].
$$
If $H$ is a real Hodge structure then this is the Hodge polynomial and 
Hodge symmetry implies that $P_H(u,v)=P_H(v,u)$. 
For a real Hodge structure $H$ which additionally satisfies a duality $\dim H^{p,q} = \dim H^{n-q,n-p}$, we define the {\it dual} or {\it mirror} Hodge polynomial by 
\begin{equation}
P^\#_H(u,v) := \sum_{p,q} (\dim H^{n-p,q})\,u^p v^q = u^nP_H(u^{-1},v). 
\label{mirrorpoly}
\end{equation}
For example, if $H=H^*(X)$ is the total cohomology of a compact K\"ahler manifold $X$ of dimension $n$, then this is the Hodge polynomial of a mirror manifold of $X$, if one exists, since the Hodge diamond is reflected across the line $p=n/2$.  

For a unitary cohomological representation $A_\q=\mathcal{R}^S_\q(\C)$, where $\q=\l+\u$, there is a bigrading in $H^*(\g,K,A_\q)$ coming from $\g/\k=\p=\p^+\oplus \p^-$ given by the complex structure. 
We then have  
$$
P_{H^*(\g,K,A_\q)}(u,v) = u^{R^+}v^{R^-}\,P_{H^*(\widehat{X}_L)}(u,v) 
$$
where $R^\pm = \dim(\u\cap \p^\pm)$ (see \cite[Prop.~6.19]{VZ}). 
This leads us to define 
$$
P^\#_{H^*(\g,K,A_\q)}(u,v) := u^{R^+}v^{R^-}\,P^\#_{H^*(\widehat{X}_L)}(u,v). 
$$
(Note that $H^*(\g,K,A_\q)$ is not a real Hodge structure except in certain special cases, so that $P^\#_{H^*(\g,K,A_\q)}$ is not already defined by (\ref{mirrorpoly}) in general.  In the cases where $R^+=R^-$ it so happens that 
$H^*(\g,K,A_\q)$ is a real Hodge structure, and there is potentially another meaning for $P^\#_{H^*(\g,K,A_\q)}$, which is {\it not} the one we are taking.) 
If $\pi$ is a discrete series representation then $P_{H^*(\g,K,\pi)}=P^\#_{H^*(\g,K,\pi)}=u^pv^q$  where $p,q$ with $p+q=\dim_\C G/K$ is the unique bidegree with $H^{p,q}(\g,K,\pi)\neq \{0\}$. 

With these definitions we have the following, which reduces to Theorem~\ref{packetsum}  (for $G/K$ with an invariant complex structure) on setting $u=v=1$.  The special case of the discrete series packet appears as 
\cite[Corollary 8.2]{Gr}.

\begin{thm} \label{mirror}
Suppose that $G$ is connected and $G/K$ has a $G$-invariant complex structure. 
Then for an Adams-Johnson packet $\Pi$ of $(\g,K)$-cohomological representations, the sum 
$$
\sum_{\pi \in \Pi} P^\#_{H^*(\g,K,\pi)}(u,v)  
$$ 
is independent of the packet, and hence is given by 
$$
P^\#_{H^*(G^u/K)}(u,v)=\sum_{0\leq i\leq 2n} \dim H^i(G^u/K) \,u^{n-i/2}v^{i/2} 
$$
where $n = \dim_\C G/K$. 
\end{thm} 

\begin{proof}
This follows by the same proof as was given for Theorem~\ref{packetsum} in the special case of $G$ having discrete series by substituting \cite[Proposition~6.19]{VZ} for \cite[Theorem~5.5]{VZ} and minding ones $u^p$s and $v^q$s.  We leave the details to the reader. 
\end{proof}

\begin{example}
As an example of the previous theorem, consider the case $\sG=\Sp(4)$.  There are eight representations with cohomology with trivial coefficients, which we can index by the bidegree in which their minimal-degree cohomology appears: 
$$
\pi^{30}, \pi^{21}, \pi^{12}, \pi^{03} 
\qquad 
\pi^{20}, \pi^{11}, \pi^{02} 
\qquad 
\pi^{00}=\C.
$$  
The nontrivial $AJ$-packets are 
$\{\pi^{30}, \pi^{21}, \pi^{12}, \pi^{03}\}$ (the discrete series $L$-packet), $\{\pi^{20},\pi^{02}\}$ (also an $L$-packet), and $\{\pi^{30},\pi^{11},\pi^{03}\}$ (the packet built on the $L$-packet $\{\pi^{11}\}$). 
The sum of mirror Hodge polynomials is 
$$
u^3+u^2v+uv^2+v^3=u^2(u+v)+(u+v)v^2=u^3+uv(u+v)+v^3, 
$$ 
with each expression corresponding to one of these packets. 
\end{example}

\begin{example} \label{Un1mirror} 
For $\U(n,1)$, Theorem \ref{mirror} justifies the picture of $AJ$-packets described in Example~\ref{Un1example} in Section~\ref{AJexam}.  We see that an $AJ$-packet for $\U(n,1)$ contains a discrete series representation if and only if the corresponding partition of $N=n+1$ contains the factor $1$.  Indeed, these facts follow immediately by considering of the mirror Hodge polynomial.

\end{example}

\begin{remark}
For groups $G$ with quaternionic discrete series there should be an interesting statement in terms of $\SU(2)$-representations 
analogous to Theorem~\ref{mirror}. 
\end{remark}

\begin{remark}
There is a natural generalization of Theorem \ref{mirror} for general groups in terms of decompositions of the set of fundamental series representations which we will discuss elsewhere. 

\end{remark}

\textbf{Acknowledgements.} The authors thank C. Moeglin, D. Renard, O. Ta\"ibi and Sandeep Varma
for useful comments, and D. Vogan for his encouragement.
The authors thank the referee for a very meticulous and excellent job, catching many inaccuracies in the earlier version.
The first author thanks DAE, India,  for support through the grant 
PIC 12-R\&D-TFR-5.01-0500.
The second author  thanks  SERB, India for its support
through the JC Bose
Fellowship, JBR/2020/000006. 
His work was also supported by a grant of
the Government of the Russian Federation
for the state support of scientific research carried out
under the  agreement 14.W03.31.0030 dated 15.02.2018.

\vspace{.5cm}
\noindent

\noindent
AN: Tata Insititute of Fundamental Research,
Colaba,
Mumbai-5.

\noindent
Email: {\tt arvind@math.tifr.res.in}.
\vspace{.5cm}

\noindent DP:  Indian Institute of Technology Bombay, Mumbai. 

\noindent Email: {\tt prasad.dipendra@gmail.com}
\end{document}